\documentclass[12pt,oneside,reqno]{amsart}
\usepackage{txfonts}
\usepackage{bbm}
\usepackage{amsmath}
\usepackage{graphicx}
\usepackage{mathrsfs}
\usepackage{stmaryrd}
\usepackage{enumerate,amsmath,amssymb,amsthm}
\numberwithin{equation}{section}
\usepackage{amsthm}
\usepackage{amssymb,amsfonts,mathrsfs}
\usepackage{enumerate}
\usepackage{color}
\usepackage{xcolor}
\usepackage[colorlinks=true]{hyperref}
\hypersetup{
	linkcolor=blue,          
	citecolor=red,        
	filecolor=blue,      
	urlcolor=cyan
}

\theoremstyle{definition}
\newtheorem{theorem}{Theorem}[section]
\newtheorem{lemma}[theorem]{Lemma}%
\newtheorem{definition}[theorem]{Definition}%
\newtheorem{proposition}[theorem]{Proposition}%
\newtheorem{corollary}[theorem]{Corollary}
\newtheorem*{remark}{Remark}%
\def\mbb{\mathbb}
\def\mcl{\mathcal}
\def\mrm{\mathrm}
\def\msr{\mathscr}
\def\e{{\mathrm{e}}}
\def\d{\mathrm{d}}
\def\P{\mathbb{P}}
\def\E{\mathbb{E}}
\def\S{\mathbb{S}}

\allowdisplaybreaks
\begin{document}
	
	\title{Ergodicity for regime-switching neutral stochastic functional differential equations with infinite delay$^{\star}$}
	
	\date{}
	\author{Zuozheng Zhang$^{1}$, Fubao Xi$^{2,*}$}
	
	\thanks{$^{\star}$  Supported in part by the National Natural Science Foundation of China under Grant No. 12071031.}
	\thanks{$^{*}$ Corresponding author.}
	\thanks{E-mail addresses: zuozhengzhang@mail.bnu.edu.cn(Z. Zhang),  xifb@bit.edu.cn (F. Xi)}	
	
	\dedicatory{$^1$ School of Mathematics and Statistics, Wuhan University, Wuhan, Hubei 430072, P.R.China\\$^2$
		Institute for Mathematics and Interdisciplinary Sciences, Beijing Institute of Technology, Zhuhai, 519088, P.R.China}
	
	\begin{abstract}
		This work focuses on a class of regime-switching neutral stochastic functional differential equations (RNSFDEs) with infinite delay, in which the switching component can possess finite or countably infinite many states. To ensure the well-posedness of the underlying process, we first investigate the well-posedness for NSFDEs without Markovian switching under dissipativity conditions, and obtain the desired result by Skorohod's representation. By utilizing the moment estimate of exponential functionals of the switching component, we derive the exponential ergodicity in Wasserstein distance for RNSFDEs with a finite state space using the coupling method. To address the difficulty posed by the infinite state space, we obtain the same exponential ergodicity  by applying the finite partition method along with Lyapunov functions and $M$-matrix theory. \\
		{\it AMS Mathematics Subject Classification (2020) :} 60J27, 60J60, 34K50, 34K34	\\
		{\it Keywords: }neutral stochastic functional differential equations, Markovian switching, infinite delay, exponential ergodicity, Wasserstein distance.\\
	\end{abstract}
	\maketitle
	\rm
	\section{Introduction}
	In recent years, the ergodicity of stochastic functional differential equations (SFDEs) has attracted extensive attention, as the future state of such equations depends not only on the current state but also on the historical states of the system. It is well-known that the solutions to these processes no longer possess the Markov property; instead, the solution mapping (i.e., the segment process of solutions) forms a Markov process in the path space.  It is important to note that when the noise depends on the path, the transition probabilities of solution mappings starting from different initial values are always mutually singular. In particular, the corresponding Markov semigroup lacks the strong Feller property, and what is more, the only "small sets" in this system are singletons consisting of individual points. Effective tools for studying ergodicity, such as functional inequalities, Lyapunov-type criteria, Harris’ theorem, and coupling methods, are difficult to apply to these infinite-dimensional Markov processes. 
	Research on the ergodicity and related properties, including gradient estimates and Harnack-type inequalities, of SFDEs has seen the emergence of a series of new methodologies. Relevant advances can be found in \cite{bao2013derivative,BAO2019SPA,butkovsky2014subgeometric,OLEG2020AAP,butkovsky2017invariant,hairer2011asymptotic,wang2011harnack,wang2022stochastic}.

	However, the existing research still lacks systematic analysis of neutral stochastic functional differential equations (NSFDEs) with delay terms involving derivatives. In practical dynamic systems, the current state often depends on the rate of change of historical states, and neutral equations can accurately characterize this path dependence on the historical rate of change by introducing neutral terms containing delayed derivatives. Therefore, carrying out relevant research is of important theoretical significance and practical application value. In recent years, this field has made a series of advances in numerical analysis, stability, and ergodicity:  Wu and Mao \cite{wu2008numerical} rigorously proved the strong mean-square convergence of Euler–Maruyama numerical solutions for NSFDEs with delayed derivative terms under local Lipschitz conditions and linear growth assumptions, by developing analytical techniques adapted to the structure of neutral terms. Zong and Wu \cite{zong2016exponential} systematically established criteria for exponential stability in \(p\)th moment and almost sure exponential stability for both exact and numerical solutions of NSFDEs, using Lyapunov function methods combined with probabilistic tools such as the Chebyshev inequality and Borel–Cantelli lemma. Their work revealed the necessity of linear growth conditions for numerical methods to preserve stability and the advantages of the backward Euler–Maruyama strategy in handling non-uniform Lipschitz coefficients.  Bao et al. \cite{bao2020ergodicity} focused on NSFDEs with infinite delays, interpreting the weak Harris theorem through Wasserstein coupling methods and proving exponential ergodicity of functional solutions under the Wasserstein distance under appropriate dissipative conditions. This provides a new theoretical framework for analyzing the long-term asymptotic behavior of neutral systems with infinite delays. For more related research, see literature such as \cite{BaoYinYuanBook2016,boufoussi2012neutral,mahmudov2006existence,mao2006stochastic,zhu2025approximation}.

	Although SFDEs effectively describe time-delay phenomena in complex systems, they have limitations in characterizing scenarios where system operational mechanisms change significantly with internal and external conditions. Therefore, studying regime-switching SFDEs is of great significance. Currently, systematic research on the ergodicity of regime-switching SFDEs remains relatively limited. Below, we briefly summarize key recent advances:  Li and Xi \cite{li2021convergence} were the first to establish criteria for solution convergence, uniform boundedness, and exponential ergodicity under the Wasserstein distance for Markov-switching functional diffusion processes with infinite delays under dissipative conditions. Shi et al. \cite{shi2022ergodicity} further strengthened the conclusion of exponential ergodicity for the same model through improved coupling methods under appropriate conditions. Based on the uniform mixing properties of solution processes, they derived a strong law of large numbers for additive functionals, significantly expanding the theoretical analysis dimensions of the model.  Bao et al. \cite{BAO2023NONLINEAR} focused on functional diffusion processes with Markov switching, proving the existence of a unique invariant probability measure and that solution processes converge to the equilibrium state at an exponential rate under the Wasserstein distance.   Zhai and Xi \cite{zhai2024exponential} further extended the research to multiple classes of SFDEs with Markovian switching and finite delays, including classical SFDEs, neutral SFDEs, and Lévy-driven jump-diffusion processes. 
	
	In this paper, we inverstigate the ergodicity for a class of regime-switching neutral stochastic functional differential equations (RNSFDEs) with infinite delay. Let ${C\left((-\infty, 0] ;\mbb R^d\right)}$ denote the family of all continuous functions from $(-\infty, 0]$ to  $\mbb R^d$. For a given $r>0$, set
	\[
	C_r=\left\{\varphi\in C((-\infty, 0] ;\mbb R^d);\lim_{\theta\to-\infty }\e^{r\theta}\varphi(\theta) \text{ exists in }\mbb R^d\right\},
	\]
	which is a Banach space with norm $\|\varphi\|_r=\sup_{-\infty<\theta\leq0}\e^{r\theta}|\varphi(\theta)|$. Let $\textbf{0}$ be the zero element of the space $C_r$.  Let $\S = \{1, 2, \cdots , N\}$, $1 \leq  N \leq  \infty$, be the state space.
	Consider 
	\begin{equation}\label{eq:fangcheng1}
		\mathrm{d} \{X(t)-G(X_t,\Lambda(t))\}=b\left(X_t, \Lambda(t)\right) \mathrm{d} t+\sigma\left(X_t, \Lambda(t)\right) \mathrm{d} W(t), \quad t \geq 0,
	\end{equation}
	with initial data $(X_0,\Lambda(0))=(\xi,i) \in C_r\times\mbb S  $ , where $G$, $b: C_r \times \mbb S \rightarrow \mathbb{R}^d$ and  $\sigma: C_r \times \mbb S \rightarrow \mathbb{R}^{d\times d}$ are Borel measurable functions. $X_t(\theta):=X(t+\theta)$, $-\infty<\theta \leq 0$, is the segment process of $X(t)$ and $(W(t))$ is an $d$-dimensional Brownian motion on a complete filtration probability space $\left(\Omega, \mathscr{F},\left(\mathscr{F}_t\right)_{t \geq 0}, \mathbb{P}\right)$. The component $(\Lambda(t))$ is a right continuous stochastic process taking values in the state space $\mbb S$ with generator $Q=(q_{kl})_{N\times N}$ given by
	\begin{equation}\label{eq:fangcheng2}
		\mbb {P}\{\Lambda(t+\Delta)=l \mid \Lambda(t)=k \}= \begin{cases}q_{kl} \Delta+o(\Delta), & l \neq k, \\ 1+q_{kk} \Delta+o(\Delta), & l=k,\end{cases}
	\end{equation}
	provided $\Delta \downarrow 0$, where $q_{kl} \geq 0$ is the transition rate from $k$ to $l$, if $l \neq k$; while $q_{kk}=-\sum_{l \neq k} q_{kl}$.  We assume that the Brownian motion $(W(t))$  is independent of the Markov chain $(\Lambda(t))$. We further assume that the Markov chain $(\Lambda(t))$ is irreducible, which means that the system can switch from each regime to any other regime.
	
	In this paper, we focus on the RNSFDE $\left(X_t, \Lambda(t)\right)$ defined by \eqref{eq:fangcheng1} and \eqref{eq:fangcheng2}. The aim of this paper is to investigate the exponential ergodicity in Wasserstein distance of the underlying process by using the coupling method.  In this paper,  we first give the existence and uniqueness of solution maps for the RNSFDE  in Section \ref{sec:3}.  To this aim, for $p=2$, we study the well-posedness for  NSFDEs without Markovian switching Eq. \eqref{eq:NSFGE} under the dissipativity condition. Based on  this result, we derive the main result in this section by virtue of Skorokhod’s representation \eqref{Eq:1026:1} of the switching process $(\Lambda(t))$.

	In Section \ref{sec:4}, we consider the exponential ergodicity in Wasserstein distance for the RNSFDE in a finite state space. To this aim, we introduce an auxiliary process $Y^{\xi, i}(t):=X^{\xi, i}(t)-G(X_t^{\xi, i},\Lambda^{i}(t))$. Based on the esitmate  of the exponential functional of \( f(\Lambda(t)) \), we shall demonstrate the uniform boundedness of  $(Y(t))$ and establish the $L^2(\Omega;\mathbb{R}^{d})$-convergence of two processes $(Y^{\xi, i}(t))$ and $(Y^{\eta, i}(t))$ originating from different initial data. Thanks to these careful calculations, we extend these results to the solution map $(X_t)$. Finally, we use the coupling method to derive the exponential ergodicity in Wasserstein distance.
	
	In Section \ref{sec:wuqiong}, we proceed to study the exponential ergodicity for the RNSFDE in an infinite state space. When $\S$ is a infinite set, Lemma \ref{eq:wuwei} does not hold. Hence, we can not follow the method as in Sectin \ref{sec:4}. Here, we use the finite partition approach developed in \cite{SHAO2015SPA}. More precisely, the infinite state space $\S$ is divided into finite subsets, and a new Markov chain in a finite state space is constructed. With the aid of the new Markov chain, the uniform boundedness and the convergence for solution maps are also established by ultilizing the theory of Lyapunov functions and $M$-matrices. Following the argument as in Theorem \ref{thm:first}, we also obtain the exponential ergodicity in  the infinite state space.
	
	This paper is organized as follows. Section \ref{sec:2} introduces some necessary notations, notions and assumptions. 
	Section \ref{sec:3}  establishes the existence and uniqueness of solution maps for the RNSFDE. Section \ref{sec:4} is  devoted to establishing the uniform boundedness and the convergence for solution maps, and the exponential ergodicity for the RNSFDE in a finite state space. Section \ref{sec:wuqiong} proceeds to the study of  the RNSFDE in an infinite state and derives a similar exponential ergodicity result.
	
	Throughout the paper, C with or without indexes will denote different positive constants (depending on the indexes) whose values are not important. 
	\section{Preliminaries}\label{sec:2}
	We first introduce some notations throughout this paper.  $\mbb R^d$ stands for the $d$-dimensional Euclidean space. We will use $|\cdot|$ and $\langle\cdot,\cdot\rangle$ to denote the Euclidean norm and Euclidean inner product, respectively. We use $\|\cdot\|_{\rm HS}$ to denote the Hilbert-Schmidt norm of matrices. Write $\mathbb{R}^{+}=[0, \infty)$ and $\mathbb{R}^{-}=(-\infty, 0]$. For $a, b \in \mathbb{R}$, 
	\[
	a \wedge b=\min \{a, b\}\text{ and } a \vee b=\max \{a, b\}.
	\]
	Let $A$ be a vector or a matrix, we use $A^{\top}$ to denote its transpose. Let $\vec{0}=(0,\cdots,0)^{\top}$ and $\vec{1}=(1,\cdots,1)^{\top}$.  Suppose that all random objects are defined on a complete filtrated probability space $\left(\Omega, \mathscr{F}_{,}\left\{\mathscr{F}_t\right\}_{t \geq 0}, \mbb{P}\right)$.  
	
	To make our argument in the following more precise, we give an explicit construction of the complete filtrated probability space. Let
	\[
	\Omega_{1}=\left\{ \omega:[0, \infty) \rightarrow \mathbb{R}^{d} ;\omega \text { is continuous with } \omega(0)=0\right\},
	\]
	which is endowed with the locally uniform convergence topology and the Wiener measure ${\mathbb{P}_{1}}$ so that the coordinate process ${W(t, \omega):=\omega(t), t \geq 0}$, is a standard ${m}$-dimensional Brownian motion. Put
	\[
	{\Omega_{2}=\{\omega:[0, \infty) \rightarrow \mbb S; \omega \text{ is right continuous with left limit}\}}
	\] 
	endowed with Skorokhod topology and a probability measure ${\mbb P_{2}}$ so that the coordinate process
	$
	\Lambda(t, \omega):=\omega(t),  t \geq 0,
	$
	is a continuous time Markov chain with the generator ${Q=(q_{ij})_{N\times N}}$. Let
	\[
	(\Omega, \mathcal{F}, \left\{\mathscr{F}_t\right\}_{t \geq 0},\mathbb{P})=\left(\Omega_{1} \times \Omega_{2},  \mathscr{B}\left(\Omega_{1}\right) \otimes \mathscr{B}\left(\Omega_{2}\right), \left\{\mathscr{F}_t^{(1)}\otimes\mathscr{F}_t^{(2)}\right\}_{t \geq 0}, \mathbb{P}_{1} \times \mathbb{P}_{2}\right),
	\]
	where $\{\mathscr{F}_t^{(1)}\}_{t\geq 0}$ and $\{\mathscr{F}_t^{(2)}\}_{t\geq 0}$ are the natural filtration generated by $(W(t))$ and $(\Lambda(t))$ , respectively.	Hence, under ${\mathbb{P}=\mathbb{P}_{1} \times \mathbb{P}_{2}}$, for ${\omega=\left(\omega_{1}, \omega_{2}\right), t \mapsto \omega_{1}(t)}$ is a Wiener process, and ${t \mapsto \omega_{2}(t)}$ is a continuous-time Markov chain with the generator ${Q=(q_{ij})_{N\times N}}$. Moreover, the Wiener process is independent of the continuous-time Markov chain. Throughout this work, we will work on this complete filtrated probability space constructed above.
	
	Set $E:=C_r\times \mbb S$. Denote by $\mathscr{B}_b(E)$ (resp., $\mathscr{C}_b(E)$) the set of all bounded Borel functions (resp., continuous and bounded functions) in $E$. Let $\mathscr{P}\left(\mathbb{R}^{-}\right)$ and $\mathscr{P}\left(E\right)$ denote the set of probability measures in $\mathbb{R}^{-}$ and $E$, respectively. In order to overcome the difficulties caused by infinite delay, we further define $\mathscr{P}_c\left(\mathbb{R}^{-}\right)$ for any $c>0$ as shown below:
	$$
	\mathscr{P}_c\left(\mathbb{R}^{-}\right)=\left\{\rho \in \mathscr{P}\left(\mathbb{R}^{-}\right) ; \delta_c(\rho):=\int_{-\infty}^0 \mathrm{e}^{-c \theta} \rho(\mathrm{d} \theta)<\infty\right\} .
	$$
	
	To guarantee the existence and uniqueness of the solution map, we first assume that for any $k \in \mbb S$, $b(\cdot,k)$ and $\sigma(\cdot,k)$ are measurable and locally Lipschitz throughout this paper. Moreover, we impose the following assumptions $(\mathbf{A})$.
	
	\noindent(\hypertarget{A0}{A0}) Let ${G(\textbf{0},l)\equiv \vec{0}}$.   Given  $p\geq 2$, there exist a probability measure $\rho \in \mathscr{P}_{p r}\left(\mathbb{R}^{-}\right)$ and a  constant ${\kappa >0}$  with $\kappa_1=\kappa\delta_{p r}^{\frac{1}{p}}(\rho)\in (0,1)$ such that\[
	|G(\varphi,l)-G(\psi,l)|^p \leq \kappa^p \int_{-\infty}^0|\varphi(\theta)-\psi(\theta)|^p\rho(\d \theta)
	\]
   for $\varphi$, $\psi \in C_r$ and $l \in \mbb S$.
	
	\noindent(\hypertarget{A1}{A1}) For any $l \in \mbb S$, there exist constants $\alpha(l) \in \mathbb{R}$ and $\beta(l) \in \mathbb{R}^{+}$ such that
	$$
	\begin{aligned}
		&\langle\varphi(0)-\psi(0)+G(\psi,l)-G(\varphi,l), b(\varphi, l)-b(\psi, l)\rangle\\
		&\quad\leq  \alpha(l)|\varphi(0)-\psi(0)+G(\psi,l)-G(\varphi,l)|^2+\beta(l) \int_{-\infty}^0|\varphi(\theta)-\psi(\theta)|^2 \rho(\mathrm{d} \theta)
	\end{aligned}
	$$
	for any  $\varphi$, $\psi \in   C_r$.
	
	\noindent(\hypertarget{A2}{A2}) There exists a  constant $\gamma>0$ such that
	$$
	\|\sigma(\varphi, l)-\sigma(\psi, l)\|_{\text {HS }}^2 \leq \gamma\int_{-\infty}^0|\varphi(\theta)-\psi(\theta)|^2 \rho(\mathrm{d} \theta) 
	$$
 for any $\varphi$, $\psi \in  C_r$ and  $l \in \mbb S$.
	
	\begin{remark}\label{eq:toutengg}
		Noting that $G(\textbf{0},l)\equiv \vec{0}$, we have 
		\begin{equation*}
			|G(\varphi,l)|^p\leq \kappa^p \int_{-\infty}^0|\varphi(\theta)|^p\rho(\d \theta).
		\end{equation*}
		Moreover,  (\hyperlink{A0}{A0}) implies that
		\begin{equation}\label{eq:xiangai}
			|G(\varphi,l)-G(\psi,l)|^p\leq\kappa^p\int_{-\infty}^{0} \e^{-p r \theta} \e^{p r \theta}|\varphi(\theta)-\psi(\theta)|^{p} \rho(d \theta) \leq \kappa_1^p\|\varphi-\psi\|_{r}^{p}.
		\end{equation}
	\end{remark}
	When $\S$ is an infinite set, we need some additional assumptions $(\mathbf{H})$ as follows.
	
	\noindent(\hypertarget{(H1)}{H1}) Suppose that for any $k\in \S$,
	\begin{equation*}
		q_k:=-q_{kk}=\sum_{l\neq k}q_{kl}\leq M,
	\end{equation*}
	where $M$ is a positive constant independent of $k$.

	\noindent(\hypertarget{H2}{H2}) Suppose that  ${\alpha_{sup}}:=\sup_{k\in \S}\alpha(k)$, ${\beta_{sup}}:=\sup_{k\in \S}\beta(k)<\infty$, $\sup_{k\in \S}|b(\textbf{0}, k)|<\infty$ and $\sup_{k\in \S}\|\sigma(\textbf{0}, k)\|_{\rm H S}<\infty$.

	We also need the following lemma.
	\begin{lemma}\cite[Lemma 6.4.1]{Maostochatic}
		Let $p>1$, $\varepsilon>0$,  and $a,b\in\mathbb{R}$. Then
		\begin{equation}\label{eq:jianbang}
			|a+b|^p\leq \left(1+\varepsilon^{\frac{1}{p-1}}\right)^{p-1}\left(|a|^p+\frac{|b|^p}{\varepsilon}\right).
		\end{equation}	
	\end{lemma}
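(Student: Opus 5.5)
The inequality is scalar, so I would reduce it to $|a|+|b|$ and then use convexity. Since $|a+b|\le |a|+|b|$ and $t\mapsto t^p$ is nondecreasing on $[0,\infty)$, it suffices to prove
\[
(x+y)^p\le \left(1+\varepsilon^{\frac{1}{p-1}}\right)^{p-1}\left(x^p+\frac{y^p}{\varepsilon}\right), \qquad x,y\ge 0 .
\]

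I would write $x+y$ as a convex combination and apply convexity of $t\mapsto t^p$ for $p>1$. Set $\lambda=\frac{1}{1+\varepsilon^{1/(p-1)}}\in(0,1)$, so that $1-\lambda=\frac{\varepsilon^{1/(p-1)}}{1+\varepsilon^{1/(p-1)}}$. Then
\[
x+y=\lambda\cdot\frac{x}{\lambda}+(1-\lambda)\cdot\frac{y}{1-\lambda},
\]
and Jensen's inequality gives
\[
(x+y)^p\le \lambda^{1-p}x^p+(1-\lambda)^{1-p}y^p .
\]

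The only step needing care is checking the constants. First, $\lambda^{1-p}=\left(1+\varepsilon^{1/(p-1)}\right)^{p-1}$. Second,
\[
(1-\lambda)^{1-p}=\left(1+\varepsilon^{1/(p-1)}\right)^{p-1}\left(\varepsilon^{1/(p-1)}\right)^{-(p-1)}=\left(1+\varepsilon^{1/(p-1)}\right)^{p-1}\varepsilon^{-1}.
\]
Substituting these into the Jensen bound yields exactly \eqref{eq:jianbang}.

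Alternatively, the same bound follows from Hölder's inequality with exponents $p$ and $p/(p-1)$. One writes $x+y=1\cdot x+\varepsilon^{1/p}\cdot(\varepsilon^{-1/p}y)$ and bounds it by
\[
\left(1+\varepsilon^{\frac{1}{p-1}}\right)^{\frac{p-1}{p}}\left(x^p+\varepsilon^{-1}y^p\right)^{\frac1p},
\]
then raises both sides to the power $p$.
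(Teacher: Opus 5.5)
Your proof is correct. The Jensen computation with $\lambda=(1+\varepsilon^{1/(p-1)})^{-1}$ and the H\"older alternative both give exactly the constants in \eqref{eq:jianbang}. No separate treatment of $a=0$ or $b=0$ is needed, since you divide only by $\lambda$ and $1-\lambda$.

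The paper gives no proof of this lemma; it simply cites Mao's book. So there is no in-paper argument to compare against, but two features of your route are worth noting.

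First, your intermediate Jensen bound $(x+y)^p\le\lambda^{1-p}x^p+(1-\lambda)^{1-p}y^p$ is precisely the reparametrized inequality \eqref{eq:yao} with $\varepsilon'=1-\lambda$. The substitution $\varepsilon=(\varepsilon'/(1-\varepsilon'))^{p-1}$ in the subsequent remark is just this change of variable. Your argument therefore proves both forms at once, and \eqref{eq:yao} is the form the paper uses most often.

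Second, your reduction through $|a+b|\le|a|+|b|$ works verbatim for $a,b\in\mathbb{R}^d$, and indeed in any normed space. This matters because the paper applies the inequality to vectors, not just to real numbers as stated. Examples are $Y^{\xi,i}(0)=\xi(0)-G(\xi,i)$ in the estimate of $I_1$, and $X(u)=Y(u)+G(X_u,\Lambda(u))$ throughout. Your proof thus covers the version the paper actually needs.
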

	\begin{remark}
		Letting $\varepsilon=\left(\frac{\varepsilon^{\prime}}{1-\varepsilon^{\prime}}\right)^{p-1}$, $\varepsilon^{\prime}\in (0,1)$, yields that
		\begin{equation}\label{eq:yao}
			|a+b|^p\leq \frac{1}{(1-\varepsilon^{\prime})^{p-1}}|a|^p+\frac{1}{(\varepsilon^{\prime})^{p-1}}|b|^p.
		\end{equation}
	\end{remark}
	
	In what follows, we write ${X(t)}$, $X_t$ and ${\Lambda(t)}$ as ${X^{\xi, i}(t)}$, ${X_{t}^{\xi, i}}$ and ${\Lambda^{i}(t)}$ respectively, to emphasize the initial data $(X_{0},\Lambda(0))=(\xi,i)$.
	\section{Existence and uniqueness}\label{sec:3}
	In this section, we give the existence and uniqueness of the solution map to the RNSFDE $\left(X_t, \Lambda(t)\right)$ defined by \eqref{eq:fangcheng1} and \eqref{eq:fangcheng2}.  
	\begin{definition}
		A $C_r$-valued stochastic process ${(X_t)_{ t\geq 0}}$ is called a solution map to the RNSFDE if
		\begin{enumerate}[(1)]
			\item $X_t$ is continuous and ${\mathcal{F}_{t}}$-adapted;
			\item ${\int_{{0}}^{\infty}\|X_t\|_r^{2} \d t<\infty}$, almost surely;
			\item ${X_0=\xi}$ and with probability 1,
			\begin{equation}\label{eq:NSFDE2}
				X(t)-G\left(X_{t},\Lambda(t)\right)=\xi(0)-G\left(\xi,i\right)+\int_{{0}}^{t} b\left(X_{s},\Lambda(s)\right) \d s+\int_{{0}}^{t} \sigma\left(X_{s},\Lambda(s)\right) \d W(s),
			\end{equation}
		\end{enumerate}
		for any  $t \geq 0$. A solution map ${(X_t)}$ is said to be unique if any other solution map $(Y_t)$ is indistinguishable from it, that is,
		\[
		\mbb P\left\{\|X_t-Y_t\|_r=0, \text{ for all }t \geq 0\right\}=1 .
		\]
		Moreover, the corresponding $\mbb R^n$-valued stochastic process ${(X(t))_{ t\geq 0}}$ is called a solution of the RNSFDE.
	\end{definition}
	To proceed, we need to introduce a family of stochastic processes. For each $k\in \mbb S$, consider the following neutral stochastic functional differential equation, i.e.
	\begin{equation}\label{eq:NSFGE}
		\d\{X^{(k)}(t)-G(X_t^{(k)},k)\}=b(X_t^{(k)},k)\d t+\sigma(X_t^{(k)},k)\d W(t).
	\end{equation}
	Then, we introduce the definition of the local solution map with respect to Eq. \eqref{eq:NSFGE} and establish the well-posedness for Eq. \eqref{eq:NSFGE}.
	\begin{definition}\label{def:shengtong}
		Let $\tau_e$ be a stopping time such that $0\leq \tau_e\leq \infty$. A continuous $C_r$-valued and \( \mathcal{F}_{t}^{(1)} \)-adapted process \( X_t^{(k)}\), \(0\leq t \leq \tau_{e} \), is called a local solution map of \eqref{eq:NSFGE} with the initial data \( \xi \in C_{r} \), if \( X_0=\xi \) and for any \( t \geq 0 \)  and  \( n \geq 1 \),
		\[
		X^{(k)}(t\wedge \tau_{n})-G\left(X_{t\wedge \tau_{n}}^{(k)},k\right)=\xi(0)-G\left(\xi,k\right)+\int_{0}^{t\wedge\tau_{n}} b\left(X_{s}^{(k)},k\right) \d s+\int_{0}^{t\wedge\tau_{n}} \sigma\left(X_{s}^{(k)},k\right) \d W(s) 
		\]
		holds with probability 1, where \( \left\{\tau_{n}\right\}_{n \geq 1} \) is a nondecreasing sequence of stopping times such that \( \tau_{n} \rightarrow \tau_{e} \) a.s., as \( n \rightarrow \infty \). If, furthermore, \( \lim \sup _{t \rightarrow \tau_{e}}\|X^{(k)}_t\|_r=\infty \) is satisfied a.s., whenever \( \tau_{e}<\infty \) a.s., then it is called a maximal local solution map and \( \tau_{e} \) is called the explosion time. A maximal local  solution map  \( X_t^{(k)},0\leq t<\tau_{e} \),  is called a global solution map when \( \tau_{e}=\infty \). A maximal local  solution map  \( X_t^{(k)},0\leq t<\tau_{e}, \) is said to be unique if for any other maximal local  solution map \( \bar{X}_t^{(k)}\), \(0\leq t<\bar{\tau}_{e} \), we have \( \tau_{e}=\bar{\tau}_{e} \) and \( X_t^{(k)}=\bar{X}_t^{(k)} \) for \( 0\leq t<\tau_{e} \) a.s..
	\end{definition}
	\begin{lemma}\label{theorem:chabei}
		For fixed $k\in \S$, suppose that assumptions (\hyperlink{A0}{A0})-(\hyperlink{A2}{A2}) hold for $p=2$,  then for any initial data $\xi\in C_r$,  Eq. \eqref{eq:NSFGE} has a global solution map $X_t^{(k)}$ almost surely,  which is continuous and $\mcl F_t^{(1)}$-adapted.
	\end{lemma}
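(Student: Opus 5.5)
Fix $k\in\S$ and write $X=X^{(k)}$. The plan is the standard two-step scheme: build a unique maximal local solution map by truncation, then exclude explosion by a second-moment bound on the auxiliary process $Y(t):=X(t)-G(X_t,k)$ via (\hyperlink{A1}{A1}) with $\psi=\textbf{0}$.

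\textbf{Step 1: local solutions.} For $n\geq1$ let $\pi_n$ be the radial retraction of $C_r$ onto $\{\|\varphi\|_r\le n\}$. Set $b_n(\varphi)=b(\pi_n\varphi,k)$ and $\sigma_n(\varphi)=\sigma(\pi_n\varphi,k)$; these are globally Lipschitz in $\|\cdot\|_r$ because $b,\sigma$ are locally Lipschitz. The neutral term is handled by \eqref{eq:xiangai}: $G(\cdot,k)$ is a contraction in $\|\cdot\|_r$ with constant $\kappa_1<1$. So I would run a Picard iteration on $[0,T]$ for the unknown $X$ in the space of adapted continuous $C_r$-valued processes, writing the equation as
\[
X(t)=G(X_t,k)+\xi(0)-G(\xi,k)+\int_0^t b_n(X_s)\,\d s+\int_0^t\sigma_n(X_s)\,\d W(s),
\]
with $X_0=\xi$. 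Take the norm $\E\sup_{s\le t}\|X_s\|_r^2$ and use \eqref{eq:yao} with $\varepsilon'=\kappa_1$ to absorb the $G$ term, as in Mao's treatment of neutral equations. This gives existence and uniqueness of a global solution $X^n$ of the truncated equation. Two points need care:
\begin{enumerate}[(i)]
\item $\sup_{-\infty<\theta\le 0}\e^{r\theta}|X(t+\theta)|$ splits into a part over $(-\infty,-t]$, which is controlled by $\|\xi\|_r$, and a part over $[-t,0]$, which is controlled by $\sup_{s\le t}|X(s)|$.
\item Continuity of $t\mapsto X_t$ in $C_r$ follows from uniform continuity on compacts together with the existence of the limit defining $C_r$.
\end{enumerate}
Let $\tau_n=\inf\{t:\|X^n_t\|_r\ge n\}$. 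Uniqueness gives $X^{n+1}=X^n$ on $[0,\tau_n]$, so the solutions patch together into a maximal local solution on $[0,\tau_e)$ with $\tau_e=\lim\tau_n$.

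\textbf{Step 2: non-explosion.} Apply Itô's formula to $|Y(t)|^2$ up to $\tau_n$. Write $b(X_t)=[b(X_t)-b(\textbf{0})]+b(\textbf{0})$ and use (\hyperlink{A1}{A1}) with $\psi=\textbf{0}$, noting $G(\textbf{0},k)=\vec 0$. Handle $\sigma$ the same way with (\hyperlink{A2}{A2}). This yields
\[
\d|Y|^2\le \Big(C|Y|^2+C\int_{-\infty}^0|X(t+\theta)|^2\rho(\d\theta)+C\Big)\d t+\d M(t).
\]
Next the $\rho$-integral has to be controlled. Using $\rho\in\mathscr P_{2r}$, split it into $\theta\le -t$, which is bounded by $\|\xi\|_r^2\e^{2rt}\delta_{2r}(\rho)$, and $\theta>-t$. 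Combine this with Remark \ref{eq:toutengg} and \eqref{eq:yao}:
\[
|X(t)|^2\le \frac{|Y(t)|^2}{1-\kappa'}+\frac{\kappa^2}{\kappa'}\int_{-\infty}^0|X(t+\theta)|^2\rho(\d\theta).
\]
Then bound $\E\sup_{s\le t\wedge\tau_n}|X(s)|^2$ in terms of $\E\sup|Y|^2$, absorbing the $G$-part since $\kappa_1<1$. The martingale term is treated by Burkholder--Davis--Gundy, and Gronwall's inequality gives $\E\sup_{s\le T\wedge\tau_n}\|X_s\|_r^2\le C_T(1+\|\xi\|_r^2)$ uniformly in $n$. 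Hence $n^2\,\P(\tau_n\le T)\le C_T$, so $\tau_e=\infty$ a.s.

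\textbf{Main obstacle.} The key difficulty is the absorption in Step 2. The delayed integral $\int|X(s+\theta)|^2\rho(\d\theta)$, taken over all $s\le t$, must be bounded by $\sup_{s\le t}|X(s)|^2$ with coefficient at most $\kappa^2\delta_{2r}(\rho)$-type constants strictly below $1$, so that it can be moved to the left-hand side. I would first try the weighted bound $\e^{2r\theta}$ inserted inside the integral, exactly as in \eqref{eq:xiangai}, which produces the factor $\kappa_1^2<1$ after choosing $\kappa'$ close to $\kappa_1$.
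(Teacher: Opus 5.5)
Your proposal is correct and follows the same scheme as the paper. It truncates by radial retraction, patches the truncated solutions into a maximal local solution, and then proves a uniform-in-$n$ second-moment bound via Itô's formula for $Y=X-G(X_\cdot,k)$, (A1)--(A2) with $\psi=\textbf{0}$, BDG, absorption of the neutral term using $\kappa_1<1$, and Gronwall, which gives $n^2\P(\tau_n\le T)\le C_T$. The differences are cosmetic: the paper cites an existence result for the globally Lipschitz truncated neutral equation rather than running Picard iteration, and it works with the weighted quantities $\e^{2rt}|Y(t)|^2$ and $\E\sup_{s\le t}\e^{2rs}|X(s)|^2$ instead of unweighted ones on $[0,T]$; both versions close because the $G$-term enters with a factor at most $\kappa_1^2<1$.
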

	\begin{proof}
		For any $n\geq 1$, let us define truncation the functional \( b_{n} \) and \( \sigma_{n} \) as follows:
		\[
		\begin{array}{c}
			b_{n}(\varphi,k)=\left\{\begin{array}{ll}
				b(\varphi,k), & \|\varphi\|_{r} \leq n, \\
				b\left(n \varphi/ \|\varphi\|_{r},k\right), & \|\varphi\|_{r}>n,
			\end{array}\right. 
			\sigma_{n}(\varphi,k)=\left\{\begin{array}{ll}
				\sigma(\varphi,k), & \|\varphi\|_{r} \leq n, \\
				\sigma\left(n \varphi/ \|\varphi\|_{r},k\right), & \|\varphi\|_{r}>n.
			\end{array}\right.
		\end{array}
		\]
		It is obvious that  \( b_{n} \) and \( \sigma_{n} \) satisfy the global Lipschitz condition and the linear growth condition. Consider
		\begin{equation}\label{eq:nicai}
			\d\{X^{(k)(n)}(t)-G(X_t^{(k)(n)},k)\}=b_n(X_t^{(k)(n)},k)\d t+\sigma_n(X_t^{(k)(n)},k)\d W(t),\quad t\geq 0
		\end{equation}
		with the initial data \( \xi \in C_r \). Moreover, it follows from \eqref{eq:xiangai} that $G(\cdot,k)$ satisfies the global Lipschitz condition. Hence by Theorem \cite[Theorem A.4]{zhang2024stability}, there is a unique continuous solution map $X^{(k)(n)}_t$ satisfying  \eqref{eq:nicai}.   Define a stopping time sequence
		\[
		\tau_{n}=\inf \left\{t \in[0,\tau_e) ,\|X_{t}^{(k)(n)}\|_{r}\geq n\right\},
		\]
		with the usual convention \( \inf \emptyset=\infty \). 	It is not difficult to show that
		\[
		X_{t}^{(k)(n+1)}=X_{t}^{(k)(n)},\quad \text{if }0\leq t\leq \tau_n.
		\]
		This implies that	\(\left\{\tau_{n}\right\}_{n \geq 1} \) is a nondecreasing sequence and \( \tau_{n} \rightarrow \tau_{\infty} \leq \) \( \tau_{e} \) a.s. as \( n \rightarrow \infty \).	Define \( X_t^{(k)}\), \(0\leq t \leq \tau_{e} \), by
		\[
		X_t^{(k)}=X_t^{(k)(n)},\quad t\in [\tau_{n-1},\tau_n),\;n\geq 1.
		\]
		Note that for any $ t \in\left[0, \tau_{n}\right]$, $X^{(k)}_t=X^{(k)(n)}_t $. Letting \( n \rightarrow \infty \), it follows that for any \( t \in\left[0, \tau_{e}\right) \), there exists a  local solution map \( X^{(k)}_t \) for \eqref{eq:NSFGE}. By a standard procedure, we can show \( X_t^{(k)}\), \(0\leq t \leq \tau_{e} \), is also the unique maximal local solution map.
		
		To show that this solution map is global, it is sufficient to prove that \( \tau_{e}=\infty \) a.s.  If we can show \( \tau_{\infty}=\infty \) a.s, then \( \tau_{e}=\infty \) a.s. This is equivalent to proving that for any \( T>0\), \(\mathbb{P}\left(\tau_{n} \leq T\right) \rightarrow 0 \) as \( n \rightarrow \infty \). 
		
		By the definitions of $\tau_n$ and $X_t^{(k)}$, $\|X_{\tau_n}^{(k)}\|_{r} = n$, which implies 
		\begin{equation}\label{eq:xiawu}
			\begin{aligned}
				n^2\mbb P(\tau_n\leq T)&=\E\left(\|X_{\tau_{n}}^{(k)}\|_r^2 \mathbf{1}_{\{\tau_n\leq T \}} \right)\leq \E\|X_{T\wedge\tau_n}^{(k)}\|_r^2 \\
				&\leq \|\xi\|_r^2+\E\left(\sup _{0< t\leq T\wedge\tau_n} \e^{2 r t}\left|X^{(k)}(t)\right|^{2}\right).
			\end{aligned}
		\end{equation}
		Set $H(t):=\E\left(\sup _{0< s\leq t\wedge\tau_n} \e^{2 r s}\left|X^{(k)}(s)\right|^{2}\right)$, $0\leq t\leq T$.  In what follows, we shall estimate $H(t)$.  Letting $p=2$  and $\varepsilon=\kappa_1=\kappa\delta_{2 r}^{\frac{1}{2}}(\rho)\in (0,1)$ in  \eqref{eq:yao}, we have
		\begin{equation}\label{eq:xielun}
			\begin{aligned}
				H(t)&\leq \frac{1}{1-\kappa_1} \mathbb{E}\left(\sup_{0< s \leq t\wedge\tau_n} \e^{2 r s}\left|X^{(k)}(s)-G\left(X_{s}^{(k)},k\right)\right|^{2}\right)
				+\frac{1}{\kappa_1} \mathbb{E}\left(\sup_{0< s \leq t\wedge\tau_n} \e^{2 r s}\left|G\left(X_{s}^{(k)},k\right)\right|^{2}\right). 
			\end{aligned}
		\end{equation}
		By (\hyperlink{A0}{A0}), we have
		\begin{equation*}
			\begin{aligned}
				\mathbb{E}\left(\sup_{0< s \leq t\wedge\tau_n} \e^{2 r s}\left|G\left(X_{s}^{(k)},k\right)\right|^{2}\right)
				&\leq  \kappa^2 \mathbb{E}\left(\sup _{0<s \leq t\wedge\tau_n}\int_{-\infty}^0 \e^{-2r\theta}\e^{ 2r(s+\theta)}\left|X^{(k)}(s+\theta)\right|^{2}\rho(\d \theta)\right) \\
				&\leq \kappa^2\delta_{2r}(\rho) \mathbb{E}\left(\sup_{-\infty< s\leq t\wedge\tau_n} \e^{2 r s}\left|X^{(k)}(s)\right|^{2}\right) \\
				&\leq 
				\kappa_1^2\left(\|\xi\|_{r}^{2}+ \mathbb{E}\Big(\sup _{0< s \leq t\wedge\tau_n} \e^{2 r s}\left|X^{(k)}(s)\right|^{2}\Big)  \right), \\
			\end{aligned}
		\end{equation*}
		which, together with \eqref{eq:xielun}, implies that 
		\begin{equation}\label{eq:meichuan}
			H(t)\leq\frac{\kappa_1}{1-\kappa_1}\|\xi\|_{r}^{2}+\frac{1}{(1-\kappa_1)^{2}} \E\left(\sup_{0< s \leq t\wedge\tau_n} \e^{2 r s}\left|X^{(k)}(s)-G\left(X_{s}^{(k)},k\right)\right|^{2}\right).
		\end{equation}
		It follows from (\hyperlink{A0}{A0})-(\hyperlink{A2}{A2}) that
		\[
		\begin{aligned}
			2\langle&\varphi(0)-G(\varphi,k), b(\varphi, k)\rangle+\|\sigma(\varphi,k)\|_{\rm HS}^2\\
			&=2\langle\varphi(0)-\vec{0}+G(\mathbf{0},k)-G(\varphi,k), b(\varphi, k)-b(\textbf{0},k)\rangle+2\langle\varphi(0)-G(\varphi,k),b(\mathbf{0},k)\rangle\\
			&\quad+\|\sigma(\varphi,k)-\sigma(\mathbf{0},k)+\sigma(\mathbf{0},k)\|_{\rm HS}^2\\
			&\leq  C+(2\alpha(k)+1)|\varphi(0)-G(\varphi,k)|^2+2(\beta(k) +\gamma)\int_{-\infty}^0|\varphi(\theta)|^2 \rho(\mathrm{d} \theta) \\
			&\leq C+2(2\alpha(k)+1)|\varphi(0)|^2+2\Big(\beta(k) +\gamma+\kappa^2(2\alpha(k)+1)\Big)\int_{-\infty}^0|\varphi(\theta)|^2 \rho(\mathrm{d} \theta) .
		\end{aligned}
		\]
		Thus, applying Itô formula to \( \e^{2 r t}\left|X^{(k)}(t)-G\left(X_{t}^{(k)},k\right)\right|^{2}\) yields for any \( t \geq 0 \),
		\begin{equation}\label{eq:shuijiao}
			\begin{aligned}
				\E&\left(\sup _{0< s\leq t\wedge\tau_n} \e^{2 r s}\left|X^{(k)}(s)-G\left(X_{s}^{(k)},k\right)\right|^{2}\right)\\
				&\leq |\xi(0)-G(\xi,k)|^2+\E\Bigg(\sup _{0< s\leq t\wedge\tau_n}\int_{{0}}^s\e^{2ru}\Big(2r|X^{(k)}(u)-G(X_u^{(k)},k)|^2\\
				&\quad+2\langle X^{(k)}(u)-G(X_u^{(k)},k),b(X_u^{(k)},k) \rangle+\|\sigma(X_u^{(k)},k)\|_{\text{HS}}^2\Big)\d u\Bigg)\\
				&\quad+ 2\E\left(\sup _{0< s\leq t\wedge\tau_n}\int_{{0}}^s\e^{2ru}\langle X^{(k)}(u)-G(X_u^{(k)},k),\sigma(X_u^{(k)},k)\d W(u) \rangle \right)\\
				&\leq |\xi(0)-G(\xi,k)|^2+C\e^{2rt}+ C\E\int_{{0}}^{t\wedge\tau_n} \e^{2rs} |X^{(k)}(s)|^2\d s\\
				&\quad+C\E \int_{{0}}^{t\wedge\tau_n} \int_{-\infty}^{{0}}\e^{2rs}|X^{(k)}(s+\theta)|^2 \rho(\d \theta)\d s\\
				&\quad+ 2\E\left(\sup _{0< s\leq t\wedge\tau_n}\int_{{0}}^s\e^{2ru}\langle X^{(k)}(u)-G(X_u^{(k)},k),\sigma(X_u^{(k)},k)\d W(u) \rangle \right).
			\end{aligned}
		\end{equation}
		By means of the BDG inequality, it follows that
		\begin{equation*}
			\begin{aligned}
				2\E&\left(\sup _{0< s\leq t\wedge\tau_n}\int_{{0}}^s\e^{2ru}\langle X^{(k)}(u)-G(X_u^{(k)},k),\sigma(X_u^{(k)},k)\d W(u) \rangle \right)\\
				&\leq 8\sqrt{2}\E\left(\int_{{0}}^{t\wedge\tau_n}\e^{4rs}|\langle X^{(k)}(s)-G(X_s^{(k)},k),\sigma(X_s^{(k)},k) \rangle|^2\d s  \right)^{\frac{1}{2}}\\
				&\leq 8\sqrt{2}\E\left(\sup_{{0} < s \leq {t\wedge\tau_n}} \e^{2rs}|X^{(k)}(s)-G(X_s^{(k)},k)|^2\int_{{0}}^{t\wedge\tau_n}\e^{2rs}\|\sigma(X_s^{(k)},k)\|_{\text{HS}}^2\d s \right)^{\frac{1}{2}}\\
				&\leq \frac{1}{2}\E\left(\sup_{{0} < s \leq {{t\wedge\tau_n}}} \e^{2rs}|X^{(k)}(s)-G(X_s^{(k)},k)|^2 \right)+64\E\int_{{0}}^{t\wedge\tau_n}\e^{2rs}\|\sigma(X_s^{(k)},k)\|_{\text{HS}}^2\d s\\
				&\leq \frac{1}{2}\E\left(\sup_{{0} < s \leq {t\wedge\tau_n}} \e^{2rs}|X^{(k)}(s)-G(X_s^{(k)},k)|^2 \right)+64\gamma\E\int_{0}^{{t\wedge\tau_n}} \int_{-\infty}^{0} \e^{2r s}|X(s+\theta)|^{2} \rho(\mathrm{d} \theta) \mathrm{d} s+C\e^{2r t}.
			\end{aligned}
		\end{equation*}
		Plugging this into \eqref{eq:shuijiao} implies 
		\begin{equation}\label{eq:fuzhi}
			\begin{aligned}
				\E&\left(\sup _{0< s\leq t\wedge\tau_n} \e^{2 r s}\left|X^{(k)}(s)-G\left(X_{s}^{(k)},k\right)\right|^{2}\right)\\
				&\leq 2|\xi(0)-G(\xi,k)|^2+C\e^{2rt}+ C\E\int_{{0}}^{t\wedge\tau_n} \e^{2rs} |X^{(k)}(s)|^2\d s\\
				&\quad+C\E \int_{{0}}^{t\wedge\tau_n} \int_{\infty}^{{0}}\e^{2rs}|X^{(k)}(s+\theta)|^2 \rho(\d \theta)\d s\\
			\end{aligned}
		\end{equation}
		On the other hand, by the Fubini theorem and the variable substitution technique,
		we deduce that
		\begin{equation}\label{eq:zhuyaoanother}
			\begin{aligned}
				\int_{0}^{t\wedge\tau_n} &\int_{-\infty}^{0} \e^{2r s}|X^{(k)}(s+\theta)|^{2} \rho(\mathrm{d} \theta) \mathrm{d} s \\
				&=\int_{0}^{t} \int_{-\infty}^{-s} \e^{-2 r\theta} \e^{2 r(s+\theta)}|X^{(k)}(s+\theta)|^{2} \rho(\mathrm{d} \theta) \mathrm{d} s+\int_{-t\wedge\tau_n}^{0}\int_{0}^{t\wedge\tau_n+\theta} \e^{2r(s-\theta)}|X^{(k)}(s)|^{2} \mathrm{d} s \rho(\mathrm{d} \theta) \\
				&\leq \delta_{2r}\left(\rho\right)\|\xi\|_{r}^{2}t+\delta_{2r}\left(\rho\right) \int_{0}^{t\wedge\tau_n}\e^{2r s}|X^{(k)}(s)|^2\mathrm{d}s.
			\end{aligned}
		\end{equation}
		Substituting  \eqref{eq:zhuyaoanother} into \eqref{eq:fuzhi}, we conclude that
		\[
		\E\left(\sup _{0< s\leq t\wedge \tau_n} \e^{2 r s}|X^{(k)}(s)-G\left(X_{t},k\right)|^{2}\right)\leq C+C\E\int_{{0}}^{t\wedge\tau_n} \e^{2rs}|X(s)|^2\d s,
		\]
		which, together with \eqref{eq:meichuan}, implies that
		\[
		H(t)\leq C+C\int_{{0}}^t H(s)\d s.
		\]
		By the Gronwall inequality, we obtain $H(t)\leq C\e^{Ct}.$
		Choosing $t=T$ gives 
		\begin{equation}\label{Eq:1031:1}
			n^2\mbb P(\tau_n\leq T)\leq \|\xi\|_r^2+C\e^{CT}.
		\end{equation}
		Note that the right side is independent of $n$ in \eqref{Eq:1031:1}. Letting $n\to \infty$ yields 
		\[
		\limsup_{n \rightarrow \infty}\mbb P(\tau_n\leq T) = 0,
		\]
		which implies that \eqref{eq:NSFGE} has a unique global solution map $X_t^{(k)}$ on $[0, \infty)$ almost surely.
	\end{proof}
	Before establishing the existence and uniqueness of the solution map to the RNSFDE $\left(X_t, \Lambda(t)\right)$, we first introduce Skorokhod’s representation of the Markov chain $(\Lambda(t))$ in terms of the Poisson random measure as in \cite{nguyen2016modeling}. Precisely, let
	\[
	\Delta_{1 2}=\left[0, q_{1 2}\right), \Delta_{1 l}=\left[\sum_{j=2}^{l-1}q_{1j},\sum_{j=2}^{l}q_{1j}\right),\quad l\geq 3,
	\]
	and for each $k \in \mathbb{S}$ and $k>1$, let
	\[
	\Delta_{k 1}=\left[0, q_{k 1}\right), \Delta_{k l}=\left[\sum_{j=1,j\neq k}^{l-1}q_{kj},\sum_{j=1,j\neq k}^{l}q_{kj}\right),\quad l>1,l\neq k.
	\]
	Let
	\[
	U_{k}=\bigcup_{l \geq 1, l \neq k} \Delta_{kl}, \quad k \geq 1.
	\]
	For notation convenience, we put \( \Delta_{kk}=\varnothing \) and \( \Delta_{kl}=\varnothing \) if \( q_{kl}=0, k, l \in \mathbb{S}\). Note that for each $k \in \mathbb{S}$, $\{\Delta_{kl}:l\in \mathbb{S}\}$ are disjoint intervals, and the length of the interval $\Delta_{kl}$ is equal to $q_{kl}$.  When $\S$ is a finite set, it is obvious that  $\mathfrak{m}\left(U_{k}\right)$ is bounded  by $\max_{1\leq l\leq N}q_{l}$  for any $k\in\S$,  where \( \mathfrak{m}(\mathrm{d} x) \) denotes the Lebesgue measure over \( \mathbb{R} \). When $\S$ is an  infinite set,  (\hyperlink{H1}{H1}) ensures that $\mathfrak{m}\left(U_{k}\right)$ is also bounded. Without loss generality, we assume that $\mathfrak{m}\left(U_{k}\right)$ is bounded above by $M$, whether $\mathbb{S}$ is a finite set or not, 
	
	Let \( \xi_{n}^{(k)}, k, n=1,2, \ldots \), be \( U_{k} \)-valued random variables with
	$\mathbb{P}(\xi_{n}^{(k)} \in \mathrm{d} x)=\mathfrak{m}(\d x)/\mathfrak{m}(U_{k}),
	$ and \( \tau_{n}^{(k)}, k, n \geq 1 \), be non-negative random variables satisfying \( \mathbb{P}(\tau_{n}^{(k)}>t)=\exp\{-\mathfrak{m}\left(U_{k}\right)t\} \), \( t \geq 0 \). Suppose that \( \{\xi_{n}^{(k)}, \tau_{n}^{(k)}\}_{k, n \geq 1} \) are all mutually independent. Put
	\[
	\zeta_{0}^{(k)}=0, \quad k \geq 1;\quad \zeta_{n}^{(k)}=\tau_{1}^{(k)}+\cdots+\tau_{n}^{(k)}, \quad k,n  \geq 1.
	\]
	Let
	\[
	D_{p}=\bigcup_{k \geq 1} \bigcup_{n \geq 0}\left\{\zeta_{n}^{(k)}\right\}\text{ and }p(\zeta_{n}^{(k)})=\xi_{n}^{(k)}\quad \text { for } k,n  \geq 1.
	\]
	Correspondingly, put
	\begin{equation}\label{eq:bujini}
		N([0, t] \times A)=\#\left\{ s \in D_{p}:0< s \leq t, p(s) \in A\right\}, t>0, A \in \mathscr{B}([0, \infty)) .
	\end{equation}
	As a consequence, we get a Poisson point process \( (p(t))_{t\geq 0} \) and a Poisson random measure \( {N}(\mathrm{d} t, \mathrm{d} u) \) with intensity \( \mathfrak{m}(\mathrm{d} u)\mathrm{d} t  \). Moreover, we know that \( \mathfrak{m}(\d u) \d t\) is the compensator of Poisson random measure $N(\d t, \d u)$. Define a function $h: \mathbb{S} \times\left[0, M\right]$ by
	\[
	h(k, u)=\sum_{l \in \mathbb{S}}(l-k) \textbf{1}_{\triangle_{k l}}(u).
	\]
	Then, $(\Lambda(t))$ can be reformulated as
	\begin{equation}\label{Eq:1026:1}
		\begin{split}
			\d \Lambda(t) & =\int_{[0, M]} h(\Lambda(t-), u) N(\d t, \d u). \\
		\end{split}
	\end{equation}
	With the aid of Lemma \ref{theorem:chabei} and \eqref{Eq:1026:1}, we can show the existence and uniqueness of the solution map to the RNSFDE by a similar argument as in \cite[Theorem 3.1]{nguyen2016modeling}. For simplicity, we omit the details.
	\begin{theorem}\label{Eq:1026:2}
		Suppose that assumptions (\hyperlink{A0}{A0})-(\hyperlink{A2}{A2}) hold for $p=2$. When   
		$\S$ is an infinite set, we need to additionally assume that  (\hyperlink{B1}{B1}) holds. Then for any initial data $(\xi,i)\in \mathscr{C}_r\times \S$, the RNSFDE admits a unique  solution map. 
	\end{theorem}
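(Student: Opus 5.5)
We construct the solution by interlacing: solve the non-switching equation \eqref{eq:NSFGE} with the regime frozen between the jump times of $(\Lambda(t))$, and patch the pieces together. Let $0=\zeta_0<\zeta_1<\zeta_2<\cdots$ be the successive jump times of $\Lambda^{i}(t)$, generated through the Poisson random measure in \eqref{Eq:1026:1}. Since $\mathfrak{m}(U_k)\le M$ for every $k$ (by (\hyperlink{H1}{H1}) in the infinite case), the total jump intensity is uniformly bounded. Hence the number of jumps in $[0,T]$ is dominated by a Poisson variable with mean $MT$, so $\zeta_n\to\infty$ a.s. and there are only finitely many switches on every bounded interval.

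\emph{Existence.} On $[\zeta_0,\zeta_1)$ we have $\Lambda(t)=i$. By Lemma \ref{theorem:chabei} with $k=i$ and initial segment $\xi$, there is a global, continuous, $\mathcal F^{(1)}_t$-adapted solution map $X^{(i)}_t$, and we set $X_t=X^{(i)}_t$ for $t\in[0,\zeta_1]$. Inductively, suppose $X_t$ is defined on $[0,\zeta_n]$ and $\Lambda(\zeta_n)=k_n$. We then solve \eqref{eq:NSFGE} with $k=k_n$, Brownian motion $W(\zeta_n+\cdot)-W(\zeta_n)$ and initial segment $X_{\zeta_n}\in C_r$. This is possible because $\zeta_n$ is independent of $W$ on our product space $\Omega_1\times\Omega_2$, so conditionally on $\Omega_2$ it is a deterministic time. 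We use the solution on $[\zeta_n,\zeta_{n+1}]$. Lemma \ref{theorem:chabei} gives no explosion inside each interval, and since $\zeta_n\to\infty$ this defines $X_t$ for all $t\ge0$. The concatenated process satisfies \eqref{eq:NSFDE2}. On each piece the increment of $X(t)-G(X_t,\Lambda(t))$ equals the drift and diffusion integrals with the constant regime. At a jump time $\zeta_n$ the left-hand quantity switches from $G(\cdot,k_{n-1})$ to $G(\cdot,k_n)$; we handle this by starting the $n$-th piece with $\xi(0)-G(\xi,k_n)$ evaluated at $\xi=X_{\zeta_n}$, consistent with the integral form as written in the definition, whose left side uses $\Lambda(t)$. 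Continuity of $X_t$ in $C_r$ and adaptedness to $\mathscr F^{(1)}_t\otimes\mathscr F^{(2)}_t$ follow piecewise. The integrability $\int_0^T\|X_t\|_r^2\,\d t<\infty$ a.s. follows from the moment bound $H(t)\le Ce^{Ct}$ obtained in the proof of Lemma \ref{theorem:chabei}, applied on each of the finitely many pieces in $[0,T]$.

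\emph{Uniqueness.} Let $X_t$ and $\bar X_t$ be two solution maps. On $[0,\zeta_1]$ both solve \eqref{eq:NSFGE} with $k=i$. Set $Z(t)=X(t)-\bar X(t)-(G(X_t,i)-G(\bar X_t,i))$ and stop at $\sigma_R=\inf\{t:\|X_t\|_r\vee\|\bar X_t\|_r\ge R\}$. Apply Itô's formula to $e^{2rt}|Z(t)|^2$ and use (\hyperlink{A1}{A1}), (\hyperlink{A2}{A2}), BDG, the contraction \eqref{eq:xiangai} with $\kappa_1<1$ (as in \eqref{eq:meichuan}) and the Fubini estimate \eqref{eq:zhuyaoanother} with zero initial difference. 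This gives $\E\sup_{s\le t\wedge\sigma_R}e^{2rs}|X(s)-\bar X(s)|^2\le C\int_0^t(\cdots)\,\d s$, and Gronwall yields zero. Letting $R\to\infty$ shows $X=\bar X$ on $[0,\zeta_1]$, and induction over $n$ gives agreement on $[0,\infty)$.

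The main obstacle is the infinite state space. There the patching argument needs the jump times not to accumulate, and that is exactly where (\hyperlink{H1}{H1}) (the bound $\mathfrak m(U_k)\le M$) is used. I also expect care to be needed at the switching instants: the neutral term $G(X_t,\Lambda(t))$ jumps with $\Lambda$, so I must check that the integral identity \eqref{eq:NSFDE2} holds at the switching times, with $X(t)$ itself staying continuous.
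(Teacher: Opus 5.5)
\textbf{Gap at the switching times.} Your interlacing scheme is essentially what the paper has in mind: freeze the regime between jumps of $\Lambda$, solve \eqref{eq:NSFGE} on each interval by Lemma~\ref{theorem:chabei}, use (H1) to rule out accumulation of switches, and use Gronwall for uniqueness. The paper only points to Lemma~\ref{theorem:chabei}, the representation \eqref{Eq:1026:1} and the argument of \cite[Theorem 3.1]{nguyen2016modeling}, and omits the details. However, the step you flagged at the switching instants genuinely fails, and your proposed fix does not repair it.

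Because your $X$ is continuous, the first piece gives
\[
X(\zeta_1)-G(X_{\zeta_1},i)=\xi(0)-G(\xi,i)+\int_0^{\zeta_1}b(X_s,i)\,\d s+\int_0^{\zeta_1}\sigma(X_s,i)\,\d W(s).
\]
The second piece restarts from $X(\zeta_1)-G(X_{\zeta_1},k_1)$. Adding the two, for $t\in[\zeta_1,\zeta_2)$,
\[
X(t)-G(X_t,\Lambda(t))=\xi(0)-G(\xi,i)+\int_0^t b(X_s,\Lambda(s))\,\d s+\int_0^t\sigma(X_s,\Lambda(s))\,\d W(s)+G(X_{\zeta_1},i)-G(X_{\zeta_1},k_1).
\]
So \eqref{eq:NSFDE2} fails whenever $G(X_{\zeta_1},i)\neq G(X_{\zeta_1},k_1)$. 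The remark that the left side of \eqref{eq:NSFDE2} uses $\Lambda(t)$ does not help. The right side is continuous in $t$, so any solution must make $t\mapsto X(t)-G(X_t,\Lambda(t))$ continuous. Your process instead jumps by $G(X_{\zeta_n},k_{n-1})-G(X_{\zeta_n},k_n)$ at every switch. What you have built solves the equation with an extra term $\sum_{\zeta_n\le t}[G(X_{\zeta_n},k_{n-1})-G(X_{\zeta_n},k_n)]$ on the right.

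\textbf{No other restart fixes this.} Suppose you restart from the left limit, i.e.\ require $X(\zeta_n)-G(X_{\zeta_n},k_n)=X(\zeta_n-)-G(X_{\zeta_n-},k_{n-1})$. This gives \eqref{eq:NSFDE2}, but then $X$ must jump at $\zeta_n$ unless $G(\cdot,k_n)$ and $G(\cdot,k_{n-1})$ agree at $X_{\zeta_n-}$. A jump puts $X_t$ outside $C_r$, whose elements are continuous, for all $t>\zeta_n$. So the two requirements of the definition, a continuous $C_r$-valued segment process and the identity \eqref{eq:NSFDE2}, are incompatible in general.

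For a concrete example, take $d=1$, $\S=\{1,2\}$ with $q_{12}=q_{21}=1$, $\rho=\delta_{-1}$, $G(\varphi,1)=0$, $G(\varphi,2)=c\,\varphi(-1)$ with $0<c<\e^{-r}$, $b(\varphi,l)=-(\varphi(0)-G(\varphi,l))$, $\sigma\equiv 1$, $\xi\equiv 1$ and $i=1$. Then (A0)--(A2) hold with $p=2$. Yet on $\{\zeta_1<1\}$, which has positive probability, a continuous solution would need $c\,X(\zeta_1-1)=0$, whereas $X(\zeta_1-1)=\xi(\zeta_1-1)=1$.

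Hence the existence part cannot be proved as formulated when $G$ genuinely depends on the regime. Your argument does go through in either of two settings:
\begin{itemize}
\item $G(\varphi,k)=G(\varphi)$ is regime-independent. You would still need routine care, for instance applying Lemma~\ref{theorem:chabei} with the random $\mathscr F_{\zeta_n}$-measurable initial segment $X_{\zeta_n}$ and the shifted Brownian motion.
\item The notion of solution is changed, either to c\`adl\`ag segments or to the equation with the extra jump term above.
\end{itemize}
The paper's omitted proof does not address this point either. The cited argument concerns non-neutral switching diffusions, where no regime-dependent term sits inside the differential.
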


	\section{Markovian switching in a finite state space}\label{sec:4}
	In this section, we consider the exponential ergodicity for the RNSFDE $\left(X_t, \Lambda(t)\right)$ with $N<\infty$.   To this aim, we first need to introduce an auxiliary process $Y^{\xi, i}(t):=X^{\xi, i}(t)-G(X_t^{\xi, i},\Lambda^{i}(t))$.  In what follows, 
	let's briefly introduce our idea of the proof of Theorem \ref{eq:boziteng} . First, we establish the uniform boundedness of the auxiliary process $(Y(t))$ and the convergence for two processes $(Y^{\xi, i}(t))$ and $(Y^{\eta, i}(t))$ from different initial data in the sense of $L^p(\Omega;\mathbb{R}^d)$. Then, we further generalize  these results to the segment process $(X_t)$. Finally, we obtain the exponential  ergodicity  under Wasserstein distance based on the above results.

	Before we proceed, we give some warm-up notions. For any $k\in\S$, set
	\begin{equation}\label{eq:07gggh}
		f(k):=(\bar{\beta}+\gamma_p)\left(p-2+\frac{2\delta_{pr-p\underline{\alpha}}(\rho)}{(1-\kappa_2)^{p}}\right)+p\alpha(k),
	\end{equation}
	where $\gamma_p=(p-1)/2$, $\kappa_2:=\kappa\delta_{pr-p\underline{\alpha}}^{\frac{1}{p}}(\rho) $,  $\underline{\alpha}:=min_{i\in\S}\alpha(i)$, \( \alpha(k),\gamma \), and \( \rho \) are introduced in (\hyperlink{A0}{A0})-(\hyperlink{A2}{A2}). Moreover, we introduce the quantities associated to \( f(k)\), \( k \in \S \). Set
	\[
	\widehat{Q}:=Q+\operatorname{diag}(f(1), f(2), \ldots, f(N)), \quad \zeta:=-\max _{s \in \operatorname{spec}(\widehat{Q})} \operatorname{Re}(s),
	\]
	where \( \operatorname{diag}(f(1), f(2), \ldots, f(N)) \) denotes the diagonal matrix generated by  \( (f(1), f(2), \ldots, f(N))^{\top};\)   \(\operatorname{spec}(\widehat{Q}) \) is the spectrum of \( \widehat{Q} \) and \( \operatorname{Re}(\lambda) \) is the real part of \( \lambda \). 
	
	By virtue of \cite[Proposition 4.1]{bardet2010long}, we now state a useful lemma which relates \( \zeta \) with the exponential functional of \( f(\Lambda(t)) \) along the trajectories of \( \Lambda(t) \).
	\begin{lemma}\label{eq:wuwei}
		There exist constants \( 0<c_{1}<c_{2}< \) \( \infty \) such that for any \( i \in \S \) and \( 0 \leq  u<t \),
		\begin{equation}\label{Eq:1210}
			c_{1} \mathrm{e}^{-\zeta(t-u)} \leq \mathbb{E}_{\mathbb{P}_2}\left[\exp \left(\int_{u}^{t} f\left(\Lambda^{i}(v)\right) \mathrm{d} v\right)\right] \leq c_{2} \mathrm{e}^{-\zeta(t-u)},
		\end{equation}
		where $\mathbb{E}_{\mathbb{P}_2}$ denotes the expectation with respect to $\mathbb{P}_2$.
	\end{lemma}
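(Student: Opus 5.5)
The plan is a Feynman--Kac representation combined with Perron--Frobenius theory for the finite matrix $\widehat{Q}$.

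\textbf{Step 1: Feynman--Kac representation.} For $t\ge 0$ define
\[
u_j(t):=\mathbb{E}_{\mathbb{P}_2}\Big[\exp\Big(\int_0^t f(\Lambda^{j}(v))\,\mathrm{d}v\Big)\Big],\qquad j\in\S .
\]
By the Markov property and time homogeneity of $(\Lambda(t))$, conditioning on $\mathscr{F}^{(2)}_u$ gives
\[
\mathbb{E}_{\mathbb{P}_2}\Big[\exp\Big(\int_u^t f(\Lambda^{i}(v))\,\mathrm{d}v\Big)\Big]=\sum_{j\in\S}\mathbb{P}_2(\Lambda^i(u)=j)\,u_j(t-u).
\]
So it suffices to bound $u_j(s)$ above and below by constant multiples of $\mathrm{e}^{-\zeta s}$, uniformly in $j\in\S$. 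Such bounds pass through the convex combination above.

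Since $\S$ is finite and $f$ is bounded, $u(t)=(u_1(t),\dots,u_N(t))^\top$ solves the backward Kolmogorov (Feynman--Kac) system $u'(t)=\widehat{Q}u(t)$ with $u(0)=\vec{1}$. Hence
\[
u(t)=\mathrm{e}^{t\widehat{Q}}\vec{1}.
\]

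\textbf{Step 2: Perron--Frobenius.} The matrix $\widehat{Q}+cI$ is nonnegative for $c$ large. It is irreducible because $Q$ is irreducible, and then $\mathrm{e}^{t\widehat{Q}}$ has strictly positive entries for every $t>0$. By Perron--Frobenius, the eigenvalue of $\widehat{Q}$ with maximal real part is real and simple, and equals $-\zeta$. It has a strictly positive right eigenvector $v$, which we normalize so that $\min_j v_j=1$, and we set $K:=\max_j v_j$.

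\textbf{Step 3: comparison.} Since $v\le \vec{1}\le v$ fails in general, compare instead with scaled vectors: $v/K\le \vec{1}\le v$ componentwise. Positivity of $\mathrm{e}^{t\widehat{Q}}$ preserves componentwise order, so
\[
\frac{1}{K}\,\mathrm{e}^{-\zeta t}v=\frac{1}{K}\,\mathrm{e}^{t\widehat{Q}}v\le \mathrm{e}^{t\widehat{Q}}\vec{1}\le \mathrm{e}^{t\widehat{Q}}v=\mathrm{e}^{-\zeta t}v .
\]
This yields
\[
K^{-1}\mathrm{e}^{-\zeta t}\le u_j(t)\le K\,\mathrm{e}^{-\zeta t}\qquad\text{for all } j\in\S,\ t\ge 0 .
\]
We then take $c_1<K^{-1}$ and $c_2=K$, shrinking $c_1$ slightly if $K=1$ so that the required strict inequality $c_1<c_2$ holds. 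Combining with Step 1 gives \eqref{Eq:1210}. This is exactly the argument behind \cite[Proposition 4.1]{bardet2010long}.

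\textbf{Main obstacle.} The delicate point is Step 2: identifying $\zeta$ with the Perron root, i.e.\ showing that the spectral abscissa of $\widehat{Q}$ is attained at a real simple eigenvalue with a strictly positive eigenvector. This needs irreducibility of the Metzler matrix $\widehat{Q}$, which is inherited from the irreducibility of $Q$ because adding a diagonal matrix does not change the off-diagonal pattern. Finiteness of $\S$ is essential here, since it makes $K<\infty$; this is why the lemma fails in the infinite case treated later. Justifying $u'=\widehat{Q}u$ is routine: condition on the first jump time, or apply Dynkin's formula to $\exp\big(\int_0^t f(\Lambda(v))\,\mathrm{d}v\big)\,g(\Lambda(t))$.
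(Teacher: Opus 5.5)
Your proof is correct and takes essentially the same approach as the paper. The paper gives no argument of its own and simply invokes \cite[Proposition 4.1]{bardet2010long}, whose proof is exactly your combination: the Feynman--Kac identity $u(t)=\mathrm{e}^{t\widehat{Q}}\vec{1}$, Perron--Frobenius for the irreducible Metzler matrix $\widehat{Q}$, the sandwich $v/K\le\vec{1}\le v$, and the Markov-property reduction at time $u$ to handle $0\le u<t$ uniformly in the initial state.
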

	
	\subsection{Boundedness and Convergence  of $Y(t)$}
	\begin{lemma}\label{thm:opao1}
		Let $p\geq 2$ and  assumptions (\hyperlink{A0}{A0})-(\hyperlink{A2}{A2})  hold. Suppose further
		that  \( \underline{\alpha}<0 \) , \( \zeta>0 \)  and $\kappa_2 \in (0,1)$. Then there exists a constant $C>0$  such that for any   $(\xi,i)\in E$ and  $t\geq 0 $,
		\begin{equation}\label{eq:aaa}
			\mbb{E}\left|Y^{\xi, i}(t)\right|^{p} \leq C(1+ \|\xi\|_{r}^{p}).
		\end{equation}      
	\end{lemma}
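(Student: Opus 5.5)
We prove the bound by conditioning on the path of the switching component and applying Itô's formula to $|Y(t)|^p$ with a random exponential weight. Fix a path $\omega_2$ of $\Lambda^i$. By the product structure of $(\Omega,\mathscr F,\mathbb P)$, the process $Y(t)=Y^{\xi,i}(t)$ is then driven by $W$ with deterministic, piecewise constant coefficients $b(\cdot,\Lambda(t))$ and $\sigma(\cdot,\Lambda(t))$. We apply Itô's formula to $\exp\bigl(-\int_0^t f(\Lambda(v))\,\mathrm dv\bigr)|Y(t)|^p$ under $\mathbb P_1$, and integrate over $\mathbb P_2$ only at the very end, where Lemma \ref{eq:wuwei} converts the exponential functionals into $\mathrm e^{-\zeta(t-u)}$.

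The first step is the pointwise drift estimate. Take $\psi=\mathbf 0$ in (\hyperlink{A1}{A1}) and (\hyperlink{A2}{A2}), and use $G(\mathbf 0,l)=\vec 0$ together with Young's inequality to absorb the constant terms $b(\mathbf 0,l)$ and $\sigma(\mathbf 0,l)$ (finitely many states, so they are uniformly bounded). This gives
\begin{equation*}
p|Y|^{p-2}\langle Y,b(X_t,\Lambda)\rangle+\tfrac{p(p-1)}2|Y|^{p-2}\|\sigma(X_t,\Lambda)\|_{\rm HS}^2
\le \bigl(p\alpha(\Lambda)+\varepsilon\bigr)|Y|^p+(\bar\beta+\gamma_p)\Bigl((p-2)|Y|^p+2\Bigl(\int_{-\infty}^0|X(t+\theta)|^2\rho(\mathrm d\theta)\Bigr)^{p/2}\Bigr)+C_\varepsilon .
\end{equation*}
Here $|Y|^{p-2}\int|X|^2\,\mathrm d\rho$ is handled by Young's inequality with exponents $p/(p-2)$ and $p/2$, and Jensen's inequality then bounds $\bigl(\int|X|^2\,\mathrm d\rho\bigr)^{p/2}$ by $\int|X|^p\,\mathrm d\rho$. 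The constants are arranged so that the coefficient structure matches $f$ in \eqref{eq:07gggh}; the factor $\varepsilon$ can be absorbed since $\zeta>0$ leaves room (replace $f$ by $f+\varepsilon$ and note that $\zeta$ depends continuously on the diagonal).

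The second step bounds $X$ in terms of $Y$. We have $|X(s)|\le |Y(s)|+|G(X_s,\Lambda(s))|$, and (\hyperlink{A0}{A0}) gives $|G|^p\le\kappa^p\int|X(s+\theta)|^p\rho(\mathrm d\theta)$. Applying \eqref{eq:yao} with $\varepsilon'=\kappa_2$ gives
\begin{equation*}
|X(s)|^p\le (1-\kappa_2)^{1-p}|Y(s)|^p+\kappa_2^{1-p}\kappa^p\int|X(s+\theta)|^p\rho(\mathrm d\theta).
\end{equation*}
We then integrate against the weight $\mathrm e^{-\int_s^t f}$. Since $f\ge p\underline\alpha+\text{const}$, the backward shift by $\theta$ costs at most a factor $\mathrm e^{-p\underline\alpha\theta}$ combined with $\mathrm e^{-pr\theta}$. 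This is why $\delta_{pr-p\underline\alpha}(\rho)$ appears, and why $\underline\alpha<0$ is assumed. Using Fubini's theorem and the substitution from \eqref{eq:zhuyaoanother}, the delay integrals split into an initial part bounded by $C\|\xi\|_r^p$ and a part $\delta_{pr-p\underline\alpha}(\rho)\int_0^t(\cdot)|X(s)|^p\,\mathrm ds$. Iterating this inequality (a geometric series in $\kappa_2<1$) yields
\begin{equation*}
\int_0^t w(s)|X(s)|^p\,\mathrm ds\le (1-\kappa_2)^{-p}\int_0^t w(s)|Y(s)|^p\,\mathrm ds+C\|\xi\|_r^p .
\end{equation*}

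The third step closes the estimate. Taking $\mathbb E_{\mathbb P_1}$ removes the martingale part. The term $\frac{2\delta(\rho)(\bar\beta+\gamma_p)}{(1-\kappa_2)^p}|Y|^p$ is exactly absorbed by the corresponding part of $f$, which leaves
\begin{equation*}
\mathbb E_{\mathbb P_1}|Y(t)|^p\le \mathrm e^{\int_0^t f}\,|Y(0)|^p+C\|\xi\|_r^p\,(\cdots)+C\int_0^t \mathrm e^{\int_u^t f}\,\mathrm du .
\end{equation*}
Taking $\mathbb E_{\mathbb P_2}$ and applying Lemma \ref{eq:wuwei} bounds the right-hand side by $c_2\mathrm e^{-\zeta t}|Y(0)|^p+C\|\xi\|_r^p+Cc_2/\zeta$. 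Finally, $|Y(0)|\le(1+\kappa_1)\|\xi\|_r$ gives \eqref{eq:aaa}.

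The main obstacle is the second step. The weight $\exp(\int_s^t f(\Lambda))$ is path dependent, so it does not commute cleanly with the shift $s\mapsto s+\theta$ in the delay integrals. I would first handle this with the crude bound $\int_{s+\theta}^{s} f\ge p\underline\alpha\,|\theta|$ up to constants, which is where $\delta_{pr-p\underline\alpha}$ comes from. A second difficulty is that Lemma \ref{eq:wuwei} must be applied to the product of random weights. To avoid this, I would perform the Gronwall-type closing at the level of $\mathbb P_1$-expectations for each fixed path of $\Lambda$, and take $\mathbb E_{\mathbb P_2}$ only at the last step, so that only single exponential functionals appear.
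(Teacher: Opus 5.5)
Your proposal is correct and follows the paper's proof in all essentials. Both arguments fix $\omega_2$ and apply It\^o's formula to a path-dependent exponential weight times $|Y|^p$. Both handle the backward shift in the delay integrals by Fubini and the crude lower bound through $\underline{\alpha}$, close the neutral term with \eqref{eq:yao} at $\varepsilon'=\kappa_2$, and only at the end integrate over $\mathbb{P}_2$ using Lemma \ref{eq:wuwei}.

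The only organizational difference is the choice of weight. You put all of $f$ (plus a small $\varepsilon$) into the weight, so the $|Y|^p$-terms cancel outright. The paper instead uses $\mathrm{e}^{\int_0^t(\lambda-p\alpha(\Lambda^i(u)))\,\mathrm{d}u}$ with $\lambda\in(0,pr)$ and recovers the factor $\mathrm{e}^{C_2(\varepsilon)(t-u)+\int_u^t f(\Lambda^i(v))\,\mathrm{d}v}$ through Gronwall's inequality.

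Three small repairs are needed:
\begin{enumerate}
\item Localize with $\tau'_n$ before dropping the martingale term.
\item In Step 2 the signs are flipped: the weight should be $\mathrm{e}^{\int_s^t f}$, and the shift cost is $\mathrm{e}^{p\underline{\alpha}\theta}$, not $\mathrm{e}^{-p\underline{\alpha}\theta}$.
\item The initial-segment term is $\delta_{pr}(\rho)\|\xi\|_r^p\int_0^t \mathrm{e}^{\int_s^t f}\mathrm{e}^{-prs}\,\mathrm{d}s$. This is bounded by $C\|\xi\|_r^p$ only after taking $\mathbb{E}_{\mathbb{P}_2}$, not pathwise as Step 2 states.
\end{enumerate}
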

	\begin{proof}
		First, for any $\varepsilon_1,\varepsilon_2>0$, it follows from  (\hyperlink{A1}{A1}) and (\hyperlink{A2}{A2}) that for all $(\varphi,k) \in  E$, 
		\[
		\begin{aligned}
			\langle\varphi(0) & -G(\varphi, k), b(\varphi, k)\rangle+\frac{p-1}{2}\|\sigma(\varphi, k)\|_{\rm H S}^{2} \\
			&\leq  \langle\varphi(0)-0+G(\textbf{0}, k)-G(\varphi, k), b(\varphi, k)-b(\textbf{0}, k)\rangle+\langle\varphi(0)-G(\varphi, k), b(\textbf{0}, k)\rangle \\
			& \quad+\frac{p-1}{2} \| \sigma ( \varphi, k)-\sigma(\textbf{0}, k)+\sigma(\textbf{0}, k) \|_{\rm H S}^{2} \\
			& \leq  \alpha(k)|\varphi(0)-G(\varphi, k)|^{2}+\beta(k) \int_{-\infty}^{0}|\varphi(\theta)|^{2}\rho(\d \theta)+\frac{\varepsilon}{2}|\varphi(0)-G( \varphi, k)|^{2}+\frac{1}{2 \varepsilon_{1}}|b(\textbf{0}, k)|^{2} \\
			& \quad+\frac{p-1}{2}\left[\left(1+\varepsilon_{2}\right)\|\sigma(\varphi, k)-\sigma(\textbf{0}, k)\|_{\rm H S}^{2}+\left(1+\frac{1}{\varepsilon_{2}}\right)\|\sigma(\textbf{0}, k)\|_{\rm H S}^{2}\right] \\
			&\leq  \left(\alpha(k)+\frac{\varepsilon_1}{2}\right)|\varphi(0)-G(\varphi, k)|^{2}+\left(\beta(k)+\gamma_p+\varepsilon_{2} \gamma_p\right) \int_{-\infty}^{0}|\varphi(\theta)|^{2} \rho(\d \theta) \\
			&\quad +\frac{1}{2 \varepsilon_{1}} | b(\textbf{0}, k)|^{2}+\frac{p-1}{2}\left(1+\frac{1}{\varepsilon_{2}}\right)\|\sigma(\textbf{0}, k)\|_{\rm H S}^{2}.
		\end{aligned}
		\]
		For any $\varepsilon>0$, letting \( \varepsilon_{1}=2 \varepsilon, \varepsilon_{2}=\varepsilon/\gamma_p \), we have
		\begin{equation}\label{eq:qianya}
			\begin{aligned}
				&\langle\varphi(0)-G\left(\varphi, k\right), b(\varphi, k)\rangle+\frac{p-1}{2}\left\|\sigma\left(\varphi,k\right)\right\|_{\rm H S}^{2} \\
				&\quad\leq C_{1}(\varepsilon)+(\alpha(k)+\varepsilon)|\varphi(0)-G(\varphi, k)|^{2}+(\beta(k)+\gamma_p+\varepsilon) \int_{-\infty}^{0}|\varphi(\theta)|^{2} \rho(\d \theta),
			\end{aligned}
		\end{equation}
		where
		\[
		C_{1}(\varepsilon)=\frac{1}{4\varepsilon} \max _{1\leq  k\leq  N}|b(\textbf{0}, k)|^{2}+\frac{p-1}{2}\left(1+\frac{\gamma_p}{\varepsilon}\right)\max _{1\leq  k\leq  N}\|\sigma(\textbf{0}, k)\|_{\rm H S}^{2}.
		\]
		
		Next, for every integer $n\geq 1$, define the stopping time 
		\begin{equation}\label{Eq:12201}
			\tau^{\prime}_n:=\inf\{t\geq 0:|Y^{\xi,i}(t)|\geq n\}.
		\end{equation}
	Moreover, there exists a transtion function $P(\omega_{2},\d \omega_{1})$ on $\Omega_2\times\mathscr{B}\left(\Omega_{1}\right)$ such that
		$ \mathbb{P}(\d\omega_{1},\d\omega_{2})= \mathbb{P}_2(\d\omega_{2})P(\omega_{2},\d \omega_{1})$. In the sequel, fix $\omega_{2}\in\Omega_{2}$ and take $\lambda\in (0,pr)$.  Set $$I(t):= \mathbb{E}_{P}\left(\e^{\int_{0}^{t}(\lambda-p\alpha(\Lambda^i(u)))\d u}|Y^{\xi,i}(t)|^{p}  \right),$$ where $\mathbb{E}_{P}$ denotes the expectation with respect to the transition function $P(\omega_{2},\d \omega_{1})$. 
		Moreover, applying the  Itô formula to
		$
		\e^{\int_{0}^{t}(\lambda-p\alpha(\Lambda^i(u)))\mrm du}|Y^{\xi,i}(t)|^p
		$
		and taking expectation with respect to $P$, we derive from \eqref{eq:qianya} that for any $\varepsilon>0$,
		\begin{equation*}
			\begin{aligned}
				I(t\wedge \tau^{\prime}_n)
				&= |Y^{\xi,i}(0)|^{p}+\mathbb{E}_{P} \int_{0}^{t\wedge \tau^{\prime}_n} \e^{\int_{0}^{u}(\lambda-p\alpha(\Lambda^i(v)))\d v }\bigg(\big(\lambda-p\alpha(\Lambda^i(u))\big)|Y^{\xi,i}(u)|^{p} \\
				&\quad+p|Y^{\xi,i}(u)|^{p-2}\langle Y^{\xi,i}(u),b(X^{\xi,i}_u,\Lambda^{i}(u))\rangle+\frac{p}{2}|Y^{\xi,i}(u)|^{p-2}\|\sigma(X^{\xi,i}_u,\Lambda^{i}(u))\|_{\text{HS}}^2\\
				&\quad+\frac{p(p-2)}{2}|Y^{\xi,i}(u)|^{p-4}|(Y^{\xi,i}(u))^{\top}\sigma(X^{\xi,i}_u,\Lambda^{i}(u))|^2 \bigg) \mrm d u\\
				&\leq  |Y^{\xi,i}(0)|^{p}+\mathbb{E}_{P} \int_{0}^{t\wedge \tau^{\prime}_n} \e^{\int_{0}^{u}(\lambda-p\alpha(\Lambda^i(v)))\d v }\bigg(\big(\lambda-p\alpha(\Lambda^i(u))\big)|Y^{\xi,i}(u)|^{p} \\
				&\quad+p|Y^{\xi,i}(u)|^{p-2}\Big(\langle Y^{\xi,i}(u),b(X^{\xi,i}_u,\Lambda^{i}(u))\rangle
				+\frac{p-1}{2}\|\sigma(X^{\xi,i}_u,\Lambda^{i}(u))\|_{\text{HS}}^2\Big)\bigg)\d u\\
				&\leq  |Y^{\xi,i}(0)|^{p}+(\lambda+p\varepsilon)\mathbb{E}_{P} \int_{0}^{t\wedge \tau^{\prime}_n} \e^{\int_{0}^{u}(\lambda-p\alpha(\Lambda^i(v)))\d v }|Y^{\xi,i}(u)|^{p}\d u \\
				&\quad+pC_1{\varepsilon}\mathbb{E}_{P} \int_{0}^{t\wedge \tau^{\prime}_n} \e^{\int_{0}^{u}(\lambda-p\alpha(\Lambda^i(v)))\d v }|Y^{\xi,i}(u)|^{p-2}\d u\\
				&\quad+p(\bar{\beta}+\gamma_p+\varepsilon)\mathbb{E}_{P} \int_{0}^{t\wedge \tau^{\prime}_n}\int_{-\infty}^0 \e^{\int_{0}^{u}(\lambda-p\alpha(\Lambda^i(v)))\d v }|Y^{\xi,i}(u)|^{p-2}|X^{\xi,i}(u+\theta)|^{2}\rho(\d\theta)\d u\\
				&=:\sum_{i=1}^4 I_i
			\end{aligned}
		\end{equation*}
		Now let us estimate these main items above one by one. For $I_1$, by \eqref{eq:xiangai} and \eqref{eq:jianbang}, we are able to obtain
		\begin{equation*}
			\begin{aligned}
				I_1&\leq \left(1+\varepsilon^{\frac{1}{p-1}}\right)^{p-1}\left(|\xi(0)|^p+\frac{1}{\varepsilon}G(\xi,i)|^p\right)
				\leq \left(1+\varepsilon^{\frac{1}{p-1}}\right)^{p-1}\left(|\xi(0)|^p+\frac{\kappa_1^p}{\varepsilon}\|\xi\|_r^p\right).
			\end{aligned}
		\end{equation*}
		Letting $\varepsilon=\kappa_1^{p-1}$ implies
		\begin{equation*}
			I_1\leq \big(1+\kappa_1\big)^p \|\xi\|_r^p\leq 2^p\|\xi\|_r^p.
		\end{equation*}
		For $I_3$,   the Young inequality  yields for any $\varepsilon>0$ and $a\in \mathbb{R}$,
		\[
		|a|^{p-2}=\varepsilon^{\frac{2-p}{p}}(\varepsilon |a|^p)^{\frac{p-2}{p}}\leq \frac{2}{p}\varepsilon^{\frac{2-p}{2}}+\frac{p-2}{p}\varepsilon|a|^p,
		\]
		which implies
		\begin{equation*}
			\begin{aligned}
				I_3&\leq C_{p,\varepsilon}\mathbb{E}_{P} \int_{0}^{t\wedge \tau^{\prime}_n} \e^{\int_{0}^{u}(\lambda-p\alpha(\Lambda^i(v)))\d v }\d u +(p-2)C_1(\varepsilon)\varepsilon\mathbb{E}_{P} \int_{0}^{t\wedge \tau^{\prime}_n} \e^{\int_{0}^{u}(\lambda-p\alpha(\Lambda^i(v)))\d v }|Y^{\xi,i}(u)|^{p}\d u
			\end{aligned}
		\end{equation*} 
		For $I_4$, when $p>2$, by the Young inequality, we have
		\begin{equation}\label{eq:xiuxi1}
			\begin{aligned}
				I_4&\leq (p-2)(\bar{\beta}+\gamma_p+\varepsilon)\mathbb{E}_{P} \int_{0}^{t\wedge \tau^{\prime}_n} \e^{\int_{0}^{u}(\lambda-p\alpha(\Lambda^i(v)))\d v }|Y^{\xi,i}(u)|^{p}\d u \\
				&\quad+2(\bar{\beta}+\gamma_p+\varepsilon)\mathbb{E}_{P} \int_{0}^{t\wedge \tau^{\prime}_n}\int_{-\infty}^0 \e^{\int_{0}^{u}(\lambda-p\alpha(\Lambda^i(v)))\d v }|X^{\xi,i}(u+\theta)|^{p}\rho(\d\theta)\d u .
			\end{aligned}
		\end{equation}
		Hence, \eqref{eq:xiuxi1} holds  for any $p\geq 2$. Note that $\lambda\in (0,pr)$. By the Fubini theorem, \eqref{eq:yao} and (\hyperlink{A0}{A0}), we deduce that for any $\varepsilon\in(0,1)$, 
		\begin{equation}\label{eq:haibuzouma1}
			\begin{aligned}
				\int_{0}^{t\wedge \tau^{\prime}_n}& \int_{-\infty}^0\e^{\int_{0}^{u}(\lambda-p\alpha(\Lambda^i(v)))\d v }|X^{\xi,i}(u+\theta)|^{p} \rho(\d\theta)\d u\\
				&=\int_{0}^{t\wedge \tau^{\prime}_n}\int_{-\infty}^{-u} \e^{-p\int_{0}^{u}\alpha(\Lambda^i(v))\d v} \e^{(\lambda-pr)u}\e^{-p r\theta} \e^{p r(u+\theta)}|X^{\xi,i}(u+\theta)|^{p} \rho(\mathrm{d} \theta) \mathrm{d} u\\
				&\quad+\int_{-{t\wedge \tau^{\prime}_n}}^{0}\int_{0}^{{t\wedge \tau^{\prime}_n}+\theta}\e^{\int_{u}^{u-\theta}(\lambda-p\alpha(\Lambda^i(v)))\d v} \e^{\int_{0}^{u}(\lambda-p\alpha(\Lambda^i(v)))\d v}|X^{\xi,i}(u)|^{p} \mathrm{d} u \rho(\mathrm{d} \theta) \\
				&\leq \delta_{pr}(\rho)\|\xi\|_r^p\int_{0}^{t\wedge \tau^{\prime}_n} \e^{-p\int_{0}^{u}\alpha(\Lambda^i(v))\d v} \d u
				+\delta_{pr-p\underline{\alpha}}(\rho)\int_{0}^{t\wedge \tau^{\prime}_n} \e^{\int_{0}^{u}(\lambda-p\alpha(\Lambda^i(v)))\d v }|X^{\xi,i}(u)|^{p} \d u\\
				&\leq  \delta_{pr}(\rho)\|\xi\|_r^p\int_{0}^{t\wedge \tau^{\prime}_n} \e^{-p\int_{0}^{u}\alpha(\Lambda^i(v))\d v} \d u+\frac{\delta_{pr-p\underline{\alpha}}(\rho)}{(1-\varepsilon)^{p-1}}\int_{0}^{t\wedge \tau^{\prime}_n} \e^{\int_{0}^{u}(\lambda-p\alpha(\Lambda^i(v)))\d v }|Y^{\xi,i}(u)|^{p} \d u\\
				&\quad+\frac{\delta_{pr-p\underline{\alpha}}(\rho)}{\varepsilon^{p-1}}\int_{0}^{t\wedge \tau^{\prime}_n} \e^{\int_{0}^{u}(\lambda-p\alpha(\Lambda^i(v)))\d v }|G(X^{\xi,i}_u,\Lambda^i(u))|^{p} \d u\\
				&\leq \delta_{pr}(\rho)\|\xi\|_r^p\int_{0}^{t\wedge \tau^{\prime}_n} \e^{-p\int_{0}^{u}\alpha(\Lambda^i(v))\d v} \d u+ \frac{\delta_{pr-p\underline{\alpha}}(\rho)}{(1-\varepsilon)^{p-1}}\int_{0}^{t\wedge \tau^{\prime}_n} \e^{\int_{0}^{u}(\lambda-p\alpha(\Lambda^i(v)))\d v }|Y^{\xi,i}(u)|^{p} \d u\\
				&\quad+\frac{\kappa^p\delta_{pr-p\underline{\alpha}}(\rho)}{\varepsilon^{p-1}}\int_{0}^{t\wedge \tau^{\prime}_n} \int_{-\infty}^0\e^{\int_{0}^{u}(\lambda-p\alpha(\Lambda^i(v)))\d v. }|X^{\xi,i}(u+\theta)|^{p} \rho(\d\theta)\d u.
			\end{aligned}
		\end{equation}
		Choosing $\varepsilon=\kappa_2\in (0,1)$ implies 
		\begin{equation}\label{eq:xingba1}
			\begin{aligned}
				\int_{0}^{t\wedge \tau^{\prime}_n} &\int_{-\infty}^0\e^{\int_{0}^{u}(\lambda-p\alpha(\Lambda^i(v)))\d v }|X^{\xi,i}(u+\theta)|^{p} \rho(\d\theta)\d u\\
				&\leq \frac{\delta_{pr}(\rho)}{1-\kappa_2} \|\xi\|_r^p\int_0^{t\wedge \tau^{\prime}_n}\e^{-p\int_{0}^{u}\alpha(\Lambda^i(v))\d v} \d u
				+\frac{\delta_{pr-p\underline{\alpha}}(\rho)}{(1-\kappa_2)^{p}}\int_{0}^{t\wedge \tau^{\prime}_n} \e^{\int_{0}^{u}(\lambda-p\alpha(\Lambda^i(v)))\d v }|Y^{\xi,i}(u)|^{p} \d u.
			\end{aligned}
		\end{equation}
		Inserting  \eqref{eq:xingba1} into \eqref{eq:xiuxi1}, we have
		\begin{equation}\label{eq:mubiao}
			\begin{aligned}
				I_4 &\leq 2(\bar{\beta}+\gamma_p+\varepsilon)\frac{\delta_{pr}(\rho)}{1-\kappa_2} \|\xi\|_r^p\mathbb{E}_{P} \int_0^{t\wedge \tau^{\prime}_n}\e^{-p\int_{0}^{u}\alpha(\Lambda^i(v))\d v} \d u\\
				&\quad	+(\bar{\beta}+\gamma_p+\varepsilon)\left(p-2+\frac{2\delta_{pr-p\underline{\alpha}}(\rho)}{(1-\kappa_2)^{p}}\right)\mathbb{E}_{P} \int_{0}^{t\wedge \tau^{\prime}_n} \e^{\int_{0}^{u}(\lambda-p\alpha(\Lambda^i(v)))\d v }|Y^{\xi,i}(u)|^{p} \d u.
			\end{aligned}
		\end{equation}
		Combining the above calculations leads to
		\begin{equation*}
			\begin{aligned}
				I(t\wedge \tau^{\prime}_n)&\leq C_{p,\varepsilon}\|\xi\|_r^p\left(1+  \int_0^{t}\e^{-p\int_{0}^{u}\alpha(\Lambda^i(v))\d v} \d u\right)+C_{p,\varepsilon} \int_{0}^{t} \e^{\int_{0}^{u}(\lambda-p\alpha(\Lambda^i(v)))\d v }\d u\\
				&\quad +C(p,\varepsilon,\lambda) \int_{0}^{t} \mathbb{E}_{P}\left(\e^{\int_{0}^{u\wedge \tau^{\prime}_n}(\lambda-p\alpha(\Lambda^i(v)))\d v }|Y^{\xi,i}(u\wedge \tau^{\prime}_n)|^{p} \right)\d u,
			\end{aligned}
		\end{equation*}
		where
		\[
		\begin{aligned}
			C(p,\varepsilon,\lambda)&=\lambda+(\bar{\beta}+\gamma_p)\Big(p-2+\frac{2\delta_{pr-p\underline{\alpha}}(\rho)}{(1-\kappa_2)^{p}}\Big)+C_2(\varepsilon),\\
			C_2(\varepsilon)&=(p-2)(C_1(\varepsilon)+1)\varepsilon+\left(p+\frac{2\delta_{pr-p\underline{\alpha}}(\rho)}{(1-\kappa_2)^{p}}\right)\varepsilon.
		\end{aligned}
		\]
		Then, by means of the Gronwall inequality (see, e.g., \cite[Lemma A.31]{Yin2010} ), we obtain
		\begin{equation}\label{Eq:1220}
			\begin{aligned}
				I(t\wedge \tau^{\prime}_n)&\leq  C_{p,\varepsilon}\|\xi\|_r^p\e^{C(p,\varepsilon,\lambda)t}\\
				&\quad+C_{p,\varepsilon}\int_0^t\e^{C(p,\varepsilon,\lambda)(t-u)} \left(\|\xi\|_r^p\e^{-p\int_{0}^{u}\alpha(\Lambda^i(v))\d v}+\e^{\int_{0}^{u}(\lambda-p\alpha(\Lambda^i(v)))\d v }\right)\d u.
			\end{aligned}
		\end{equation}
		By  the Markov inequality,  
		\begin{equation*}
			\begin{aligned}
				P\{\omega_{2},\tau_n^{\prime}<t\}&\leq P\left\{\omega_{2},|Y(t\wedge\tau_n^{\prime})|\geq n\right\}\\
				&= P\left\{\omega_{2},\e^{\int_{0}^{t}(\lambda-p\alpha(\Lambda^i(u)))\mrm du}|Y(t\wedge\tau_n^{\prime})|^p\geq \e^{\int_{0}^{t}(\lambda-p\alpha(\Lambda^i(u)))\mrm du}n^p\right\}\\
				&\leq \frac{I(t\wedge\tau_n^{\prime})}{\e^{\int_{0}^{t}(\lambda-p\alpha(\Lambda^i(u)))\mrm du}n^p}.
			\end{aligned}
		\end{equation*}
		It follows from \eqref{Eq:1220}  that   $P\{\omega_{2},\tau_n^{\prime}<t\}\to 0$ as $n\to \infty$.	 Letting $n\to \infty$ in \eqref{Eq:1220} and using Fatou's lemma, we have
		\begin{equation*}
			\begin{aligned}
				\mathbb{E}_{P}|Y^{\xi,i}(t)|^{p}&\leq C_{p,\varepsilon}\|\xi\|_r^p\left(\e^{C_2(\varepsilon)t}\e^{\int_0^tf(\Lambda^i(u))\d u}+\int_0^t \e^{C_2(\varepsilon)(t-u)-\lambda u}\e^{\int_u^tf(\Lambda^i(u))\d v}\d u\right)\\
				& \quad+C_{p,\varepsilon}\int_0^t \e^{C_2(\varepsilon)(t-u)}\e^{\int_u^tf(\Lambda^i(v))\d v}\d u,
			\end{aligned}
		\end{equation*}
		where $f(\cdot)$ is defined in \eqref{eq:07gggh}. Taking expectation with respect to $\P_2$, we have
		\begin{equation*}
			\begin{aligned}
				\mbb E|Y^{\xi,i}(t)|^{p}&\leq C_{p,\varepsilon}\|\xi\|_r^p\left(\e^{C_2(\varepsilon)t}\E_{\mathbb{P}_2}\e^{\int_0^tf(\Lambda^i(u))\d u}+\int_0^t \e^{C_2(\varepsilon)(t-u)-\lambda u}\E_{\mathbb{P}_2}\e^{\int_u^tf(\Lambda^i(u))\d v}\d u\right)\\
				& \quad+C_{p,\varepsilon}\int_0^t \e^{C_2(\varepsilon)(t-u)}\E_{\mathbb{P}_2}\e^{\int_u^tf(\Lambda^i(v))\d v}\d u.
			\end{aligned}
		\end{equation*}
		Thus, letting $\varepsilon$ be sufficiently small such that $C_2(\varepsilon)<\zeta$ , we have
		\begin{equation*}
			\begin{aligned}
				\mbb E|Y^{\xi,i}(t)|^{p}&\leq C_{p,\varepsilon}\|\xi\|_r^p\left(\e^{C_2(\varepsilon)t}\e^{-\zeta t}+\int_0^t \e^{C_2(\varepsilon)(t-u)-\lambda u}\e^{-\zeta(t-u)}\d u\right)\\
				& \quad+C_{p,\varepsilon}\int_0^t \e^{C_2(\varepsilon)(t-u)}\e^{-\zeta(t-u)}\d u\\
				&  \leq C_{p,\varepsilon}(1+\|\xi\|_r^p)
			\end{aligned}
		\end{equation*}
		where the first inequality has used Lemma \ref{eq:wuwei}. The proof is complete.
	\end{proof}
	
	\begin{lemma}\label{lem:xiuxi8}
		Suppose that assumptions of Lemma \ref{thm:opao1} hold. Then there exist constants $C$, $\lambda>0$  such that for any   $\xi$, $\eta \in  C_r$,  $i \in \mbb S $, and  $t\geq 0 $,
		\begin{equation}\label{eq:bushi1}
			\mbb{E}\left|Y^{\xi, i}(t)-Y^{\eta, i}(t)\right|^{p} \leq C \|\xi-\eta\|_{r}^{p}\e^{-\lambda t}.
		\end{equation}      
	\end{lemma}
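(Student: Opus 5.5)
The plan is to run the argument of Lemma \ref{thm:opao1} again, now on the difference process $Z(t):=Y^{\xi,i}(t)-Y^{\eta,i}(t)$. Both solutions are driven by the same Brownian motion and the same chain $\Lambda^i$, so
\[
\d Z(t)=\big(b(X^{\xi,i}_t,\Lambda^i(t))-b(X^{\eta,i}_t,\Lambda^i(t))\big)\d t+\big(\sigma(X^{\xi,i}_t,\Lambda^i(t))-\sigma(X^{\eta,i}_t,\Lambda^i(t))\big)\d W(t).
\]
The advantage over Lemma \ref{thm:opao1} is that (\hyperlink{A1}{A1}) and (\hyperlink{A2}{A2}) apply directly to the difference. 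Since $\varphi(0)-\psi(0)+G(\psi,l)-G(\varphi,l)$ is exactly $Z$, they give
\[
\langle Z,\Delta b\rangle+\tfrac{p-1}{2}\|\Delta\sigma\|_{\rm HS}^2\le \alpha(k)|Z|^2+(\beta(k)+\gamma_p\gamma)\int_{-\infty}^0|X^{\xi,i}(u+\theta)-X^{\eta,i}(u+\theta)|^2\rho(\d\theta),
\]
with no constant term $C_1(\varepsilon)$. (The constant $\gamma$ is absorbed into the $\bar\beta+\gamma_p$ bookkeeping of \eqref{eq:07gggh}, as in the paper.) Because there is no constant term, the final bound will involve only $\|\xi-\eta\|_r^p$, with no additive $1$.

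\textbf{Step 1 (Itô formula).} Condition on $\omega_2$ through the kernel $P(\omega_2,\d\omega_1)$ as before. Take $\lambda\in(0,pr)$ and localize with stopping times $\tau_n'$ on $|Z|$. Apply Itô's formula to $\e^{\int_0^t(\lambda-p\alpha(\Lambda^i(u)))\d u}|Z(t)|^p$. Using the inequality above, and Young's inequality $|Z|^{p-2}a^2\le\frac{p-2}{p}|Z|^p+\frac2p a^p$ together with Jensen's inequality in $\rho$, the delay term is reduced to
\[
\int\!\!\int \e^{\int_0^u(\lambda-p\alpha)}|X^{\xi,i}(u+\theta)-X^{\eta,i}(u+\theta)|^p\rho(\d\theta)\d u .
\]

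\textbf{Step 2 (the delay term).} Handle this term exactly as in \eqref{eq:haibuzouma1}--\eqref{eq:xingba1}. Split the $\theta$-integral at $-u$. The initial-segment part is bounded by $\delta_{pr}(\rho)\|\xi-\eta\|_r^p\int_0^t\e^{-p\int_0^u\alpha}\d u$. For the remaining part, write $X^{\xi}-X^{\eta}=Z+(G(X^\xi_u,\cdot)-G(X^\eta_u,\cdot))$, apply \eqref{eq:yao} with $\varepsilon=\kappa_2$, and use (\hyperlink{A0}{A0}) to absorb the $G$-difference. The outcome is that the $|Z|^p$ coefficient is $(\bar\beta+\gamma_p)\big(p-2+2\delta_{pr-p\underline\alpha}(\rho)/(1-\kappa_2)^p\big)=f(k)-p\alpha(k)$. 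Gronwall's inequality then yields
\[
\E_P|Z(t)|^p\le C\|\xi-\eta\|_r^p\Big(\e^{\int_0^tf(\Lambda^i)}+\int_0^t\e^{-\lambda u}\e^{\int_u^tf(\Lambda^i)}\d u\Big),
\]
where the leading term comes from $|Z(0)|^p\le(1+\kappa_1)^p\|\xi-\eta\|_r^p$.

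\textbf{Step 3 (averaging over the chain).} Take $\E_{\P_2}$ and apply Lemma \ref{eq:wuwei}. This gives
\[
\E|Z(t)|^p\le C\|\xi-\eta\|_r^p\Big(\e^{-\zeta t}+\int_0^t\e^{-\lambda u}\e^{-\zeta(t-u)}\d u\Big)\le C\|\xi-\eta\|_r^p\e^{-(\lambda\wedge\zeta)t/2},
\]
where the factor $t$ arising in the case $\lambda=\zeta$ is absorbed by halving the rate. This establishes \eqref{eq:bushi1}.

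\textbf{Main obstacle.} The delicate point is the Gronwall step. There the coefficient of $|Z|^p$ must be exactly $\lambda-p\alpha(k)+\big(f(k)-p\alpha(k)\big)+p\alpha(k)$, so that after cancellation the exponent is precisely $\int f(\Lambda^i)$, with a rate gain of $\lambda$. This requires careful tracking of the time-dependent weight inside the Fubini substitution, where $\alpha(\Lambda^i)$ is bounded below by $\underline\alpha<0$ in producing $\delta_{pr-p\underline\alpha}$. A further subtlety is that the forcing term inherits the factor $\e^{-\lambda u}$; unlike Lemma \ref{thm:opao1}, no $\varepsilon$-slack is needed, since there is no constant term to absorb.
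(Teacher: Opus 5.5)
Your proposal is correct and follows essentially the same route as the paper: you condition on the chain path, apply Itô's formula to $\e^{\int_0^t(\lambda-p\alpha(\Lambda^i(u)))\d u}|Z(t)|^p$ with (A1)--(A2) used directly on the difference (so no constant term appears), treat the delay term by the Fubini split and $\kappa_2$-absorption of \eqref{eq:haibuzouma1}--\eqref{eq:xingba1}, then use Gronwall and average over $\Lambda^i$ via Lemma \ref{eq:wuwei}. The differences are only cosmetic: you localize on $|Z|$ rather than on $|Y^{\xi,i}|\vee|Y^{\eta,i}|$, and you handle a possible $\lambda=\zeta$ by halving the rate, whereas the paper simply takes $\lambda\in(0,pr\wedge\zeta)$.
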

	\begin{proof}
		The proof  is similar to Lemma \ref{thm:opao1}.   Whereas we herein provide an outline of the argument just to make the content self-contained. For every integer $n\geq 1$, define the stopping time
		\begin{equation}\label{Eq:1104:1}
			\hat{\tau}_n=\inf\{t\geq 0:|Y^{\xi,i}(t)|\vee |Y^{\eta,i}(t)|\geq n\}.
		\end{equation}
		We claim that $\hat{\tau}_n$ tends to $\infty$ almost surely, as $n\to\infty$. In fact, for any given $T>0$, we have $|Y^{\xi,i}(\hat{\tau}_n\wedge T)|^p\vee  |Y^{\eta,i}(\hat{\tau}_n\wedge T)|^p\geq n^p$ on the set $\left\{\hat{\tau}_n \leq T\right\}$. Therefore, it follows from the Markov inequality and Lemma \ref{thm:opao1} that
		\begin{equation}\label{EQ:1030:11}
			\begin{aligned}
				\mathbb{P}\left\{\hat{\tau}_n \leq T\right\}&\leq \mathbb{P}\{|Y^{\xi,i}(\hat{\tau}_n\wedge T)|^p\vee  |Y^{\eta,i}(\hat{\tau}_n\wedge T)|^p\geq n^p\}\\
				&\leq \frac{1  }{n^p}\left(\mathbb{E}|Y^{\xi,i}(\hat{\tau}_n\wedge T)|^p\vee \mathbb{E}|Y^{\eta,i}(\hat{\tau}_n\wedge T)|^p \right)\\
				& \leq \frac{C}{n^p}\left[(1+\|\xi\|_r^p)\vee(1+\|\eta\|_r^p)\right]\rightarrow 0,
			\end{aligned}
		\end{equation}
		as  $n \rightarrow \infty$. Fix $\omega_{2}\in\Omega_{2}$ and take $\lambda\in (0,pr)$. Set $J(t):=\mathbb{E}_{P}\left( \e^{\int_{0}^{t}(\lambda-p\alpha(\Lambda^i(u)))\d u}|Y^{\xi,i}(t)-Y^{\eta,i}(t)|^{p}\right)$.  
		Applying the  Itô formula to
		$
		\e^{\int_{0}^{t}(\lambda-p\alpha(\Lambda^i(u)))\mrm du}|Y^{\xi,i}(t)-Y^{\eta,i}(t)|^p
		$
		and taking expectation with respect to the transition function $P(\omega_{2},\d \omega_{1})$, we derive from (\hyperlink{A1}{A1}) and (\hyperlink{A2}{A2}) that
		\begin{equation*}
			\begin{aligned}
				J&(t\wedge\hat{\tau}_n)\\
				&\leq  |Y^{\xi,i}(0)-Y^{\eta,i}(0)|^{p}+\mathbb{E}_{P} \int_{0}^{t\wedge\hat{\tau}_n} \e^{\int_{0}^{u}(\lambda-p\alpha(\Lambda^i(v)))\d v }\bigg(\big(\lambda-p\alpha(\Lambda^i(u))\big)|Y^{\xi,i}(u)-Y^{\eta,i}(u)|^{p} \\
				&\quad+p|Y^{\xi,i}(u)-Y^{\eta,i}(u)|^{p-2}\Big(\langle Y^{\xi,i}(u)-Y^{\eta,i}(u),b(X^{\xi,i}_u,\Lambda^{i}(u))-b(X^{\eta,i}_u,\Lambda^{i}(u))\rangle\\
				&\quad+\frac{p-1}{2}\|\sigma(X^{\xi,i}_u,\Lambda^{i}(u))-\sigma(X^{\eta,i}_u,\Lambda^{i}(u))\|_{\text{HS}}^2\Big)\bigg)\d u\\
				&\leq  |Y^{\xi,i}(0)-Y^{\eta,i}(0)|^{p}+\lambda\mathbb{E}_{P} \int_{0}^{t\wedge\hat{\tau}_n} \e^{\int_{0}^{u}(\lambda-p\alpha(\Lambda^i(v)))\d v }|Y^{\xi,i}(u)-Y^{\eta,i}(u)|^{p}\d u+p(\bar{\beta}+\gamma_p) \\
				&\quad\times\mathbb{E}_{P} \int_{0}^{t\wedge\hat{\tau}_n}\int_{-\infty}^0 \e^{\int_{0}^{u}(\lambda-p\alpha(\Lambda^i(v)))\d v }|Y^{\xi,i}(u)-Y^{\eta,i}(u)|^{p-2}|X^{\xi,i}(u+\theta)-X^{\eta,i}(u+\theta)|^{2}\rho(\d\theta)\d u\\
				&=:\sum_{i=1}^3 J_i
			\end{aligned}
		\end{equation*}
		Similar to  \eqref{eq:mubiao},  we have
		\begin{equation*}
			\begin{aligned}
				J_3 &\leq 2(\bar{\beta}+\gamma_p)\frac{\delta_{pr}(\rho)}{1-\kappa_2} \|\xi-\eta\|_r^p\mathbb{E}_{P} \int_0^{t\wedge\hat{\tau}_n}\e^{-p\int_{0}^{u}\alpha(\Lambda^i(v))\d v} \d u\\
				&\quad	+(\bar{\beta}+\gamma_p)\left(p-2+\frac{2\delta_{pr-p\underline{\alpha}}(\rho)}{(1-\kappa_2)^{p}}\right)\mathbb{E}_{P} \int_{0}^{t\wedge\hat{\tau}_n} \e^{\int_{0}^{u}(\lambda-p\alpha(\Lambda^i(v)))\d v }|Y^{\xi,i}(u)-Y^{\eta,i}(u)|^{p} \d u,
			\end{aligned}
		\end{equation*}
		which implies 
		\begin{equation*}
			\begin{aligned}
				J(t\wedge\hat{\tau}_n)&\leq C_{p}\|\xi-\eta\|_r^p\left(1+ \int_0^{t}\e^{-p\int_{0}^{u}\alpha(\Lambda^i(v))\d v} \d u\right)\\
				&\quad +C(p,\lambda) \int_{0}^{t} \e^{\int_{0}^{u\wedge\hat{\tau}_n}(\lambda-p\alpha(\Lambda^i(v)))\d v }\mathbb{E}_{P}|Y^{\xi,i}(u\wedge\hat{\tau}_n)-Y^{\eta,i}(u\wedge\hat{\tau}_n)|^{p} \d u,
			\end{aligned}
		\end{equation*}
		where
		\[
		C(p,\lambda)=\lambda+(\bar{\beta}+\gamma_p)\Big(p-2+\frac{2\delta_{pr-p\underline{\alpha}}(\rho)}{(1-\kappa_2)^{p}}\Big).
		\]
		Hence, the Gronwall inequality implies 
		\begin{equation*}
			\begin{aligned}
				J(t\wedge\hat{\tau}_n)&\leq  C_{p}\|\xi-\eta\|_r^p\left(\e^{C(p,\lambda)t}+\int_0^t\e^{C(p,\lambda)(t-u)} \e^{-p\int_{0}^{u}\alpha(\Lambda^i(v))\d v}\d u\right).
			\end{aligned}
		\end{equation*}
		Letting $n\to\infty$ and utilizing Fatou's lemma yields 
		\begin{equation*}
			\begin{aligned}
				\e&^{\int_{0}^{t}(\lambda-p\alpha(\Lambda^i(u)))\d u} \mathbb{E}_{P}|Y^{\xi,i}(t)-Y^{\eta,i}(t)|^{p}\\
				&\leq  C_{p}\|\xi-\eta\|_r^p\left(\e^{C(p,\lambda)t}+\int_0^t\e^{C(p,\lambda)(t-u)} \e^{-p\int_{0}^{u}\alpha(\Lambda^i(v))\d v}\d u\right),
			\end{aligned}
		\end{equation*}
		which implies 
		\begin{equation*}
			\begin{aligned}
				\mathbb{E}_{P}|Y^{\xi,i}(t)-Y^{\eta,i}(t)|^{p}&\leq C_{p}\|\xi-\eta\|_r^p\left(\e^{\int_0^tf(\Lambda^i(u))\d u}+\int_0^t \e^{-\lambda u}\e^{\int_u^tf(\Lambda^i(u))\d v}\d u\right).
			\end{aligned}
		\end{equation*}
		Taking expectation with respect to $\P_2$, we have
		\begin{equation*}
			\begin{aligned}
				\mbb E|Y^{\xi,i}(t)-Y^{\eta,i}(t)|^{p}&\leq C_{p}\|\xi-\eta\|_r^p\left(\E_{\P_2}\e^{\int_0^tf(\Lambda^i(u))\d u}+\int_0^t \e^{-\lambda u}\E_{\P_2}\e^{\int_u^tf(\Lambda^i(u))\d v}\d u\right)
			\end{aligned}
		\end{equation*}
		Thus,  choosing  $\lambda\in (0,pr\wedge\zeta)$ , we have
		\begin{equation*}
			\begin{aligned}
				\mbb E|Y^{\xi,i}(t)-Y^{\eta,i}(t)|^{p}&\leq C_{p}\|\xi-\eta\|_r^p\left(\e^{-\zeta t}+\int_0^t \e^{-\lambda u}\e^{-\zeta(t-u)}\d u\right)\leq C_{p}\|\xi-\eta\|_r^p\e^{-\lambda t},
			\end{aligned}
		\end{equation*}
		where the first inequality has used Lemma \ref{eq:wuwei}. The proof is complete.
	\end{proof}
	\subsection{Boundedness and Convergence of $X_t$}
	\begin{theorem}\label{thm:opaoooo}
		Suppose that assumptions of Lemma \ref{thm:opao1} hold.  Then there exists a constant $C>0$  such that for any   $(\xi,i)\in E$ and  $t\geq 0 $,
		\begin{equation}\label{eq:bushi}
			\mbb{E}\|X^{\xi, i}_t\|_r^{p} \leq  {C}(1+\|\xi\|_{r}^{p}).
		\end{equation}
	\end{theorem}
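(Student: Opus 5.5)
The plan is to pass from the pointwise bound of Lemma \ref{thm:opao1} on $Y^{\xi,i}(t)=X^{\xi,i}(t)-G(X_t^{\xi,i},\Lambda^i(t))$ to a bound on the segment norm $\|X_t^{\xi,i}\|_r$.

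The first step is to split the segment norm into an initial-data part and a part over $[0,t]$:
\[
\|X_t\|_r^p\le \e^{-prt}\|\xi\|_r^p+\sup_{0\le s\le t}\e^{-pr(t-s)}|X(s)|^p .
\]
The first term is harmless. The difficulty is the weighted supremum in the second term, because Lemma \ref{thm:opao1} controls $\E|Y(t)|^p$ only pointwise in time, not $\E\sup_s$.

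To handle the supremum I would redo the It\^o computation of Lemma \ref{thm:opao1}, but this time for $\e^{\lambda s}|Y(s)|^p$ on $[0,t]$ with a fixed $\lambda\in(0,pr)$. I would take the supremum before taking expectations, and estimate the martingale term by the BDG inequality in the same way as in the proof of Lemma \ref{theorem:chabei}:
\[
\E\sup_{s\le t}\int_0^s \e^{\lambda u}|Y|^{p-2}\langle Y,\sigma\,\d W\rangle \le \tfrac12\E\sup_{s\le t}\e^{\lambda s}|Y(s)|^p+C\,\E\int_0^t \e^{\lambda u}|Y(u)|^{p-2}\|\sigma(X_u,\Lambda(u))\|_{\rm HS}^2\d u .
\]
Then I would use \eqref{eq:qianya}, the linear growth of $\sigma$ coming from (\hyperlink{A2}{A2}), Young's inequality, and the Fubini estimate \eqref{eq:haibuzouma1} with $\alpha$ replaced by its bounded range, which is possible because $N<\infty$. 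After absorbing the $\tfrac12\E\sup$ term, everything on the right-hand side is reduced to $\|\xi\|_r^p$, constants times $\e^{\lambda t}$, and $\int_0^t \e^{\lambda u}\E|Y(u)|^p\d u$. The last integral is at most $C(1+\|\xi\|_r^p)\e^{\lambda t}/\lambda$ by Lemma \ref{thm:opao1}. This would give
\[
\E\sup_{0\le s\le t}\e^{\lambda s}|Y(s)|^p\le C(1+\|\xi\|_r^p)\e^{\lambda t}.
\]

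Next I would convert this into a bound for $X$ itself, using the contraction property of $G$. By \eqref{eq:yao} with $\varepsilon'=\kappa_1$ and \eqref{eq:xiangai},
\[
\e^{\lambda s}|X(s)|^p\le \frac{1}{(1-\kappa_1)^{p-1}}\e^{\lambda s}|Y(s)|^p+\frac{\kappa_1^{p}}{\kappa_1^{p-1}}\e^{\lambda s}\|X_s\|_r^p .
\]
The weight $\e^{\lambda s}$ with $\lambda<pr$ satisfies $\e^{\lambda s}\|X_s\|^p_r\le \|\xi\|_r^p+\sup_{u\le s}\e^{\lambda u}|X(u)|^p$ (up to the $\theta\le -s$ part). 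Taking the supremum over $s\le t$ and using $\kappa_1<1$, the $X$-term can be absorbed on the left. This should be done with the $C_r$ weight $\e^{pr\theta}$ kept explicit; if needed I would replace the crude constant $\kappa_1$ by an optimized $\varepsilon'$. The outcome is $\E\sup_{s\le t}\e^{\lambda s}|X(s)|^p\le C(1+\|\xi\|_r^p)\e^{\lambda t}$. Since $\lambda<pr$,
\[
\sup_{s\le t}\e^{-pr(t-s)}|X(s)|^p\le \e^{-\lambda t}\sup_{s\le t}\e^{\lambda s}|X(s)|^p,
\]
and substituting this into the first step gives \eqref{eq:bushi}.

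The main obstacle is the second step. There the sup estimate must be uniform in $t$ even though $\alpha(\Lambda)$ is random and may be positive in some states. I plan to avoid the exponential functional altogether and use only the already-uniform bound from Lemma \ref{thm:opao1} inside the time integral, paying with the factor $\e^{\lambda t}$, which is cancelled at the end. This requires $\sup_k|\alpha(k)|$, $\sup_k|b(\mathbf 0,k)|$ and $\sup_k\|\sigma(\mathbf 0,k)\|_{\rm HS}$ to be finite, which holds because $\S$ is finite.
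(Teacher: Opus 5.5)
Your proposal is correct and follows essentially the same route as the paper. The paper also splits $\|X_t\|_r$ into an initial-data part and a weighted supremum of $|X|$, trades $X$ for $Y$ via the contraction $\kappa_1<1$ (its \eqref{eq:putaogan1}), and bounds $\E\sup_{s\le t}(\text{weight})\,|Y(s)|^p$ by It\^o's formula, BDG, the Fubini estimate \eqref{eq:xixifu}, and Lemma \ref{thm:opao1} inside the time integral, cancelling the exponential factor at the end. The differences are minor: the paper uses the weight $\e^{prt}$ instead of $\e^{\lambda t}$ with $\lambda<pr$ (both work), and it localizes with the stopping times $\tau'_n$ before absorbing the $\tfrac12\E\sup$ term from BDG. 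You need the same localization, since finiteness of $\E\sup_{s\le t}\e^{\lambda s}|Y(s)|^p$ is not known a priori.
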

	\begin{proof}
		By the definition of $\|\cdot\|_r$ and some basic calculations, we get
		\begin{equation}\label{eq:bumingbai1}
			\e^{p r t}\mbb E\|X_{t}^{\xi,i}\|_{r}^{p} \leq  \|\xi\|_{r}^{p}+\mbb E\left( \sup _{0 < u \leq t}\e^{p r u}\left|X^{\xi,i}(u)\right|^{p}\right).
		\end{equation}
		By \eqref{eq:yao} and (\hyperlink{A0}{A0}), we have
		\begin{equation*}
			\begin{aligned}
				\sup _{0 < u \leq t}\e^{p r u}\left|X^{\xi,i}(u)\right|^{p}&\leq \frac{1}{(1-\varepsilon)^{p-1}}\sup _{0 < u \leq t}\e^{p r u}\left|Y^{\xi,i}(u)\right|^{p}+\frac{\kappa^p}{\varepsilon^{p-1}}\sup _{0 < u \leq t}\int_{-\infty}^{0}\e^{p r u}|X^{\xi,i}(u+\theta)|^{p}\rho(\d \theta)\\
				&\leq  \frac{1}{(1-\varepsilon)^{p-1}}\sup _{0 < u \leq t}\e^{p r u}\left|Y^{\xi,i}(u)\right|^{p}+\frac{\kappa^p\delta_{pr}(\rho)}{\varepsilon^{p-1}}\left(\|\xi\|_{r}^{p}+\sup _{0 < u \leq t}\e^{p r u}\left|X^{\xi,i}(u)\right|^{p}\right).
			\end{aligned}
		\end{equation*}
		Letting $\varepsilon=\kappa_1$ implies
		\begin{equation}\label{eq:putaogan1}
			\begin{aligned}
				\sup _{0 < u \leq t}\e^{p r u}\left|X^{\xi,i}(u)\right|^{p}
				\leq & \frac{\kappa_1}{1-\kappa_1}\|\xi\|_{r}^{p} +  \frac{1}{(1-\kappa_1)^{p}}\sup _{0 < u \leq t}\e^{p r u}\left|Y^{\xi,i}(u)\right|^{p}.
			\end{aligned} 
		\end{equation}
		Inserting \eqref{eq:putaogan1} into \eqref{eq:bumingbai1} yields
		\begin{equation}\label{eq:weiwei}
			\e^{p r t}\mbb E\|X_{t}^{\xi,i}\|_{r}^{p} \\
			\leq \frac{1}{1-\kappa_1}\|\xi\|_{r}^{p} + \frac{1}{(1-\kappa_1)^{p-1}}\E\left( \sup _{0 < u \leq t}\e^{p r u}\left|Y^{\xi,i}(u)\right|^{p}\right).
		\end{equation}
		Set $\Gamma(t):=\mathbb{E}\left(\sup_{0<u \leq t} \mathrm{e}^{p r u}\left|Y^{\xi,i}(u)\right|^{p}\right)$.  The estimation of $\Gamma(t)$ is given as follows. 	Applying Itô formula to $\e^{p r t}|Y^{\xi,i}(t)|^{p} $ and using \eqref{eq:qianya}, we obtain
		\begin{equation*}
			\begin{aligned}
				\e^{p r t}|Y^{\xi,i}(t)|^{p} 
				&\leq |Y^{\xi,i}(0)|^p+\int_{0}^t  \e^{p r u}\bigg( pr|Y^{\xi,i}(u)|^{p}+p|Y^{\xi,i}(u)|^{p-2}\Big(\langle Y^{\xi,i}(u),b(X^{\xi,i}_u,\Lambda^{i}(u))\rangle\\
				&\quad+\frac{p-1}{2}\|\sigma(X^{\xi,i}_u,\Lambda^{i}(u))\|_{\text{HS}}^2\Big)\bigg)\d u+p \int_{0}^{t} \e^{p r u}|Y^{\xi,i}(u)|^{p-2}\langle Y^{\xi,i}(u),\sigma(X^{\xi,i}_u, \Lambda(u)) \mathrm{d} W(u)\rangle\\ 
				&\leq |Y^{\xi,i}(0)|^p+p(r+\bar{\alpha}\vee 0+\varepsilon )\int_{0}^{t} \e^{p r u}|Y^{\xi,i}(u)|^{p} \mrm d u +C_{p,\varepsilon}\int_{0}^{t} \e^{p r u}|Y^{\xi,i}(u)|^{p-2} \mrm d u 
				\\
				&\quad+p(\bar{\beta}+\gamma_p+\varepsilon) \int_{0}^{t} \int_{-\infty}^{0} \e^{p r u}|Y^{\xi,i}(u)|^{p-2}|X^{\xi,i}(u+\theta)|^{2} \rho(\mathrm{d} \theta) \mathrm{d} u\\
				&\quad+p \int_{0}^{t} \e^{p r u}|Y^{\xi,i}(u)|^{p-2}\left\langle Y^{\xi,i}(u),\sigma(X^{\xi,i}_u, \Lambda^{i}(u)) \mathrm{d} W(u)\right\rangle.
			\end{aligned}
		\end{equation*}
		Moreover, we derive from \eqref{eq:qianya} that 
		\[
		\frac{p-1}{2}\|\sigma\left(\varphi,k\right)\|_{\rm H S}^{2}\leq C_{1}(\varepsilon)+(\gamma_p+\varepsilon) \int_{-\infty}^{0}|\varphi(\theta)|^{2} \rho(\d \theta),
		\]
		It follows from the BDG inequality that
		\begin{equation*}
			\begin{aligned}
				\mathbb{E}&\left(\sup_{0<u \leq t\wedge\tau^{\prime}_n} \int_{0}^{u} \e^{p r v}|Y^{\xi,i}(v)|^{p-2}\left\langle Y^{\xi,i}(v),\sigma(X^{\xi,i}_v, \Lambda^{i}(v)) \mathrm{d} W(v)\right\rangle\right)\\
				&\leq 4\sqrt{2}\mathbb{E}\left(\int_0^{t\wedge\tau^{\prime}_n}\e^{2pru}|Y^{\xi,i}(u)|^{2p-2}\|\sigma(X^{\xi,i}_u, \Lambda^{i}(u)) \|_{\rm HS}^2\d u\right)^{\frac{1}{2}}\\
				&\leq \frac{1}{2}\Gamma(t\wedge\tau^{\prime}_n)+C\E\int_0^{t\wedge\tau^{\prime}_n}\e^{pru}|Y^{\xi,i}(u)|^{p-2}\|\sigma(X^{\xi,i}_u, \Lambda^{i}(u)) \|_{\rm HS}^2\d u\\
				&\leq \frac{1}{2}\Gamma(t\wedge\tau^{\prime}_n)+C_{p,\varepsilon}\E\int_{0}^{t\wedge\tau^{\prime}_n} \e^{p r u}|Y^{\xi,i}(u)|^{p-2} \mrm d u \\
				&\quad+C_{p,\varepsilon}\E\int_{0}^{t\wedge\tau^{\prime}_n} \int_{-\infty}^{0} \e^{p r u}|Y^{\xi,i}(u)|^{p-2}|X^{\xi,i}(u+\theta)|^{2} \rho(\mathrm{d} \theta) \mathrm{d} u,\\
			\end{aligned}
		\end{equation*}
		where $\tau^{\prime}_n$  is given in \eqref{Eq:12201}. The Young inequality implies 
			\begin{align*}
				\Gamma&(t\wedge\tau^{\prime}_n)\\
				&\leq |Y^{\xi,i}(0)|^p+C_{p,\varepsilon}\E\int_{0}^{t\wedge\tau^{\prime}_n} \e^{p r u}|Y^{\xi,i}(u)|^{p} \mrm d u +C_{p,\varepsilon}\E\int_{0}^{t\wedge\tau^{\prime}_n} \e^{p r u}|Y^{\xi,i}(u)|^{p-2} \mrm d u \\
				&\quad+C_{p,\varepsilon}\E\int_{0}^{t\wedge\tau^{\prime}_n} \int_{-\infty}^{0} \e^{p r u}|Y^{\xi,i}(u)|^{p-2}|X^{\xi,i}(u+\theta)|^{2} \rho(\mathrm{d} \theta) \mathrm{d} u\\
				&\quad+p \mathbb{E}\left(\sup_{0<u \leq t\wedge\tau^{\prime}_n} \int_{0}^{u} \e^{p r v}|Y^{\xi,i}(v)|^{p-2}\left\langle Y^{\xi,i}(v),\sigma(X^{\xi,i}_v, \Lambda^{i}(v)) \mathrm{d} W(v)\right\rangle\right)\\
				& \leq |Y^{\xi,i}(0)|^p+\frac{1}{2}\Gamma(t\wedge\tau^{\prime}_n)+C_{p,\varepsilon}\E\int_{0}^{t\wedge\tau^{\prime}_n} \e^{p r u}|Y^{\xi,i}(u)|^{p} \mrm d u +C_{p,\varepsilon}\E\int_{0}^{t\wedge\tau^{\prime}_n} \e^{p r u}|Y^{\xi,i}(u)|^{p-2} \mrm d u \\
				&\quad+C_{p,\varepsilon}\E\int_{0}^{t\wedge\tau^{\prime}_n} \int_{-\infty}^{0} \e^{p r u}|Y^{\xi,i}(u)|^{p-2}|X^{\xi,i}(u+\theta)|^{2} \rho(\mathrm{d} \theta) \mathrm{d} u\\
				&\leq |Y^{\xi,i}(0)|^p+\frac{1}{2}\Gamma(t\wedge\tau^{\prime}_n)+C_{p,\varepsilon}\e^{prt}+C_{p,\varepsilon}\E\int_{0}^{t\wedge\tau^{\prime}_n} \e^{p r u}|Y^{\xi,i}(u)|^{p} \mrm d u\\
				&\quad+C_{p,\varepsilon}\int_{0}^{t\wedge\tau^{\prime}_n} \int_{-\infty}^{0} \e^{p r u}|X^{\xi,i}(u+\theta)|^{p} \rho(\mathrm{d} \theta) \mathrm{d} u,
			\end{align*}
		Similar to \eqref{eq:haibuzouma1}, we have 
		\begin{equation*}
			\begin{aligned}
				\int_{0}^{t\wedge\tau^{\prime}_n} &\int_{-\infty}^{0} \e^{pr u}|X^{\xi,i}(u+\theta)|^{p} \rho(\mathrm{d} \theta) \mathrm{d} u \\
				&=\int_{0}^{t\wedge\tau^{\prime}_n} \int_{-\infty}^{-u} \e^{-p r\theta} \e^{p r(u+\theta)}|X^{\xi,i}(u+\theta)|^{p} \rho(\mathrm{d} \theta) \mathrm{d} u+\int_{-t\wedge\tau^{\prime}_n}^{0}\int_{0}^{t\wedge\tau^{\prime}_n+\theta} \e^{pr(u-\theta)}|X^{\xi,i}(u)|^{p} \mathrm{d} u \rho(\mathrm{d} \theta) \\
				&\leq \delta_{pr}\left(\rho\right)\|\xi\|_{r}^{p}t+\delta_{pr}\left(\rho\right) \int_{0}^{t\wedge\tau^{\prime}_n}\e^{pr u}|X^{\xi,i}(u)|^p\mathrm{d}u\\
				&\leq \delta_{pr}\left(\rho\right)\|\xi\|_{r}^{p}t+\frac{\delta_{pr}\left(\rho\right)}{(1-\varepsilon)^{p-1}} \int_{0}^{t\wedge\tau^{\prime}_n}\e^{pr u}|Y^{\xi,i}(u)|^p\mathrm{d}u\\
				&\quad+\frac{\delta_{pr}\kappa^p\left(\rho\right)}{\varepsilon^{p-1}} \int_{0}^{t\wedge\tau^{\prime}_n} \int_{-\infty}^{0} \e^{pr u}|X^{\xi,i}(u+\theta)|^{p} \rho(\mathrm{d} \theta) \mathrm{d} u 
			\end{aligned}
		\end{equation*}
		for any $\varepsilon>0$.	Letting $\varepsilon=\kappa_1$ implies that 
		\begin{equation}\label{eq:xixifu}
			\int_{0}^{t\wedge\tau^{\prime}_n} \int_{-\infty}^{0} \e^{pr u}|X^{\xi,i}(u+\theta)|^{p} \rho(\mathrm{d} \theta) \mathrm{d} u 
			\leq \frac{\delta_{pr}\left(\rho\right)}{1-\kappa_1}\|\xi\|_{r}^{p}t+\frac{\delta_{pr}\left(\rho\right)}{(1-\kappa_1)^{p}} \int_{0}^{t\wedge\tau^{\prime}_n}\e^{pr u}|Y^{\xi,i}(u)|^p\mathrm{d}u
		\end{equation}
		Combining the above calculations, we get
		\begin{equation*}
			\begin{aligned}
				\Gamma(t\wedge\tau^{\prime}_n)&\leq C_p\|\xi\|_r^p(1+t)+C_{p,\varepsilon}\e^{prt}+C_{p,\varepsilon}\E\int_{0}^{t\wedge\tau^{\prime}_n} \e^{p r u}|Y^{\xi,i}(u)|^{p} \mrm d u.
			\end{aligned}	
		\end{equation*}
		By a similar argument as  \eqref{EQ:1030:11},  $\tau^{\prime}_n$ tends to $\infty$, as $n\to\infty$.   Letting $n\to\infty$ and using Fatou's lemma implies 
		\begin{equation*}
			\begin{aligned}
				\Gamma(t)&\leq C_p\|\xi\|_r^p(1+t)+C_{p,\varepsilon}\e^{prt}+C_{p,\varepsilon}\E\int_{0}^{t} \e^{p r u}|Y^{\xi,i}(u)|^{p} \mrm d u\\
				&\leq C_{p,\varepsilon}\e^{prt}+C_p\|\xi\|_r^p(1+t+\e^{prt}),
			\end{aligned}	
		\end{equation*}
		where Lemma \ref{thm:opao1} has been used in the last inequality. Thus, 
		\begin{equation*}
			\begin{aligned}
				\mbb E\|X_{t}^{\xi,i}\|_{r}^{p}& \leq \e^{-p r t} \|\xi\|_{r}^{p}+\e^{-p r t}\Gamma(t)\\
				&\leq C_{p,\varepsilon}+ \e^{-p r t} \|\xi\|_{r}^{p}+C_p\|\xi\|_r^p(1+\e^{-p r t}+t\e^{-p r t})\\
				&\leq C_{p,\varepsilon}(1+\|\xi\|_r^p),
			\end{aligned}
		\end{equation*}
		where we have used the fact that for any $c>0$ and $t>0$, $t\e^{-ct}\leq \frac{1}{c\e}$.  Now the proof is complete.\end{proof}
	
	\begin{theorem}\label{thm:opao2}
		Suppose that assumptions of Lemma \ref{thm:opao1} hold. Then there exists a constant $C>0$  such that for any   $\xi$, $\eta \in  C_r$,  $i \in \mbb S $, and  $t\geq 0 $,
		\begin{equation}\label{eq:hengaoxing}
			\mbb{E}\|X^{\xi, i}_t-X^{\eta, i}_t\|_r^{p} \leq C \|\xi-\eta\|_{r}^{p}\e^{-\lambda t},
		\end{equation}	
		where $\lambda$ is given in \eqref{eq:bushi1}.
	\end{theorem}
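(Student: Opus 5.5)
We mimic the proof of Theorem \ref{thm:opaoooo}, now for the difference process, and feed in the exponential decay of Lemma \ref{lem:xiuxi8} in place of the uniform bound of Lemma \ref{thm:opao1}. Write $Z(t):=X^{\xi,i}(t)-X^{\eta,i}(t)$ and $\bar Y(t):=Y^{\xi,i}(t)-Y^{\eta,i}(t)$. Since both solutions are driven by the same switching path $\Lambda^i$, we have
\[
\bar Y(t)=Z(t)-\big(G(X^{\xi,i}_t,\Lambda^i(t))-G(X^{\eta,i}_t,\Lambda^i(t))\big).
\]
The key point is that all constants must be uniform in $t$. To achieve this we weight with $\e^{\lambda' u}$ for some $\lambda'\in(0,\lambda)$, with $\lambda'<pr$, instead of with $\e^{pru}$; at the end we may need to shrink $\lambda$ slightly, which is harmless since $\lambda$ is any rate from \eqref{eq:bushi1} (we can take $\lambda<pr$ there).

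\textbf{Step 1.} From the definition of $\|\cdot\|_r$, and because $\e^{r\theta}\le \e^{(\lambda'/p)\theta}$ for $\theta\le 0$ and $-t\le\theta$,
\[
\|Z_t\|_r^p\le \e^{-prt}\|\xi-\eta\|_r^p+\e^{-\lambda' t}\sup_{0<u\le t}\e^{\lambda' u}|Z(u)|^p .
\]

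\textbf{Step 2.} Apply \eqref{eq:yao} with $\varepsilon=\kappa_1$ together with (\hyperlink{A0}{A0}), exactly as in \eqref{eq:putaogan1}. This absorbs the neutral term and gives
\[
\sup_{0<u\le t}\e^{\lambda' u}|Z(u)|^p\le C\|\xi-\eta\|_r^p+\frac{1}{(1-\kappa_1)^p}\sup_{0<u\le t}\e^{\lambda' u}|\bar Y(u)|^p .
\]
Here $\delta_{\lambda'}(\rho)\le\delta_{pr}(\rho)$ is used when splitting the integral over $\theta\le -u$.

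\textbf{Step 3.} Estimate $\bar\Gamma(t):=\E\sup_{0<u\le t}\e^{\lambda' u}|\bar Y(u)|^p$. Apply It\^o's formula to $\e^{\lambda' t}|\bar Y(t)|^p$ and use (\hyperlink{A1}{A1})--(\hyperlink{A2}{A2}) for the drift and diffusion differences; no constant terms appear because we work with differences. Control the martingale part by the BDG inequality, absorbing $\frac12\bar\Gamma$ as in Theorem \ref{thm:opaoooo} and localizing with $\hat\tau_n$. Bound the mixed terms $|\bar Y|^{p-2}|Z(u+\theta)|^2$ by Young's inequality, and bound $\int_0^t\int_{-\infty}^0\e^{\lambda' u}|Z(u+\theta)|^p\rho(\d\theta)\d u$ by the Fubini argument of \eqref{eq:xixifu}. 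This yields
\[
\bar\Gamma(t)\le C\|\xi-\eta\|_r^p+C\,\E\int_0^t\e^{\lambda' u}|\bar Y(u)|^p\d u .
\]
It is important that we do not use Gronwall here. Instead, Lemma \ref{lem:xiuxi8} gives
\[
\E\int_0^t\e^{\lambda' u}|\bar Y(u)|^p\d u\le C\|\xi-\eta\|_r^p\int_0^t\e^{-(\lambda-\lambda')u}\d u\le C\|\xi-\eta\|_r^p ,
\]
uniformly in $t$. Hence $\bar\Gamma(t)\le C\|\xi-\eta\|_r^p$, and combining with Steps 1 and 2 gives
\[
\E\|Z_t\|_r^p\le C\|\xi-\eta\|_r^p\e^{-\lambda' t}.
\]

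\textbf{Main obstacle.} The delicate step is Step 3. The It\^o drift terms, including the one coming from $|\alpha(\Lambda^i)|$, and the $\rho$-integral terms must all be reduced to $\int\e^{\lambda' u}|\bar Y|^p$ or to $\|\xi-\eta\|_r^p$ with $t$-independent constants. If a bare Gronwall step were used, it would produce a factor $\e^{Ct}$ and destroy the rate. The plan avoids this by substituting the decay of Lemma \ref{lem:xiuxi8} directly into the time integral. For finite $\mathbb S$, $\max_k|\alpha(k)|<\infty$, so that drift contribution is also bounded by $C\int\e^{\lambda' u}|\bar Y|^p$.
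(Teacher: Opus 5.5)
Your proposal is correct and is essentially the paper's proof. Both arguments reduce $\|X^{\xi,i}_t-X^{\eta,i}_t\|_r$ via (A0) and \eqref{eq:yao} to a weighted supremum of $Y^{\xi,i}-Y^{\eta,i}$, estimate it by It\^o's formula, BDG and the Fubini splitting, and then insert the decay of Lemma~\ref{lem:xiuxi8} directly into the time integral instead of using Gronwall.

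The only difference is the exponential weight, and it affects the rate. The paper uses $\e^{pru}$ and gets $\Theta(t)\le C\|\xi-\eta\|_r^p(1+t+\e^{(pr-\lambda)t})$; multiplying by $\e^{-prt}$ gives exactly the rate $\lambda$, because $\lambda<pr$. Your weight $\e^{\lambda' u}$ with $\lambda'<\lambda$ gives $t$-uniform bounds but only the rate $\lambda'$. To recover the stated rate you must, as you indicate, apply Lemma~\ref{lem:xiuxi8} at a slightly larger rate. This is legitimate, since its proof works for every rate in $(0,pr\wedge\zeta)$.
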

	\begin{proof}
		The proof  is similar to Theorem \ref{thm:opaoooo}. First, we have
		\begin{equation}\label{eq:bumingbai}
			\begin{aligned}
				\e^{p r t}\mbb E\|X_{t}^{\xi,i}-X_{t}^{\eta,i}\|_{r}^{p}& \leq  \|\xi-\eta\|_{r}^{p}+\mbb E\left( \sup _{0 < u \leq t}\e^{p r u}\left|X^{\xi,i}(u)-X^{\eta,i}(u)\right|^{p}\right)\\
				& \leq \frac{1}{1-\kappa_1}\|\xi-\eta\|_{r}^{p} + \frac{1}{(1-\kappa_1)^{p-1}}\E\left( \sup _{0 < u \leq t}\e^{p r u}\left|Y^{\xi,i}(u)-Y^{\eta,i}(u)\right|^{p}\right).
			\end{aligned}
		\end{equation}
		Set $\Theta(t):=\mathbb{E}\left(\sup_{0<u \leq t} \mathrm{e}^{p r u}\left|Y^{\xi,i}(u)-Y^{\eta,i}(u)\right|^{p}\right)$. Applying Itô formula to $\e^{p r t}|Y^{\xi,i}(t)-Y^{\eta,i}(t)|^{p} $ and using (\hyperlink{A1}{A1}) and (\hyperlink{A2}{A2}), we obtain
		\begin{equation*}
			\begin{aligned}
				\e^{p r t}&|Y^{\xi,i}(t)-Y^{\eta,i}(t)|^{p}\\ 
				&\leq |Y^{\xi,i}(0)-Y^{\eta,i}(0)|^p+p(r+\bar{\alpha}\vee 0)\int_{0}^{t} \e^{p r u}|Y^{\xi,i}(u)-Y^{\eta,i}(u)|^{p} \mrm d u \\
				&+p(\bar{\beta}+\gamma_p) \int_{0}^{t} \int_{-\infty}^{0} \e^{p r u}|Y^{\xi,i}(u)-Y^{\eta,i}(u)|^{p-2}|X^{\xi,i}(u+\theta)-X^{\eta,i}(u+\theta)|^{2} \rho(\mathrm{d} \theta) \mathrm{d} u\\
				&+p \int_{0}^{t} \e^{p r u}|Y^{\xi,i}(u)-Y^{\eta,i}(u)|^{p-2}\left\langle Y^{\xi,i}(u)-Y^{\eta,i}(u),\left(\sigma(X^{\xi,i}_u, \Lambda^{i}(u))-\sigma(X^{\eta,i}_u, \Lambda^{i}(u))\right) \mathrm{d} W(u)\right\rangle.
			\end{aligned}
		\end{equation*}
		Hence, by the Young inequality and the BDG inequality,  we get 
		\begin{equation}\label{Eq:1104:2}
			\begin{aligned}
				\Theta(t\wedge\hat{\tau}_n) &\leq |Y^{\xi,i}(0)-Y^{\eta,i}(0)|^p+C_{p,\varepsilon}\E\int_{0}^{t\wedge\hat{\tau}_n} \e^{p r u}|Y^{\xi,i}(u)-Y^{\eta,i}(u)|^{p} \mrm d u\\
				&\quad+C_{p}\E\int_{0}^{t\wedge\hat{\tau}_n} \int_{-\infty}^{0} \e^{p r u}|Y^{\xi,i}(u)-Y^{\eta,i}(u)|^{p-2}|X^{\xi,i}(u+\theta)-Y^{\eta,i}(u+\theta)|^{2} \rho(\mathrm{d} \theta) \mathrm{d} u\\
				&\quad+p \mathbb{E}\left(\sup_{0<u \leq t\wedge\hat{\tau}_n} \int_{0}^{u} \e^{p r v}|Y^{\xi,i}(v)-Y^{\eta,i}(v)|^{p-2}\right.\\
				&\qquad\qquad\left.\times\left\langle Y^{\xi,i}(u)-Y^{\eta,i}(u),\left(\sigma(X^{\xi,i}_u, \Lambda^{i}(u))-\sigma(X^{\eta,i}_u, \Lambda^{i}(u))\right) \mathrm{d} W(u)\right\rangle\right)\\
				&\leq |Y^{\xi,i}(0)-Y^{\eta,i}(0)|^p+C_{p}\E\int_{0}^{t\wedge\hat{\tau}_n} \e^{p r u}|Y^{\xi,i}(u)-Y^{\eta,i}(u)|^{p} \mrm d u  \\
				&\quad+C_{p}\E\int_{0}^{t\wedge\hat{\tau}_n} \int_{-\infty}^{0} \e^{p r u}|Y^{\xi,i}(u)-Y^{\eta,i}(u)|^{p-2}|X^{\xi,i}(u+\theta)-X^{\eta,i}(u+\theta)|^{2} \rho(\mathrm{d} \theta) \mathrm{d} u\\
				&\leq |Y^{\xi,i}(0)-Y^{\eta,i}(0)|^p+C_{p}\E\int_{0}^{t\wedge\hat{\tau}_n} \e^{p r u}|Y^{\xi,i}(u)-Y^{\eta,i}(u)|^{p} \mrm d u\\
				&\quad+C_{p}\E\int_{0}^{t\wedge\hat{\tau}_n} \int_{-\infty}^{0} \e^{p r u}|X^{\xi,i}(u+\theta)-X^{\eta,i}(u+\theta)|^{p} \rho(\mathrm{d} \theta) \mathrm{d} u,
			\end{aligned}
		\end{equation}
		where $\hat{\tau}_n$ is defined in \eqref{Eq:1104:1}. Similar to \eqref{eq:xixifu}, we have
		\begin{equation}\label{Eq:1104:3}
			\begin{aligned}
				\int_{0}^{t\wedge\hat{\tau}_n} &\int_{-\infty}^{0} \e^{pr u}|X^{\xi,i}(u+\theta)-X^{\eta,i}(u+\theta)|^{p} \rho(\mathrm{d} \theta) \mathrm{d} u \\
				& \leq \frac{\delta_{pr}\left(\rho\right)}{1-\kappa_1}\|\xi-\eta\|_{r}^{p}t+\frac{\delta_{pr}\left(\rho\right)}{(1-\kappa_1)^{p}} \int_{0}^{t\wedge\hat{\tau}_n}\e^{pr u}|Y^{\xi,i}(u)-Y^{\eta,i}(u)|^p\mathrm{d}u.
			\end{aligned} 
		\end{equation}
		Plugging \eqref{Eq:1104:3} into \eqref{Eq:1104:2} and letting $n\to \infty$ gets
		\begin{equation*}
			\begin{aligned}
				\Theta(t)&\leq C_p\|\xi-\eta\|_r^p(1+t)+C_{p}\E\int_{0}^{t} \e^{p r u}|Y^{\xi,i}(u)-Y^{\eta,i}(u)|^{p} \mrm d u\\
				&\leq C_p\|\xi-\eta\|_r^p(1+t+\e^{(pr-\lambda)t}),
			\end{aligned}	
		\end{equation*}
		where Lemma  \ref{lem:xiuxi8} has been used in the last inequality. Thus, 
		\begin{equation*}
			\begin{aligned}
				\mbb E\|X_{t}^{\xi,i}\|_{r}^{p}& \leq \e^{-p r t} \|\xi-\eta\|_{r}^{p}+\e^{-p r t}\Theta(t)\\
				&\leq C_p\|\xi-\eta\|_r^p(\e^{-p r t}+t\e^{-(p r-\lambda) t}\e^{-\lambda t}+\e^{-\lambda t})\\
				&\leq C_p\|\xi-\eta\|_r^p\e^{-\lambda t},
			\end{aligned}
		\end{equation*}
		where we have used the fact  $\lambda<pr$.  The proof is complete.	
	\end{proof}
	Finally, we conclude this subsection with the following property.
	Let $\delta_{(\xi, i)} P_{t}(\cdot)$, $t\geq0$,  $(\xi,i)\in E$,   be the transition probability function of of $(X_t,\Lambda(t))$. Denote by ${\left(P_{t}\right)_{t \geq 0}}$ the semigroup associated to $\delta_{(\xi, i)} P_{t}(\cdot)$.  To be precise, for $g\in\mathscr{B}_b(E)$, the Markov semigroup $P_t$ is defined by 
	\[
	P_tg(\xi,i)=\E g(X^{\xi, i}(t),\Lambda^{i}(t))=\int_{E}g(\eta,j)\delta_{(\xi, i)} P_{t}(\mathrm{d} \eta, \mrm d j).
	\] 
	\begin{corollary}\label{cor:tuishitou}
		Suppose that assumptions of Lemma \ref{thm:opao1}  hold. Then $(X_t,\Lambda(t))$ admits the Feller property. Moreover, $(X_t,\Lambda(t))$ is a strong Markov process.
	\end{corollary}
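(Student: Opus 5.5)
The plan is to derive the Feller property directly from the $L^p$ contraction estimate of Theorem \ref{thm:opao2}, using the fact that the switching component $\Lambda^i(t)$ does not depend on the initial segment. Fix $g\in\mathscr{C}_b(E)$ and $t\ge0$. Since $\S$ is discrete, continuity of $P_tg$ on $E=C_r\times\S$ reduces to continuity of $\xi\mapsto P_tg(\xi,i)$ for each fixed $i$. On our product space $\Lambda^i(t)$ is the same path for all initial segments $\xi$. Hence
\[
|P_tg(\xi_n,i)-P_tg(\xi,i)|\le \E\bigl|g(X^{\xi_n,i}_t,\Lambda^i(t))-g(X^{\xi,i}_t,\Lambda^i(t))\bigr|.
\]
By Theorem \ref{thm:opao2}, if $\|\xi_n-\xi\|_r\to0$ then $\E\|X^{\xi_n,i}_t-X^{\xi,i}_t\|_r^p\le C\|\xi_n-\xi\|_r^p\to0$, so $X^{\xi_n,i}_t\to X^{\xi,i}_t$ in probability in $C_r$. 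Bounded continuity of $g$ and a subsequence argument (every subsequence has an a.s. convergent further subsequence, then dominated convergence) give $P_tg(\xi_n,i)\to P_tg(\xi,i)$. This yields the Feller property, i.e. $P_t\mathscr{C}_b(E)\subset\mathscr{C}_b(E)$.

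Next I would record that $(X_t,\Lambda(t))$ is a time-homogeneous Markov process. This follows from pathwise uniqueness in Theorem \ref{Eq:1026:2}, combined with the flow property: for $s\ge0$, the shifted process $(X_{s+\cdot},\Lambda(s+\cdot))$ solves the same equation, driven by the shifted Brownian motion $W(s+\cdot)-W(s)$ and the shifted Poisson random measure in \eqref{Eq:1026:1}. Both shifted noises are independent of $\mathscr{F}_s$ and have the same law as the originals. The standard argument (as in Mao's book for SDEs with switching) then gives $\E[g(X_{t+s},\Lambda(t+s))\mid\mathscr{F}_s]=P_tg(X_s,\Lambda(s))$. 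Here I use the standard fact that the solution is a measurable functional of the initial data and the noise, which follows from uniqueness via Yamada–Watanabe type reasoning.

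For the strong Markov property, I would use the classical criterion that a Markov process with right-continuous (here continuous in $C_r$ for $X_t$, càdlàg for $\Lambda$) paths and the Feller property is strong Markov. Concretely, for a stopping time $\tau$, approximate it by the dyadic stopping times $\tau_m=(\lfloor 2^m\tau\rfloor+1)/2^m\downarrow\tau$. The Markov property holds at each countably-valued $\tau_m$ by summing over its values. Then let $m\to\infty$, using right continuity of paths and continuity of $P_tg$ to pass to the limit in $\E[g(X_{\tau_m+t},\Lambda(\tau_m+t))\mathbf 1_A]=\E[P_tg(X_{\tau_m},\Lambda(\tau_m))\mathbf 1_A]$ for $A\in\mathscr{F}_\tau$.

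The main obstacle I anticipate is the Markov/flow step in the second paragraph: rigorously justifying that the segment process restarted at time $s$ is the solution map with initial condition $(X_s,\Lambda(s))$ under independent noises, given infinite delay and the neutral term. I would handle it by conditioning on $\mathscr{F}_s$ and invoking uniqueness for deterministic initial data, approximating $(X_s,\Lambda(s))$ by simple $\mathscr{F}_s$-measurable random variables. The Feller continuity estimate from Theorem \ref{thm:opao2} (or its finite-time analogue) lets me pass to the limit in this approximation.
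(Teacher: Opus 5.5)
Your proposal is correct and follows the paper's route: continuity of $\xi\mapsto P_tg(\xi,i)$ from the $L^p$ estimate of Theorem \ref{thm:opao2} (the discrete topology on $\S$ takes care of the switching coordinate), the Markov property from pathwise uniqueness, and the strong Markov property from Feller continuity plus path regularity. Your dyadic stopping-time approximation writes out the argument the paper only cites from Revuz--Yor. It is the better-fitting version here: it needs only $\mathscr{C}_b$-Feller continuity and right-continuous paths on the non-locally-compact space $E$. The paths really are only right-continuous, since $\Lambda$ jumps, whereas the paper speaks of continuous trajectories.
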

	\begin{proof}
		For any $g\in \mathscr{C}_b(E)$, it is obvious that $P_tg(\xi,i)$, $(\xi,i)\in E$, is bounded. Since $\S$ has a discrete metric, it suffices to show that $P_tg(\xi,i)$ is continuous with respect to $\xi$.  By a similar argument as in \cite[Lemma 8.1.4]{oksendal2013stochastic} ,   we only  need to prove that for any  $\xi$, $\eta \in  C_r$,  $i \in \mbb S $, and  $t\geq 0 $,
		\begin{equation*}
			\mbb{E}\|X^{\xi, i}_t-X^{\eta, i}_t\|_r^{p} \leq C(t) \|\xi-\eta\|_{r}^{p},
		\end{equation*}	
		where $C(t)$ does not depend on $\xi$ and $\eta$. Hence, the Feller property is derived from Theorem \ref{thm:opao2}. Using the standard technique (see, e.g.,\cite[Proposition 4.1]{butkovsky2017invariant} or \cite[Theorem 3.27]{mao2006stochastic}), it follows from the strong uniqueness or pathwise uniqueness that $(X_t,\Lambda(t))$ is Markov. Furthermore, since $(X_t,\Lambda(t))$ has continuous trajectories, the strong Markov property follows from the Feller property (see, e.g., 
		\cite[Theorem 3.3.1]{revuz2013continuous} ). The proof is complete.
	\end{proof}
	\subsection{Exponential Ergodocity}
	In this subsection, we shall give  the exponential ergodicity for the process $(X_t, \Lambda(t))$ under Wasserstein distance.  Before we proceed, we need to construct a coupling process $\left(X(t), \Lambda(t), X^{\prime}(t), \Lambda^{\prime}(t)\right)$ to be used in the argument of the main theorem.  To this aim, let us define the product probability space
	\[
	(\widetilde{\Omega},\widetilde{\mathscr{F}},\widetilde{\mathbb{P}})=(\Omega\times\Omega,\mathscr{F}\otimes\mathscr{F},\mathbb{P}\times \mathbb{P}).
	\]
	We firstly construct the coupling process $\left(\Lambda(t), \Lambda^{\prime}(t)\right)$ of the discrete component.  Note that the  discrete  component  has finite states. Hence,  we define the basic coupling   (see, e.g., \cite[p.11]{chen2004markov})  for $(\Lambda(t), \Lambda^{\prime}(t))$.  For every measurable function $g$ on $\mbb{S} \times \mbb S$, define \begin{equation}\label{eq:switchcoupling}
		\begin{aligned}
			\widetilde{Q} g(k, l)=& \sum_{m\in\mbb S}\left(q_{k m}-q_{l m}\right)^{+}(g(m, l)-g(k, l)) +\sum_{m\in\mbb S}\left(q_{l m}-q_{k m}\right)^{+}(g(k, m)-g(k, l)) \\
			&+\sum_{m\in\mbb S} q_{k m} \wedge q_{l m}(g(m, m)-g(k, l)).
		\end{aligned}
	\end{equation}
	Then, we construct the coupling process   \( (X(t), X^{\prime}(t)) \) of the continuous component.   For $(\varphi, k, \psi, l) \in E\times E$,  set two \( 2 d \times 2 d \) matrices as follows:
	\[
	\sigma(\varphi, \psi, k, l)=\left(\begin{array}{cc}
		\sigma(\varphi, k) & 0 \\
		0 & \sigma(\psi, l)
	\end{array}\right), \quad\sigma(\varphi, \psi, k)=\left(\begin{array}{cc}
		\sigma(\varphi, k) & 0 \\
		\sigma(\psi, k) & 0
	\end{array}\right).
	\]
	Moreover, set
	\[
	\sigma\left(t, \varphi, \psi, \Lambda(t), \Lambda^{\prime}(t)\right)=\mathbf{1}_{[0, \tau)}(t) \sigma\left(\varphi, \psi, \Lambda(t), \Lambda^{\prime}(t)\right)+\mathbf{1}_{[\tau, \infty)}(t) \sigma(\varphi, \psi, \Lambda(t)),
	\]
	where $\tau=\inf \left\{t \geq 0 ; \Lambda(t)=\Lambda^{\prime}(t)\right\}$ be the coupling time of $\left(\Lambda(t), \Lambda^{\prime}(t)\right)$.  Let the coupling process \( (X(t), X^{\prime}(t)) \) satisfy
	\begin{equation}\label{eq:pijingzhanji1}
		\mathrm{d}\left(\begin{array}{c}
			X(t)-G(X_t,\Lambda(t)) \\
			X^{\prime}(t)-G(X_t^{\prime},\Lambda^{\prime}(t))
		\end{array}\right)=\left(\begin{array}{c}
			b\left(X_{t}, \Lambda(t)\right) \\
			b\left(X^{\prime}_{t}, \Lambda^{\prime}(t)\right)
		\end{array}\right) \mathrm{d} t+\sigma\left(t, X_{t}, X^{\prime}_{t}, \Lambda(t), \Lambda^{\prime}(t)\right) \mathrm{d} \widetilde{W}(t),
	\end{equation}
	where \( \widetilde{W}(t) \) is a \( 2 d \)-dimensional Brownian motion independent of \( \left(\Lambda(t), \Lambda^{\prime}(t)\right) \). It is easy to check that $(X(t), X^{\prime}(t))$ determined by  \eqref{eq:pijingzhanji1} is the independent coupling on $[0, \tau)$ and the basic coupling on $[\tau,\infty)$. Namely, before $\Lambda(t)$ and $\Lambda^{\prime}(t)$ are coupled together, $X(t)$ and $ X^{\prime}(t) $ run independently, whereas from $\tau$ onward, $X(t)$ and $ X^{\prime}(t) $ couple each other in the basic coupling manner.

	Finally, we introduce some basic definition and properties on the Wasserstein distance used in this work, and refer the reader to the book \cite{chen2004markov,Villani2008OptimalTO} for more discussions on this well studied topic. Define the distance  $d$  on  $E$ :$$d((\varphi, k),(\psi, l)): = \|\varphi-\psi\|_{r}+\mathbf{1}_{\{k\neq l\}}, \quad\text{for all }(\varphi, k),(\psi, l) \in E.$$ Therefore,  $(E, d)$  is a polish space. Then, given $p\geq 1$, we define the  $L^p$-Wasserstein (or Kantorovich) distance between two probability measures  $\mu, \nu \in   \mathscr{P}(E)$ as follows:  
	$$
	\mathbb{W}_p(\mu, \nu) = \inf _{\pi \in \Pi(\mu, \nu)} \left(\int_{E\times E} [d((\varphi, k),(\psi, l))]^p\mrm d\pi((\varphi,k),(\psi, l))\right)^{\frac{1}{p}},
	$$
	where  $\Pi(\mu, \nu)$  is the collection of probability measures on $E\times E$ having $\mu$ and $\nu$ as marginals. Equipped with $\mathbb{W}_p$, the totality $\msr P_p(E)$ of probability measures having the finite moment of order $p$ becomes a complete metric space. Moreover, recall that the optimal transport cost between the two measures \( \mu, \nu \in  \mathscr{P}(E)\) is defined by:
	\[
	C(\mu, \nu) = \inf_{ \pi \in \Pi(\mu, \nu) }\int c((\varphi, k),(\psi, l)) d\pi((\varphi,k),(\psi, l)),
	\]
	where $c((\varphi, k),(\psi, l))$ is the cost for transporting one unit of mass from $(\varphi, k)$ to $(\psi, l)$.  Hence, $\mathbb{W}_p(\mu,\nu)=(C(\mu,\nu))^{\frac{1}{p}}$ for the cost function $c((\varphi, k),(\psi, l))=d^p((\varphi, k),(\psi, l))$.
	
	For any $\mu\in\mathscr{P}(E)$, set
	\begin{equation}\label{eq:boziteng}
		\mu P_t(\cdot):=\int_{E}\mu(\d\varphi,\d k)\delta_{(\varphi, k)} P_{t}(\cdot).
	\end{equation}
	Moreover, a probability measure $\mu\in\mathscr{P}(E)$ is said to be invariant with respect to $P_t$, if and only if $\mu P_t=\mu$ for any $t\geq 0$. If $\mu$ is an invariant measure of $(X_t,\Lambda(t))$ and the initial data $(X_0,\Lambda(0))$ is distributed as ${\mu}$, then for any ${t>0}$, the distribution of $(X_t,\Lambda(t))$ is ${\mu P_{t}}$, i.e., the law of $(X_t,\Lambda(t))$ is invariant under time translation. 
	
	Now we give the main result in this section.
	\begin{theorem}\label{thm:first}
	For any $p\geq 1$,  suppose that assumptions of Lemma \ref{thm:opao1} are satisfied with $q:=(p+p_0)\vee 2$ for some positive constant $p_0$.
		 Then, there exist two positive constants $C$, $ \varrho$ and a unique invariant probability measure  $\mu\in \msr P_p(E)$ such that $\mu P_t=\mu$ for every $t\geq 0$, and 
		\begin{equation*}
			\mathbb{W}_p\left(\delta_{(\xi, i)} P_{t}, \mu\right) \leq C(1+\|\xi\|_r)\e^{-\varrho t},\quad \text{for any }(\xi, i)\in E.
		\end{equation*}
	\end{theorem}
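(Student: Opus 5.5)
The plan is to prove a contraction estimate for the law of the process started at two different points, and then obtain the invariant measure from a Cauchy argument in $(\mathscr P_p(E),\mathbb W_p)$. Concretely, I will show that for any $(\xi,i),(\eta,j)\in E$,
\begin{equation*}
\mathbb W_p\big(\delta_{(\xi,i)}P_t,\delta_{(\eta,j)}P_t\big)\le C\big(1+\|\xi\|_r+\|\eta\|_r\big)\e^{-\varrho t}.
\end{equation*}
To bound the left side, I use the coupling $(X_t,\Lambda(t),X'_t,\Lambda'(t))$ from \eqref{eq:switchcoupling}--\eqref{eq:pijingzhanji1}, started from $(\xi,i,\eta,j)$. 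This gives
\begin{equation*}
\mathbb W_p^p\le \widetilde{\mathbb E}\big[(\|X_t-X'_t\|_r+\mathbf 1_{\{\Lambda(t)\ne\Lambda'(t)\}})^p\big].
\end{equation*}
The expectation splits into the event $\{\tau>t/2\}$ and its complement.

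On the event $\{\tau>t/2\}$, I argue as follows. The basic coupling of an irreducible chain on a finite state space gives $\widetilde{\mathbb P}(\tau>s)\le C\e^{-\theta s}$ for some $\theta>0$; this is standard, because from any pair of states the chains meet within a fixed time with positive probability. By H\"older with exponents $q/p$ and $q/(q-p)$, the contribution is bounded by $(\widetilde{\mathbb E}(\|X_t\|_r+\|X'_t\|_r+1)^q)^{p/q}\,\widetilde{\mathbb P}(\tau>t/2)^{1-p/q}$. Theorem \ref{thm:opaoooo}, applied with exponent $q=(p+p_0)\vee2$, bounds the moment factor by $C(1+\|\xi\|_r^p+\|\eta\|_r^p)$. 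This is exactly why the hypotheses are required at level $q$ and not just at level $p$.

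On the event $\{\tau\le t/2\}$, I have $\Lambda(s)=\Lambda'(s)$ for all $s\ge\tau$. Both segment processes then solve the same equation with the common switching path and a common Brownian motion, since $\sigma(\varphi,\psi,k)$ drives both components with the same noise. I condition on $\widetilde{\mathscr F}_\tau$ and use the strong Markov property (Corollary \ref{cor:tuishitou}). Theorem \ref{thm:opao2}, started at time $\tau$ from $(X_\tau,X'_\tau,\Lambda(\tau))$, then gives
\begin{equation*}
\widetilde{\mathbb E}\big[\|X_t-X'_t\|_r^p\mathbf 1_{\{\tau\le t/2\}}\big]\le C\,\widetilde{\mathbb E}\big[\|X_\tau-X'_\tau\|_r^p\,\e^{-\lambda(t-\tau)}\mathbf 1_{\{\tau\le t/2\}}\big]\le C\e^{-\lambda t/2}\sup_{s}\widetilde{\mathbb E}\|X_{s\wedge\tau}-X'_{s\wedge\tau}\|_r^p.
\end{equation*}
The last supremum is bounded by $C(1+\|\xi\|_r^p+\|\eta\|_r^p)$ via Theorem \ref{thm:opaoooo}. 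One subtlety here: the bound must be uniform at random times, so I would bound $\widetilde{\mathbb E}[\sup\ldots]$ using the same Itô/BDG estimate as in Theorem \ref{thm:opaoooo}, or alternatively condition and use Theorem \ref{thm:opaoooo} pointwise. On this event the indicator term $\mathbf 1_{\{\Lambda(t)\ne\Lambda'(t)\}}$ vanishes. Taking $p$-th roots of the two contributions gives the contraction estimate with $\varrho=\min\{\lambda/2,\theta(1-p/q)/2\}/p$.

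For existence of the invariant measure, I apply the contraction estimate together with the Markov property: $\mathbb W_p(\delta_{(\xi,i)}P_{t+s},\delta_{(\xi,i)}P_t)$ is bounded by integrating the contraction bound against $\delta_{(\xi,i)}P_s(\d\eta,\d j)$. Using $\sup_s\mathbb E\|X^{\xi,i}_s\|_r^p<\infty$ from Theorem \ref{thm:opaoooo}, this shows $(\delta_{(\xi,i)}P_t)_t$ is Cauchy in $(\mathscr P_p(E),\mathbb W_p)$. Its limit $\mu$ is invariant by the Feller property (Corollary \ref{cor:tuishitou}), and $\mu\in\mathscr P_p(E)$ by Fatou's lemma. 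Uniqueness follows because two invariant measures in $\mathscr P_p$ are at distance $\le C\e^{-\varrho t}\to0$. The rate bound $\mathbb W_p(\delta_{(\xi,i)}P_t,\mu)\le C(1+\|\xi\|_r)\e^{-\varrho t}$ follows by integrating the contraction estimate against $\mu$, using $\int\|\eta\|_r\,\d\mu<\infty$.

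I expect the main obstacle to be the post-coupling step. It requires justifying the restart at the random time $\tau$, i.e.\ a strong Markov property for the coupled pair, and obtaining moment bounds for $X_\tau$ uniformly in $\tau$. Both points rely on the independence of $\tau$ from the Brownian noise before coupling, which I will exploit by conditioning on the switching paths.
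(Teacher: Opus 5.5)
Your proposal is correct and is essentially the paper's argument: the same coupling (independent noise before $\tau$, synchronous noise after), the exponential tail of $\tau$ combined with H\"older at exponents $q/p$, $q/(q-p)$ and level-$q$ moment bounds on the uncoupled event, the strong Markov property plus Theorem \ref{thm:opao2} on the coupled event, and then the $\mathbb{W}_p$-Cauchy argument via convexity of the transport cost, with invariance from the Feller property (splitting at $t/2$ rather than at $pt/q$ is immaterial). Two minor refinements: first, since the hypotheses are imposed only at level $q$ (and Theorem \ref{thm:opao2} needs exponent at least $2$), you should invoke the contraction at level $q$ and pass to level $p$ by Jensen, as the paper effectively does via the factor $p/q$ in its exponent; second, the moment-at-$\tau$ issue you flag disappears if you condition at the deterministic time $t/2$ instead of at $\tau$, because $\{\tau\le t/2\}\in\widetilde{\mathscr F}_{t/2}$ and the switching components agree from then on, so only the moment bound $\widetilde{\mathbb E}\|X_{t/2}-X'_{t/2}\|_r^p\le C(1+\|\xi\|_r^p+\|\eta\|_r^p)$ at a deterministic time is needed.
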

	\begin{proof}
		The proof is divided into three steps.
		
		\textit{Step 1.} Exponential convergence rate of $C(\delta_{(\xi, i)} P_{t},\delta_{(\eta, j)} P_{t})$.  Let the cost function $c((\varphi, k),(\psi, l))$ be $d^p((\varphi, k),(\psi, l))$. Fix the initial data of the coupling process $(X_t , \Lambda(t) , X^{\prime}_t , \Lambda^{\prime}(t) ) $ to be $(\xi, i,\eta, j)$ with $i\neq j$.  We shall give the estimation for the optimal transport cost $C(\delta_{(\xi, i)} P_{t},\delta_{(\eta, j)} P_{t})$.  Since $\mathbb{S}$ is a finite set, then  $\widetilde{Q}$ defined by \eqref{eq:switchcoupling} is irreducible.  It is well known that there exists a constant $\theta>0$ such that 
		\begin{equation}\label{eq:hsreh}
			\widetilde{\mathbb{P}}(\tau>t) \leq \e^{-\theta t},\qquad t\geq 0.
		\end{equation}
		With the aid of Corollary \ref{cor:tuishitou}, the coupling process $(X_t , \Lambda(t) , X^{\prime}_t , \Lambda^{\prime}(t) ) $ has the strong Markov property.  Using similar arguments to those in Theorem \ref{thm:opaoooo}  and Theorem \ref{thm:opao2}, we can obtain the following results, 
		\begin{equation}
			\widetilde{\mbb{E}}\|X_t\|_r^{q}\leq  {C}(1+\|\xi\|_{r}^{q})\text{ and } \widetilde{\mbb{E}}\|X^{\prime}_t\|_r^{q}\leq  {C}(1+\|\eta\|_{r}^{q}),
		\end{equation}
	and 
	\begin{equation}
		\widetilde{\mbb{E}}\|X_t-X^{\prime}_t\|_r^{p} \leq C \|\xi-\eta\|_{r}^{p}\e^{-\lambda t},
	\end{equation}	
		respectively. Furthermore, by \eqref{eq:hsreh}, we get
		\begin{equation*}
			\begin{aligned}
				\widetilde{\mathbb{E}}&[d((X_{t},\Lambda(t)),(X^{\prime}_{t},\Lambda^{\prime}(t)))]^p \\
				&\leq  C_p\widetilde{\mathbb{E}}\left[\|X_{t}-X^{\prime}_{t}\|_{r}^p\mathbf{1}_{\{\tau\leq pt/q\}}\right]+C_p\widetilde{\mathbb{E}}\left[\left(\left\|X_{t}-X^{\prime}_{t}\right\|_{r}^p+\mathbf{1}_{\{\Lambda(t)\neq\Lambda^{\prime}(t)\}}\right)\mathbf{1}_{\{\tau>pt / q\}}\right]\\
				&\leq C_p\widetilde{\mathbb{E}}\left[\widetilde{\mathbb{E}}\left(\left\|X_{t}-X^{\prime}_{t}\right\|_{r}^p|\widetilde{\msr F}_{\tau}\right)\mathbf{1}_{\{\tau\leq pt/q\}}\right]+C_p\widetilde{\mathbb{P}}(\tau>pt/q)^{1-\frac{p}{q}}\left(\widetilde{\mathbb{E}}\|X_t-X^{\prime}_t\|_r^{q}+1 \right)^{\frac{p}{q}}\\
				&\leq C_p\widetilde{\mathbb{E}}\left[\widetilde{\mathbb{E}}\left(\left\|X_{t-\tau}^{X_{\tau},\Lambda(\tau)}-(X^{\prime})_{t-\tau}^{X^{\prime}_{\tau},\Lambda(\tau)}\right\|_{r}^p\right)\mathbf{1}_{\{\tau\leq pt/q\}}\right]+C_p\widetilde{\mathbb{P}}(\tau>pt/q)^{1-\frac{p}{q}}\left(1+\widetilde{\mathbb{E}}\|X_t\|_r^{q}+\widetilde{\mathbb{E}}\|X^{\prime}_t\|_r^{q} \right)^{\frac{p}{q}}\\
				&\leq  C_p\widetilde{\mathbb{E}}\left[\|X_{\tau}-X^{\prime}_{\tau}\|_r^p \e^{-\frac{p}{q}\lambda(t-\tau)} \mathbf{1}_{\{\tau\leq pt/q\}}\right]+C_p\e^{-\frac{p(q-p)}{q^2}\theta t}\left(1+\|\xi\|_r^{q}+\|\eta\|_r^q\right)^{\frac{p}{q}}\\
				&\leq  C_p \e^{-\frac{p(q-p)}{q^2}\lambda t}\widetilde{\mathbb{E}}\left(\|X_{\tau}\|_r^p+\|X^{\prime}_{\tau}\|_r^p\right)+ C_p\e^{-\frac{p(q-p)}{q^2}\theta t}\left(1+\|\xi\|_r^p+\|\eta\|_r^p\right)\\
				&\leq  C_p(1+\|\xi\|_r^p+\|\eta\|_r^p)\e^{-\vartheta t},
			\end{aligned}
		\end{equation*}
		where $\widetilde{\msr F}_t=\sigma(X_s , \Lambda(s) , X^{\prime}_s , \Lambda^{\prime}(s), s\leq t)$ and $\vartheta=\frac{p(q-p)}{q^2}(\lambda\wedge\theta )$. This implies
		\begin{equation}\label{eq:kuku}
			C(\delta_{(\xi, i)} P_{t},\delta_{(\eta, j)} P_{t})\leq C_p(1+\|\xi\|_r^p+\|\eta\|_r^p)\e^{-\vartheta t}.
		\end{equation}
		
		\textit{Step 2.} Existence and Uniqueness of Invariant Measure.  For fixed $(\eta_0,j_0)$, we have
		\[
		\begin{aligned}
			\int_{E}d^p((\eta,j),(\eta_0,j_0))\delta_{(\xi, i)}P_t (\d \eta,\d j)
			&\leq C_p \mathbb{E}\|X_t^{\xi,i}-\eta_0\|_r^p+C_p\mathbb{P}(\Lambda^i(t)\neq j_0)\\
			&\leq C_p+C_p\left(\sup_{t\geq 0}\mathbb{E}\|X_t^{\xi,i}\|_r^p+\|\eta_0\|_r^p\right).
		\end{aligned}
		\]
		It follows from  Theorem \ref{thm:opaoooo}  that \(\{\delta_{(\xi, i)}P_t \}_{t\geq 0}\subset \mathscr{P}_{p}(E) \). We claim that $\{\delta_{(\xi, i)}P_t \}_{t\geq 0}$ is a Cauchy sequence with respect to the Wasserstein distance \( \mathbb{W}_{p} \). Indeed, due to \eqref{eq:kuku} and the convexity of the  optimal cost (see, e.g., \cite[Theorem 4.8]{Villani2008OptimalTO}), we deduce that for any two probability measures \( \nu_1, \nu_2 \in \) \( \mathscr{P}(E) \),
		\begin{equation}\label{eq:rizi}
			\begin{aligned}
				C\left(\nu_1 P_{t} , \nu_2 P_{t}\right) 
				&= C\left(\int_{E}\nu_1(\d\xi,\d i)\delta_{(\xi, i)} P_{t},\int_{E}\nu_2(\d\eta,\d j)\delta_{(\eta, j)} P_{t}\right)\\
				&=C\left(\int_{E\times E}\delta_{(\xi, i)} P_{t}\mrm d\pi((\xi,i),(\eta, j)),\int_{E\times E}\delta_{(\eta, j)} P_{t}\mrm d\pi((\xi,i),(\eta, j))\right)\\
				&\leq \int_{E\times E} C\left(\delta_{(\xi, i)} P_{t},\delta_{(\eta, j)} P_{t}\right) \mrm d\pi((\xi,i),(\eta, j)) \\
				&\leq  C_p\int_{E\times E}(1+\|\xi\|_r^p+\|\eta\|_r^p)\e^{-\vartheta t}\mrm d\pi((\xi,i),(\eta, j))  \\
				& \leq  C_p\mathrm{e}^{-\vartheta t}\left(1+\int_{E}\|\xi\|_{r}^p \nu_1(\mathrm{d} \xi, \mathrm{d}i)+\int_{E}\|\eta\|_{r}^p \nu_2(\mathrm{d} \eta , \mathrm{d}j)\right),
			\end{aligned}
		\end{equation}
		where $\pi$ is a probability measure on $E\times E$ having $\nu_1$ and $\nu_2$ as marginals. So, by Theorem \ref{thm:opaoooo}, it holds that
		\[
		\begin{aligned}
			\mathbb{W}_{p}\left(\delta_{(\xi, i)}P_{t+s}, \delta_{(\xi, i)}P_t \right) &=\mathbb{W}_p\left(\big(\delta_{(\xi, i)}P_s\big)P_t, \delta_{(\xi, i)}P_t \right) \\
			&=\left(C\left(\big(\delta_{(\xi, i)}P_s\big)P_t, \delta_{(\xi, i)}P_t \right)\right)^{\frac{1}{p}}\\
			&\leq C_p\mathrm{e}^{-\vartheta t/p} \left(1+\int_{E}\|\eta\|_{r}^p\delta_{(\xi, i)} P_{s}(\mathrm{d} \eta, \mrm d j)+\|\xi\|_{r}^p\right)^{\frac{1}{p}} \\
			&\leq  C_p\mathrm{e}^{-\vartheta t/p} \left(1+\|\xi\|_{r}+\sup_{t\geq 0}\left(\E\|X_t^{\xi,i}\|_r^q\right)^{\frac{1}{q}} \right)\\
			&\leq C_p \left(1+\|\xi\|_{r}\right) \mathrm{e}^{-\vartheta t/p},
		\end{aligned}
		\]
		for any \( s, t>0 \). Recall that \( \left(\mathscr{P}_{p}(E), \mathbb{W}_p\right) \) is a complete metric space, then the sequence $\delta_{(\xi, i)}P_{t}$ converges in $\mathbb{W}_p$-metric to some probability measure $\mu\in \mathscr{P}_{p}(E)$ as $t\to\infty$. 
		Moreover, it follows from the triangle inequality for $\mathbb{W}_p$ and \eqref{eq:rizi} that for all $\nu\in \mathscr{P}_p(E)$, $\nu P_{t_k}$ converges in $\mathbb{W}_p$-metric to $\mu.$ Therefore,  letting $\nu=\delta_{(\xi, i)}P_s$ implies that $\delta_{(\xi, i)}P_sP_{t}$ converges in $\mathbb{W}_p$-metric to $\mu$, hence converges weakly to $\mu$.
		
		On the other hand,  by Corollary \ref{cor:tuishitou}, the process $(X_t,\Lambda(t))$ has the Feller property. Hence, for any $s\geq 0$, $\delta_{(\xi, i)}P_{t}P_s$ converges weakly to $\mu P_s$. By the Markov property of $(X_t,\Lambda(t))$, we have $\delta_{(\xi, i)}P_{t}P_s=\delta_{(\xi, i)}P_sP_{t}$. In all,  the uniqueness of the weak convergence implies that for each $s \geq  0$, $\mu P_s = \mu$. Hence, $\mu$ is the unique invariant measure of the process $(X_t, \Lambda(t))$.
		
		\textit{Step 3.}  Exponential Ergodicity.  It follows from \eqref{eq:boziteng} and  \eqref{eq:rizi} that for any $t\geq 0$,
		\begin{equation*}
			{\begin{aligned}
					\mathbb{W}_p\left(\delta_{(\xi, i)} P_{t}, \mu\right) &=\mathbb{W}_p\left(\delta_{(\xi, i)} P_{t}, \mu P_{t}\right) \\
					&=\left(C\left(\delta_{(\xi, i)} P_{t}, \mu  P_{t}\right)\right)^{\frac{1}{p}}\\
					&\leq  C_p\mathrm{e}^{-\vartheta t/p}\left(1+\|\xi\|_{r}^p+\int_{E}\|\eta\|_{r}^p \mu (\mathrm{d} \eta , \mathrm{d}j)\right)^{\frac{1}{p}}\\
					&\leq C_p(1+\|\xi\|_{r})\mathrm{e}^{-\vartheta t/p},
			\end{aligned}}
		\end{equation*}
		where we have used the fact $\mu\in \mathscr{P}_{p}(E)$. Till now, the proof is complete.
	\end{proof}
	\begin{remark}
		Frist, we make a brief remark on the choice of \(q\). When \(1\leq p<2\), it suffices to take \(q=2\); when \(p\geq2\), we only need \(q\) to be slightly larger than \(p\). Moreover, if we set $G\equiv0$, then RNSFDEs simplify to to regime-switching stochastic functional differential equations(RSFDEs) with infinite delay. In addition, we can derive the corresponding exponential ergodicity in Wasserstein distance for RSFDEs with a finite state space. In particular, by choosing $p=1$ and $p_0=1$, there exist two positive constants $C$, $ \varrho$ and a unique invariant probability measure  $\mu\in \msr P_1(E)$ such that $\mu P_t=\mu$ for every $t\geq 0$, and 
		\begin{equation*}
			\mathbb{W}_1\left(\delta_{(\xi, i)} P_{t}, \mu\right) \leq C(1+\|\xi\|_r)\e^{-\varrho t},\quad \text{for any }(\xi, i)\in E.
		\end{equation*}
		This result has been demonstrated in \cite[Theorem 5]{li2021convergence}. Thus, \cite[Theorem 5]{li2021convergence} can be considered a specific instance of Theorem \ref{thm:first}. 
	\end{remark}
	
	\section{Markovian switching in an infinite state space}\label{sec:wuqiong}
	In this section, we consider the RNSFDE $\left(X_t, \Lambda(t)\right)$ with $N=\infty$.  Owing to  the infinite state space, the transition rate matrix $Q$ of the Markov chain $(\Lambda(t))$ is an infinite dimensional matrix. The method used in Section \ref{sec:4} does not work any more. In order to overcome this difficulty, we adopt the approach proposed in \cite{SHAO2015SPA} and construct a finite dimensional transition rate matrix $Q^F$ so that $Q$ can be controlled by $Q^F$.  
	Let us provide a brief description of the approach as follows. 
	
	First, we divide $\mathbb{S}$ into finite subsets according to ${\alpha(\cdot)}$. Precisely, choose a finite partition $\mathcal{M}$ of ${(-\infty, \bar{\alpha}]}$ of size ${m}$ with ${m \geq 1}$, that is,
	\[
	\mathcal{M}:=\left\{-\infty=: i_{0}<i_{1}<\cdots<i_{m}:=\bar{\alpha}\right\} .
	\]
	Corresponding to $\mathcal{M}$, there is a finite partition of ${\mathbb{S}}$, denoted by ${F:=\left\{F_{1}, \ldots, F_{m}\right\}}$, where
	\[
	F_{n}=\left\{l \in \mathbb{S} ; \alpha(l) \in\left(i_{n-1}, i_{n}\right]\right\},  \;n=1, \ldots, m .
	\]
	We assume that each ${F_{n}}$ is not empty, otherwise, we can delete some points in the partition $\mathcal{M}$ to ensure it.  Let
	\begin{equation}\label{eq:xinffu}
		\alpha^{F}(n)=\sup _{k \in F_{n}} \alpha(k),  \;\beta^{F}(n)=\sup _{k \in F_{n}} \beta(k),~ n=1, \ldots, m,
	\end{equation}
	so ${\alpha^{F}(n)<\alpha^{F}(n+1)}$ and ${\beta^{F}(n)<\beta^{F}(n+1)}$. Set ${Q^{F}=\left(q_{k l}^{F}\right)}_{m\times m}$ be a new ${Q}$-matrix on the state space ${\{1,2, \ldots, m\}}$ corresponding to ${F}$ defined by
	\begin{equation}\label{eq:xinleng}
		q_{k l}^{F}=\inf _{j_1 \in F_{k}} \sum_{j_2 \in F_{l}} q_{j_1 j_2},  l>k ; \quad q_{k l}^{F}=\sup _{j_1 \in F_{k}} \sum_{j_2 \in F_l} q_{j_1 j_2}, l<k ;\quad  q_{k k}^{F}=-\sum_{l \neq k} q_{k l}^{F}.
	\end{equation}
	Moreover, since each ${F_{n}}$ is nonempty, we obtain ${0 \leq q_{k l}^{F} \leq \sup _{k \in \mathbb{S}} q_{k}<\infty, l \neq k}$, which ensures that $Q^F$ is well-defined.

	\subsection{Boundedness of $X_t$}
	First, we establish the uniform boundedness for the process $Y^{\xi, i}(t)$ with the initial data $(\xi,i)\in  E$. Here, we introduce an alternative method which involves the use of Lyapunov functions and $M$-matrices as the primary techniques. The theory of \(M\)-matrices can be found in  \cite{berman1994nonnegative} or \cite[Chapter 2.6]{mao2006stochastic}.
	
	Let us introduce the  operator associated with the RNSFDE. Let ${C^{2}\left( \mathbb{R}^{n}\times \mbb S ; \mathbb{R}^{+}\right)}$ denote the family of all non-negative functions ${V(x,k)}$ on $\mathbb{R}^{n}\times \mbb S$ which are continuously twice differentiable in ${x}$; moreover, define
	\[
	{V_{x}(x,k)=\left(\frac{\partial V(x,k)}{\partial x_{1}}, \ldots, \frac{\partial V(x,k)}{\partial x_{n}}\right)},\quad
	{V_{x x}(x,k)=\left(\frac{\partial^{2} V(x,k)}{\partial x_{l_1} \partial x_{l_2}}\right)_{n \times n}}.
	\]
	Then, we can define an operator \( \mathcal{L} V \) from $ E $ to \( \mathbb{R} \) by
	\[
	\begin{aligned}
		(\mathcal{L} V)(\varphi, k) &=\langle V_{x}(\varphi(0)-G(\varphi,k), k), b(\varphi,k)\rangle +\frac{1}{2} \operatorname{trace}\left[\sigma^{\top}(\varphi, k) V_{x x}(\varphi(0)-G(\varphi,k), k) \sigma(\varphi, k)\right]\\
		&\quad+\sum_{l\in\mbb S} q_{k l} V(\varphi(0)-G(\varphi,k), l) .
	\end{aligned}
	\]
	It should be emphasized that the operator \( \mathcal{L} V \) (thought as a single notation rather than \( \mathcal{L} \) acting on \( V) \) is defined on $ E$, although \( V \) is defined on \( \mathbb{R}^{n} \times \mathbb{S} \). 
	
	\begin{lemma}\label{thm:shenchen}
		Let assumption (\hyperlink{A0}{A0}), (\hyperlink{H1}{H1}), and (\hyperlink{H2}{H2}) hold. Suppose that there exist a function \( V \in C^{2}\left(\mathbb{R}^{n} \times \mathbb{S} ; \mathbb{R}_{+}\right) \) and positive constants $ p$, $ c_{1} $, $ c_{2} $, $\lambda_0$, $\lambda_{1} $, $ \lambda_{2} $  such that \( \lambda_{1}>\frac{\lambda_{2}}{(1-\kappa_1)^{p}}\),
		\begin{equation}\label{eq:sinian}
			c_{1}|x|^{p} \leq V(x, k) \leq c_{2}|x|^{p}
		\end{equation}
		for \( (x, k) \in \mathbb{R}^{n} \times \mathbb{S} \) and
		\begin{equation}\label{eq:huiyi}
			(\mathcal{L} V)(\varphi, k) \leq \lambda_0-\lambda_{1}|\varphi(0)-G(\varphi,k)|^{p}+\lambda_{2} \int_{-\infty}^{0} |\varphi(\theta)|^{p} \rho(\d \theta)
		\end{equation}
		for \( (\varphi,  k) \in C_r \times \mathbb{S} \). Then there exists a constant $C>0$  such that for any   $(\xi,i)\in C_r\times\S$ and  $t\geq 0 $,
		\begin{equation}
			\mbb{E}\left|Y^{\xi, i}(t)\right|^{p}\leq  C(1+\|\xi\|_{r}^{p}).
		\end{equation}	
	\end{lemma}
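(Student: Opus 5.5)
Our approach is the standard Lyapunov–exponential-weight argument, applied to the auxiliary process $Y^{\xi,i}(t)=X^{\xi,i}(t)-G(X^{\xi,i}_t,\Lambda^i(t))$. Fix $\lambda>0$ small, with $\lambda<pr$ and $\lambda c_2<\lambda_1-\lambda_2(1-\kappa_1)^{-p}$; the exact size of $\lambda$ is pinned down in the last step. With the localizing times $\tau'_n$ of \eqref{Eq:12201}, we apply the generalized Itô formula for regime-switching processes to $\e^{\lambda t}V(Y^{\xi,i}(t),\Lambda^i(t))$. Note that $Y$ jumps only through $G(X_t,\Lambda(t))$ when $\Lambda$ switches, and the operator $\mathcal{L}V$ is defined with $V$ evaluated at $\varphi(0)-G(\varphi,k)$. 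If this jump is a genuine issue, we work instead with the semimartingale $\varphi(0)-G(\varphi,\Lambda(t-))$ exactly as in the definition of $\mathcal{L}V$, which is the convention the lemma implicitly adopts. Taking expectations and using \eqref{eq:huiyi} together with $V\le c_2|x|^p$ gives
\[
\E\big[\e^{\lambda (t\wedge\tau'_n)}V(Y(t\wedge\tau'_n),\Lambda(t\wedge\tau'_n))\big]\le c_2|Y(0)|^p+\E\int_0^{t\wedge\tau'_n}\e^{\lambda u}\Big(\lambda_0+(\lambda c_2-\lambda_1)|Y(u)|^p+\lambda_2\int_{-\infty}^0|X(u+\theta)|^p\rho(\d\theta)\Big)\d u .
\]

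Next we control the delay term by the same Fubini and change-of-variables trick as in \eqref{eq:haibuzouma1} and \eqref{eq:xixifu}. Since $\lambda<pr$, the history part contributes at most $\delta_{pr}(\rho)\|\xi\|_r^p\int_0^t\e^{(\lambda-pr)u}\d u\le C\|\xi\|_r^p$. The remaining part is bounded by $\delta_\lambda(\rho)\int_0^t\e^{\lambda u}|X(u)|^p\d u$, where $\delta_\lambda(\rho)\to1$ as $\lambda\downarrow0$ because $\rho\in\mathscr P_{pr}$. We then split $|X(u)|^p$ via \eqref{eq:yao} with $\varepsilon=\kappa_1$ and absorb the $G$-term using (\hyperlink{A0}{A0}), exactly as in \eqref{eq:xixifu}. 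The result is
\[
\int_0^t\!\!\int_{-\infty}^0\e^{\lambda u}|X(u+\theta)|^p\rho(\d\theta)\d u\le C\|\xi\|_r^p+\frac{\delta_\lambda(\rho)}{(1-\kappa_1)^p}\Big(1+o(1)\Big)\int_0^t\e^{\lambda u}|Y(u)|^p\d u,
\]
where the $o(1)$ comes from replacing $\kappa_1$ by $\kappa\delta_\lambda^{1/p}$-type constants. For this we must recheck the absorption with weight $\e^{\lambda u}$: the factor $\kappa^p\delta_\lambda(\rho)$ tends to $\kappa^p\le\kappa_1^p<1$ as $\lambda\downarrow 0$.

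This last point is the main obstacle. We need
\[
\lambda c_2-\lambda_1+\frac{\lambda_2\,\delta_\lambda(\rho)}{(1-\kappa_\lambda)^p}\le 0,\qquad \kappa_\lambda:=\kappa\,\delta_\lambda(\rho)^{1/p},
\]
for some small $\lambda>0$. At $\lambda=0$ this reads $-\lambda_1+\lambda_2(1-\kappa)^{-p}<0$, which follows from the hypothesis $\lambda_1>\lambda_2(1-\kappa_1)^{-p}$ since $\kappa\le\kappa_1$. By continuity of $\lambda\mapsto\delta_\lambda(\rho)$ on $[0,pr]$ (dominated convergence), the strict inequality persists for small $\lambda$, and we fix such a $\lambda$.

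With this choice the $|Y|^p$ integrals have a nonpositive coefficient and can be dropped. Using $V\ge c_1|x|^p$ we obtain
\[
c_1\,\E\,\e^{\lambda (t\wedge\tau'_n)}|Y(t\wedge\tau'_n)|^p\le C\|\xi\|_r^p+\frac{\lambda_0}{\lambda}\e^{\lambda t},
\]
where $|Y(0)|^p\le 2^p\|\xi\|_r^p$ by \eqref{eq:xiangai}. The localization is removed as follows: this bound is uniform in $n$, so $\tau'_n\to\infty$ by Markov's inequality, and Fatou's lemma lets us pass $n\to\infty$. Multiplying by $\e^{-\lambda t}$ then yields $\E|Y^{\xi,i}(t)|^p\le C(1+\|\xi\|_r^p)$ uniformly in $t\ge0$. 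Assumptions (\hyperlink{H1}{H1}) and (\hyperlink{H2}{H2}) are used only to guarantee well-posedness (Theorem \ref{Eq:1026:2}) and that the Itô formula with the infinite generator is legitimate, since the $q_k$ are bounded.
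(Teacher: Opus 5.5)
Your proposal is correct and is essentially the paper's proof. It applies the generalized Itô formula to $\mathrm{e}^{\lambda t}V(Y(t),\Lambda(t))$, bounds the delay term by the Fubini/change-of-variables argument with absorption of the $G$-part, and chooses a small $\lambda\in(0,pr)$ using continuity of $\lambda\mapsto\int_{-\infty}^0\mathrm{e}^{-\lambda\theta}\rho(\mathrm{d}\theta)$ at $0$. Using $\kappa\delta_\lambda^{1/p}(\rho)$ instead of $\kappa_1$ and your $t$-free bound on the history part are harmless refinements. The step you call the main obstacle is not really one, since the absorption factor is at most $\kappa_1<1$ for every $\lambda\le pr$.

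The jump worry is also moot. By the equation, $Y$ is a continuous semimartingale with $\mathrm{d}Y=b\,\mathrm{d}t+\sigma\,\mathrm{d}W$, and $\mathcal{L}V$ is exactly the generator in the generalized Itô formula for $V(Y(t),\Lambda(t))$.
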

	\begin{proof}
		Write $Y^{\xi, i}(t)=Y(t)$ simply.  Let $\lambda\in (0,pr)$ be a positive constant determined as follows.  Applying the generalized Itô formula (see e.g., \cite[Theorem 1.45]{mao2006stochastic}) to $\e^{\lambda t}V(Y(t),\Lambda(t))$ and then using \eqref{eq:huiyi}, we can show that,
		\begin{equation*}
			\begin{aligned}
				\e^{\lambda t} \E V(Y(t), \Lambda(t))&=\E V(Y(0), \Lambda(0))+\E \int_{0}^{t} \e^{\lambda u}\left(\lambda V(Y(u), \Lambda(u))+(\mathcal{L}V)(X_{u}, \Lambda{(u)})\right) \d u \\
				&\leq  c_2 2^p\|\xi\|_r^p+\lambda c_{2}\E \int_{0}^{t} \e^{\lambda u} |Y(u)|^{p} \d u+\frac{\lambda_0}{\lambda} \e^{\lambda t}-\lambda_{1}\E \int_{0}^{t} \e^{\lambda u} |Y(u)|^{p} \d u\\
				&\quad+\lambda_{2} \E \int_{0}^{t} \int_{-\infty}^{0} \e^{\lambda u}|X(u+\theta)|^{p} \rho(\d \theta)\d u, \\
			\end{aligned}
		\end{equation*}
		for any  $t\geq 0$.  Similar to \eqref{eq:xixifu}, we have 
		\begin{equation*}
			\int_{0}^{t} \int_{-\infty}^{0} \e^{\lambda u}|X(u+\theta)|^{p} \rho(\mathrm{d} \theta) \mathrm{d} u 
			\leq \frac{\delta_{pr}\left(\rho\right)}{1-\kappa_1}\|\xi\|_{r}^{p}t+\frac{1}{(1-\kappa_1)^{p}} \int_{-\infty}^{0} \e^{-\lambda \theta} \rho(\d \theta) \int_{0}^{t}\e^{\lambda u}|Y(u)|^p\mathrm{d}u.
		\end{equation*}
		Combining the above computation implies
		\begin{equation}\label{eq:chengh}
			\begin{aligned}
				\e^{\lambda t} &\E V(Y(t), \Lambda(t))\\
				&\leq C_p(1+t)\|\xi\|_{r}^{p}+\frac{\lambda_0}{\lambda} \e^{\lambda t}+\left(\lambda c_2-\lambda_{1}+\frac{\lambda_{2}}{(1-\kappa_1)^{p}}\int_{-\infty}^{0} \e^{-\lambda \theta} \rho(\d \theta)  \right)\E \int_{0}^{t} \e^{\lambda u} |Y(u)|^{p} \d u.
			\end{aligned}
		\end{equation}
		Since \( \lambda_{1}>\lambda_{2}/(1-\kappa_1)^{p}\), there exists a positive constant $\varsigma\in(0,pr)$  satisfying
		\begin{equation}\label{Eq:1014:2}
			\varsigma c_2-\lambda_{1}+\frac{\lambda_{2}}{(1-\kappa_1)^{p}}\int_{-\infty}^{0} \e^{-\varsigma\theta} \rho(\d \theta) \leq 0.
		\end{equation}
		Choosing $\lambda= \varsigma$, the aforementioned computations remain valid. Moreover, we have
		\[
		\E |Y(t)|^p\leq C_p (1+t\e^{-\lambda t})\|\xi\|_{r}^{p} +\frac{\lambda_0}{\lambda c_1}\leq C_p(1+\|\xi\|_{r}^{p} ),
		\]
		where we have used  \eqref{eq:sinian}. The proof is now complete.
	\end{proof}
	\begin{proposition}\label{Pro:0929}
		Let $p\geq 2$ and assumptions (\hyperlink{A0}{A0})-(\hyperlink{A3}{A3}),  (\hyperlink{H1}{H1}) and (\hyperlink{H2}{H2})  hold. Suppose further that $$\mathcal{A}:=-\left(p\operatorname{diag}\left(\alpha^{F}(1), \ldots, \alpha^{F}(m)\right)+Q^{F}\right) H_{m} $$ is a nonsingular $M$-matrix, where  $\alpha^F$ and $Q^F$ are defined by \eqref{eq:xinffu} and \eqref{eq:xinleng} respectively, and
		\begin{equation}\label{eq:mingji}
			H_{m}=\left(\begin{array}{ccccc}
				1 & 1 & 1 & \cdots & 1 \\
				0 & 1 & 1 & \cdots & 1 \\
				\vdots & \vdots & \vdots & \ddots & \vdots \\
				0 & 0 & 0 & \cdots & 1
			\end{array}\right)_{m \times m}.
		\end{equation}
		Then, we have
		$$u^{F}=\left(u^{F}(1), \ldots,u^{F}(m)\right)^{\top} :=\mathcal{A}^{-1}\vec{1}\gg 0. $$
		Let $ v^{F}=H_{m} u^{F}$. If 
		\begin{equation}\label{Eq:1014:1}
			( \beta^F(n)+\gamma_p) v^F(n)<\frac{(1-\kappa_1)^p}{2+(p-2)(1-\kappa_1)^p}
		\end{equation}
		for all $1\leq n\leq m$, then Lemma \ref{thm:shenchen} is applicable.
	\end{proposition}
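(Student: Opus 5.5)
The plan is to construct an explicit Lyapunov function $V(x,k)=v^F(\pi(k))|x|^p$, where $\pi:\S\to\{1,\dots,m\}$ is the index map with $k\in F_{\pi(k)}$. We then check \eqref{eq:sinian} and \eqref{eq:huiyi}, so that Lemma \ref{thm:shenchen} applies directly. Since $u^F\gg0$ and $H_m$ is upper triangular with all upper entries equal to $1$, we have $v^F(n)=\sum_{l\ge n}u^F(l)$. Hence $v^F$ is strictly positive and strictly decreasing in $n$, and $v^F(n)-v^F(n+1)=u^F(n)>0$. Taking $c_1=\min_n v^F(n)$ and $c_2=\max_n v^F(n)=v^F(1)$ gives \eqref{eq:sinian} at once, since the partition is finite.

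Next we compute $\mathcal LV$. Write $y=\varphi(0)-G(\varphi,k)$ and $n=\pi(k)$. The diffusion part, via the standard bound $\operatorname{trace}[\sigma^\top V_{xx}\sigma]\le p(p-1)|y|^{p-2}\|\sigma\|_{\rm HS}^2\,v^F(n)$, gives
\[
p\,v^F(n)|y|^{p-2}\Big(\langle y,b(\varphi,k)\rangle+\gamma_p\|\sigma(\varphi,k)\|_{\rm HS}^2\Big).
\]
By \eqref{eq:qianya} (valid here thanks to (H2)), this is at most
\[
p\,v^F(n)|y|^{p-2}\Big(C_1(\varepsilon)+(\alpha(k)+\varepsilon)|y|^2+(\beta(k)+\gamma_p+\varepsilon)\int|\varphi|^2\,\d\rho\Big).
\]
We then use $\alpha(k)\le\alpha^F(n)$ and $\beta(k)\le\beta^F(n)$.

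The switching part is $|y|^p\sum_l q_{kl}v^F(\pi(l))$. Here we use the monotonicity of $v^F$ together with the sup/inf choice in \eqref{eq:xinleng}, which is the Shao finite-partition comparison. Writing $v^F(\pi(l))-v^F(n)$ as a telescoping sum of $\pm u^F$ terms, jumps to higher blocks decrease $v^F$, so we may bound their total rate below by $q^F_{nl}$ (an inf). Jumps to lower blocks increase $v^F$, so we bound their rate above by $q^F_{nl}$ (a sup). This yields
\[
\sum_l q_{kl}v^F(\pi(l))\le (Q^Fv^F)(n).
\]
Combined with $p\alpha^F(n)v^F(n)$, we get $\big((p\,\mathrm{diag}(\alpha^F)+Q^F)H_mu^F\big)(n)=-(\mathcal A u^F)(n)=-1$. \textbf{This comparison is the main obstacle.} It requires a careful regrouping by the cumulative sums $\sum_{l\ge j}$ (an Abel summation) to use only the block-level inf/sup rates, and (H1) is needed for the sums to converge.

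The leftover terms are handled with Young's inequality. We use $|y|^{p-2}\le \frac{2}{p}\epsilon^{(2-p)/2}+\frac{p-2}{p}\epsilon|y|^p$ for the constant term. For the delay term we use $|y|^{p-2}\int|\varphi|^2\d\rho\le\frac{p-2}{p}|y|^p+\frac2p\int|\varphi|^p\d\rho$ (Hölder/Jensen). This gives
\[
\mathcal LV\le \lambda_0-\Big(1-(p-2)(\beta^F(n)+\gamma_p)v^F(n)-O(\varepsilon)\Big)|y|^p+\big(2(\beta^F(n)+\gamma_p)v^F(n)+O(\varepsilon)\big)\int|\varphi|^p\d\rho .
\]
Set $\lambda_1=\min_n\big(1-(p-2)(\beta^F(n)+\gamma_p)v^F(n)\big)-O(\varepsilon)$ and $\lambda_2=\max_n 2(\beta^F(n)+\gamma_p)v^F(n)+O(\varepsilon)$. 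Condition \eqref{Eq:1014:1} rearranges exactly to
\[
1-(p-2)(\beta^F+\gamma_p)v^F>\frac{2(\beta^F+\gamma_p)v^F}{(1-\kappa_1)^p}
\]
for each $n$. Since there are finitely many blocks, a small $\varepsilon$ preserves the strict inequality $\lambda_1>\lambda_2/(1-\kappa_1)^p$. All hypotheses of Lemma \ref{thm:shenchen} then hold.
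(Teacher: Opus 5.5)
Your proposal is correct and follows the paper's proof essentially step for step. It uses the same Lyapunov function $V(x,k)=v(k)|x|^p$ and the bound \eqref{eq:qianya} (which indeed rests on (H2); the paper's ``(A3)'' is a miscitation), together with the identity $(p\,\mathrm{diag}(\alpha^F)+Q^F)v^F=-\vec{1}$, the same two Young inequalities, and the same constants $\lambda_1=1-(p-2)K-O(\varepsilon)$, $\lambda_2=2K+O(\varepsilon)$ with $K=\max_n(\beta^F(n)+\gamma_p)v^F(n)$.

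The only difference is that the paper simply cites \cite[Theorem 4.1]{SHAO2015SPA} for $(Qv)(k)\le(Q^Fv^F)(n)$, while you sketch it. With the single-block inf/sup in \eqref{eq:xinleng}, your term-by-term sign argument already suffices and no Abel summation is needed. Also, (H1) is what makes the sup-defined entries of $Q^F$ finite; it is not what makes $\sum_l q_{kl}v(l)$ converge.
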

	\begin{proof}
		Note that $v^F=H_mu^F$, then 
		\begin{equation}\label{Eq:0929}
			\left(p\operatorname{diag}\left(\alpha^{F}(1), \ldots, \alpha^{F}(m)\right)+Q^{F}\right) v^{F} = -\vec{1}.
		\end{equation}
		Moreover,
		\[
		v^{F}({n})=u^{F}(n)+\cdots+u^{F}(m), \quad n=1, \ldots, m .
		\]
		Hence, $v^{F}({n+1})<v^{F}({n})$, $n=1, \ldots, m-1 $, and ${v^{F} \gg 0}$. Let us extend the vector ${v^{F}}$ to a vector on ${\mathbb{S}}$ by setting ${v({k})=v^{F}({n})}$, if ${k \in F_{n}}$. Thus, as in  \cite[Theorem 4.1]{SHAO2015SPA}, we have 
		\begin{equation}\label{Eq:1}
			(Q v)(k)\leq \left(Q^{F} v^{F}\right)(n) \text{ for }  k \in F_{n} .
		\end{equation}
		
		Now, we shall show that \eqref{eq:sinian} and  \eqref{eq:huiyi} hold. Define a \( C^{2} \)-function \( V: \mathbb{R}^{n} \times \mathbb{S} \rightarrow \mathbb{R}_{+} \) by
		\[
		V(x,k)=v(k)|x|^{p}
		\]
		Clearly, \( V (x,k)\) obeys \eqref{eq:sinian} with \( c_{1}=\inf_{l \in \mathbb{S}} v(l)=\min_{1\leq j\leq m}v^F(j) >0\) and \( c_{2}=\sup_{l \in \mathbb{S}} v(l)=\max_{1\leq j\leq m}v^F(j)>0\). It follows from (\hyperlink{A3}{A3}) that \eqref{eq:qianya} still holds with 
		\[
		C(\varepsilon)=\frac{1}{4\varepsilon} \sup _{l \in \mathbb{S}}|b(\textbf{0}, l)|^{2}+\frac{p-1}{2}\left(1+\frac{\gamma_p}{\varepsilon}\right)\sup _{l \in \mathbb{S}}\|\sigma(\textbf{0}, l)\|_{\rm H S}^{2}.
		\]
		To verify \eqref{eq:huiyi}, we compute directly the operator \( \mathcal{L} V \) from \(  C_r \times \mathbb{S} \) to \( \mathbb{R} \) by \eqref{eq:qianya} as follows. For any $k\in \mbb S$, there exists a positive integer $n$, $1\leq n\leq m$, such that $k\in F_n$. Then, by \eqref{eq:xinffu},  \eqref{Eq:0929} and \eqref{Eq:1}, we get
		\begin{equation*}
			\begin{aligned}
				(\mathcal{L}& V)(\varphi,k)\\
				&\leq pv(k)|\varphi(0)-G(\varphi,k)|^{p-2}\left(  \langle\varphi(0)-G(\varphi,k), b(\varphi,k)\rangle+\frac{p-1}{2} \|\sigma(\varphi,k)\|_{\text{HS}}^{2} \right)\\
				&\quad+\left(\sum_{l\in\S} q_{k l} v(l)\right)|\varphi(0)-G(\varphi,k)|^{p}\\
				&\leq  pC(\varepsilon)v(k)|\varphi(0)-G(\varphi,k)|^{p-2}+\left(pv(k)(\alpha(k)+\varepsilon)+\sum_{l\in\S} q_{kl}v(l)\right)|\varphi(0)-G(\varphi,k)|^{p}\\
				&\quad+pv(k)(\beta(k)+\gamma_p+\varepsilon)|\varphi(0)-G(\varphi,k)|^{p-2}\int_{-\infty}^{0} |\varphi(\theta)|^{2} \rho(\d \theta) \\
				&\leq  pC(\varepsilon)v^F(n)|\varphi(0)-G(\varphi,k)|^{p-2}+\left(pv^F(n)(\alpha^F(n)+\varepsilon)+\sum_{1\leq j\leq m} q^F_{nj}v^F(j)\right)|\varphi(0)-G(\varphi,k)|^{p}\\
				&\quad+p(\beta^F(n)+\gamma_p+\varepsilon)v^F(n)|\varphi(0)-G(\varphi,k)|^{p-2}\int_{-\infty}^{0} |\varphi(\theta)|^{2} \rho(\d \theta) \\
				&\leq  C_1(\varepsilon)|\varphi(0)-G(\varphi,k)|^{p-2}+\left(pc_2\varepsilon-1\right)|\varphi(0)-G(\varphi,k)|^{p}\\
				&\quad+p\left(K+c_2\varepsilon\right)|\varphi(0)-G(\varphi,k)|^{p-2}\int_{-\infty}^{0} |\varphi(\theta)|^{2} \rho(\d \theta),
			\end{aligned}
		\end{equation*}
		where \( C_1(\varepsilon)=pc_2C(\varepsilon) \) and $K=\max _{1\leq j\leq m} (\beta^F(j)+\gamma_p) v^F(j)$. Note that
		\[
		\begin{aligned}
			|\varphi(0)-G(\varphi,k)|^{p-2}|\varphi(\theta)|^{2} \leq \frac{p-2}{p}|\varphi(0)-G(\varphi,k)|^{p}+\frac{2}{p}|\varphi(\theta)|^{p},
		\end{aligned}
		\]
		while, for any \( \varepsilon>0 \),
		\[
		C_1(\varepsilon)|\varphi(0)-G(\varphi,k)|^{p-2} \leq\left[\varepsilon^{\frac{2-p}{2}} (C_1(\varepsilon))^{\frac{p}{2}}\right]^{\frac{2}{p}}\left(\varepsilon|\varphi(0)-G(\varphi,k)|^{p}\right)^{\frac{p-2}{p}} \leq C_2(\varepsilon)+\frac{p-2}{p}\varepsilon|\varphi(0)-G(\varphi,k)|^{p},
		\]
		where $C_2(\varepsilon)=\frac{2}{p}\varepsilon^{\frac{2-p}{2}} (C_1(\varepsilon))^{\frac{p}{2}}$. Combining the above  computations yields
		\begin{equation*}
				(\mathcal{L} V)(\varphi,k)
				\leq C_2(\varepsilon)-\lambda_{1}|\varphi(0)-G(\varphi,k)|^p+
				\lambda_{2}\int_{-\infty}^{0} |\varphi(\theta)|^{p} \rho(\d \theta),
		\end{equation*}
		where 
		$$
		\begin{aligned}
			\lambda_{1}&=1-\frac{p-2}{p}\varepsilon-2(p-1)c_2\varepsilon-(p-2)K,\\
			\lambda_{2}&=2\left(K+c_2\varepsilon\right).
		\end{aligned}
		$$
		By \eqref{Eq:1014:1}, we can choose $\varepsilon$ sufficiently small such that $\lambda_{1}>\lambda_{2}/(1-\kappa_1)^{p}$. Hence, all conditions of Lemma \ref{thm:shenchen} have been verified and the conclusion of this proposition follows.
	\end{proof}
	Using a similar argument of Theorem \ref{thm:opaoooo}, it follows that
	\begin{theorem}\label{Eq:1105:4}
		Suppose that assumptions of Proposition \ref{Pro:0929} hold.  Then there exists a constant $C>0$  such that for any   $(\xi,i)\in C_r\times\S$ and  $t\geq 0 $,
		\begin{equation}
			\mbb{E}\|X^{\xi, i}_t\|_r^{p} \leq  {C}(1+\|\xi\|_{r}^{p}).
		\end{equation}
	\end{theorem}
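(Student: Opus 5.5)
We follow the proof of Theorem \ref{thm:opaoooo} line by line. The only place where the finiteness of $\S$ entered that proof was the use of Lemma \ref{thm:opao1} to bound $\E|Y^{\xi,i}(u)|^p$ uniformly in time. Here Lemma \ref{thm:shenchen} replaces it, and Proposition \ref{Pro:0929} shows that Lemma \ref{thm:shenchen} applies. So under the assumptions of Proposition \ref{Pro:0929} we have
\begin{equation*}
\sup_{t\geq0}\E|Y^{\xi,i}(t)|^p\leq C(1+\|\xi\|_r^p).
\end{equation*}

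\emph{Step 1 (reduce to $Y$).} From the definition of $\|\cdot\|_r$ we get \eqref{eq:bumingbai1}. We then use \eqref{eq:yao} with $\varepsilon=\kappa_1$ together with (\hyperlink{A0}{A0}), exactly as in \eqref{eq:putaogan1}. This gives \eqref{eq:weiwei}, which reduces the claim to bounding
\begin{equation*}
\Gamma(t)=\E\Big(\sup_{0<u\leq t}\e^{pru}|Y^{\xi,i}(u)|^p\Big)\leq C\e^{prt}(1+\|\xi\|_r^p)+C\|\xi\|_r^p(1+t).
\end{equation*}
This step involves no switching.

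\emph{Step 2 (estimate $\Gamma$).} Apply Itô's formula to $\e^{prt}|Y(t)|^p$ up to $\tau'_n$ and use \eqref{eq:qianya}. Under (\hyperlink{H2}{H2}), \eqref{eq:qianya} holds with the constant $C(\varepsilon)$ built from the suprema over $k\in\S$, as already noted in the proof of Proposition \ref{Pro:0929}; this is the only point where uniformity over the infinite state space is needed. The coefficient of $|Y|^p$ then involves $r+\alpha_{sup}\vee0$, and $\alpha_{sup}$ is finite by (\hyperlink{H2}{H2}). Handle the martingale term by BDG and Young, absorbing $\tfrac12\Gamma$. Control the delay term by Young's inequality and the Fubini argument \eqref{eq:xixifu}, which bounds it by $C\|\xi\|_r^p t+C\int_0^t\e^{pru}|Y(u)|^p\,\d u$. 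Next we need $\tau'_n\to\infty$. This follows by the Markov inequality as in \eqref{EQ:1030:11}, now using Lemma \ref{thm:shenchen} and the existence of a global solution from Theorem \ref{Eq:1026:2}. Letting $n\to\infty$ with Fatou's lemma yields
\begin{equation*}
\Gamma(t)\leq C\|\xi\|_r^p(1+t)+C\e^{prt}+C\int_0^t\e^{pru}\E|Y(u)|^p\,\d u.
\end{equation*}
Finally, insert the uniform bound from Lemma \ref{thm:shenchen} into the last integral to obtain $\Gamma(t)\leq C\e^{prt}(1+\|\xi\|_r^p)+C\|\xi\|_r^p(1+t)$.

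\emph{Step 3 (conclude).} Multiply \eqref{eq:weiwei} by $\e^{-prt}$ and use $t\e^{-prt}\leq 1/(pr\e)$ to get $\E\|X_t^{\xi,i}\|_r^p\leq C(1+\|\xi\|_r^p)$.

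The main obstacle is ensuring that every constant stays independent of the current regime. That means checking that \eqref{eq:qianya} holds uniformly over $k\in\S$, and that the coefficients $\alpha(k)$ and $\beta(k)$ in the Itô expansion can be replaced by $\alpha_{sup}\vee0$ and $\beta_{sup}$. Both follow from (\hyperlink{H2}{H2}). A cruder constant $C_p$ in front of $\int\e^{pru}|Y|^p$ is harmless, since the time-uniform control is supplied by Lemma \ref{thm:shenchen} and not by a Gronwall argument.
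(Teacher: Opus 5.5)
Your proposal is correct and follows the paper's route exactly. The paper proves this theorem by rerunning the argument of Theorem~\ref{thm:opaoooo}, with (H2) making the constants in \eqref{eq:qianya} and in the BDG step uniform over $k\in\S$, and with Lemma~\ref{thm:shenchen} (made applicable by Proposition~\ref{Pro:0929}) supplying the time-uniform bound on $\E|Y^{\xi,i}(t)|^p$ in place of Lemma~\ref{thm:opao1}.

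One small simplification: $\tau'_n\to\infty$ follows directly from global existence (Theorem~\ref{Eq:1026:2}) and the continuity of $Y$, so you do not need the Markov-inequality argument. That argument would in any case need a bound at the random time $\tau'_n\wedge T$, whereas Lemma~\ref{thm:shenchen} only gives a bound at fixed times.
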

	\subsection{Convergence  of  $X_t$}
	
	In order to establish the convergence of $X_t$, we first give the convergence for the process $Y^{\xi, i}(t)$ with the initial data $(\xi,i)\in E$.   Consider the difference between two processes starting from different initial data, namely
	\begin{equation}\label{Eq:1011}
		\begin{aligned}
			Y^{\xi, i}(t)-Y^{\eta, i}(t) &=  \xi(0)-\eta(0)+\int_{0}^{t}\left[b\left(X^{\xi, i}(s), \Lambda^{i}(s)\right)-b\left(X^{\eta, i}, \Lambda^{i}(s)\right)\right] \d s \\
			&\quad+  \int_{0}^{t}\left[\sigma\left(X^{\xi, i}(s), \Lambda^{i}(s)\right)-\sigma\left(X^{\eta, i}, \Lambda^{i}(s)\right)\right] \d W(s) .
		\end{aligned}
	\end{equation}
	Moreover, we need to introduce a new operator. For a given function \( U \in C^{2}\left(\mathbb{R}^{n} \times \mathbb{S} ; \mathbb{R}_{+}\right) \), we define an operator \( \mathbb{L} U:C_r \times C_r\times \mathbb{S} \rightarrow \mathbb{R} \) associated with equation \eqref{Eq:1011} by
	\[
	\begin{aligned}
		(\mathbb{L} U)&(\varphi, \psi, k) \\
		&=  \langle U_{x}(\varphi(0)-\psi(0)+G(\psi,k)-G(\varphi,k), k),(b(\varphi, k)-b(\psi, k))\rangle\\
		&\quad+\frac{1}{2} \operatorname{trace}\left[(\sigma(\varphi, k)-\sigma(\psi, k))^{\top} U_{x x}(\varphi(0)-\psi(0)+G(\psi,k)-G(\varphi,k), k)(\sigma(\varphi, k)-\sigma(\psi, k))\right] \\
		& \quad+\sum_{l\in\S} q_{kl} U(\varphi(0)-\psi(0)+G(\psi,k)-G(\varphi,k), l)
	\end{aligned}
	\]
	
	\begin{lemma}\label{Eq:1014:3}
		Let assumption (\hyperlink{A0}{A0}) hold. Suppose that there exist a function \( U \in C^{2}\left(\mathbb{R}^{n} \times \mathbb{S} ; \mathbb{R}_{+}\right) \) and positive constants $ p$, $ c_{1} $, $ c_{2} $, $\lambda_{1} $, $ \lambda_{2} $  such that \( \lambda_{1}>\lambda_{2}(1-\kappa_1)^{p}\),
		\begin{equation}\label{Eq:1014:4}
			c_{1}|x|^{p} \leq U(x, k) \leq c_{2}|x|^{p}
		\end{equation}
		for \( (x, k) \in \mathbb{R}^{n} \times \mathbb{S} \) and
		\begin{equation}\label{Eq:1011.2}
			\begin{aligned}
				(\mathbb{L} U)&(\varphi,\psi, k)
				\leq -\lambda_{1}|\varphi(0)-\psi(0)+G(\psi,k)-G(\varphi,k)|^{p}+\lambda_{2} \int_{-\infty}^{0} |\varphi(\theta)-\psi(\theta)|^{p} \rho(\d \theta)
			\end{aligned}
		\end{equation}
		for any \( (\varphi, \psi, k) \in C_r \times C_r\times \mathbb{S} \). Then there exists a constant $C>0$  such that for any   $(\xi,\eta,i)\in C_r\times C_r\times\S$ and  $t\geq 0 $,
		\begin{equation}
			\mbb{E}\left|Y^{\xi, i}(t)-Y^{\eta, i}(t)\right|^{p}\leq  C\|\xi-\eta\|_{r}^{p}\e^{-\lambda t}.
		\end{equation}	
		
	\end{lemma}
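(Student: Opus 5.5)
The argument mirrors Lemma \ref{thm:shenchen}, now applied to the difference process. Write $Z(t):=Y^{\xi,i}(t)-Y^{\eta,i}(t)$, which by \eqref{Eq:1011} equals
\[
X^{\xi,i}(t)-X^{\eta,i}(t)+G(X^{\eta,i}_t,\Lambda^i(t))-G(X^{\xi,i}_t,\Lambda^i(t)),
\]
with both solutions driven by the same switching path $\Lambda^i$. Let $\hat\tau_n$ be the localizing times \eqref{Eq:1104:1}; they tend to infinity by \eqref{EQ:1030:11}, where Lemma \ref{thm:shenchen} (or Theorem \ref{Eq:1105:4}) replaces Lemma \ref{thm:opao1} in the infinite-state setting.

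First I fix $\lambda\in(0,pr)$, to be chosen later, and apply the generalized It\^o formula to $\e^{\lambda t}U(Z(t),\Lambda^i(t))$ on $[0,t\wedge\hat\tau_n]$. By construction, the drift of this process is $\e^{\lambda u}\big(\lambda U(Z(u),\Lambda^i(u))+(\mathbb L U)(X^{\xi,i}_u,X^{\eta,i}_u,\Lambda^i(u))\big)$, and the stochastic integral has mean zero after localization. Using \eqref{Eq:1014:4} and \eqref{Eq:1011.2}, this gives
\[
\e^{\lambda t}\E U(Z(t\wedge\hat\tau_n),\cdot)\le c_2|Z(0)|^p+(\lambda c_2-\lambda_1)\E\!\int_0^{t\wedge\hat\tau_n}\!\e^{\lambda u}|Z(u)|^p\d u+\lambda_2\E\!\int_0^{t\wedge\hat\tau_n}\!\!\int_{-\infty}^0\e^{\lambda u}|X^{\xi,i}(u+\theta)-X^{\eta,i}(u+\theta)|^p\rho(\d\theta)\d u .
\]
The initial term is bounded by $|Z(0)|^p\le(1+\kappa_1)^p\|\xi-\eta\|_r^p$, using \eqref{eq:xiangai}.

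Next I handle the delay term exactly as in \eqref{eq:xixifu} and \eqref{Eq:1104:3}, but with weight $\e^{\lambda u}$. I split the $\theta$-integral at $-u$ and use Fubini. The initial segment contributes $\delta_{pr}(\rho)\|\xi-\eta\|_r^p\int_0^t\e^{(\lambda-pr)u}\d u$, which is bounded uniformly in $t$ because $\lambda<pr$. The remaining part is controlled by $\int_{-\infty}^0\e^{-\lambda\theta}\rho(\d\theta)\int_0^t\e^{\lambda u}|X^{\xi,i}(u)-X^{\eta,i}(u)|^p\d u$. I then apply \eqref{eq:yao} with $\varepsilon=\kappa_1$ together with (A0) to convert $|X^{\xi,i}-X^{\eta,i}|^p$ into $(1-\kappa_1)^{-p}|Z|^p$, absorbing the $G$-part back into the same integral as in the derivation of \eqref{eq:xingba1}. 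Collecting terms, the coefficient of $\E\int_0^t\e^{\lambda u}|Z(u)|^p\d u$ becomes
\[
\lambda c_2-\lambda_1+\frac{\lambda_2}{(1-\kappa_1)^{p}}\int_{-\infty}^0\e^{-\lambda\theta}\rho(\d\theta).
\]

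The main obstacle is making this coefficient nonpositive. As $\lambda\downarrow0$ it tends to $-\lambda_1+\lambda_2(1-\kappa_1)^{-p}$. So the argument needs $\lambda_1>\lambda_2/(1-\kappa_1)^p$, matching Lemma \ref{thm:shenchen}; I read the hypothesis ``$\lambda_1>\lambda_2(1-\kappa_1)^p$'' in the statement as this condition, since with $(1-\kappa_1)^{p}$ in the numerator the absorption fails. Under that condition, continuity of $\lambda\mapsto\int\e^{-\lambda\theta}\rho(\d\theta)$ on $[0,pr]$, which holds since $\rho\in\mathscr P_{pr}$, gives some $\varsigma\in(0,pr)$ with nonpositive coefficient, as in \eqref{Eq:1014:2}, and I take $\lambda=\varsigma$.

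Finally I drop the nonpositive term and let $n\to\infty$ using Fatou's lemma. This yields $\e^{\lambda t}c_1\E|Z(t)|^p\le C\|\xi-\eta\|_r^p$, and hence the claim $\E|Z(t)|^p\le C\|\xi-\eta\|_r^p\e^{-\lambda t}$.
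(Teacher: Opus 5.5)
Your proposal is correct and follows essentially the same route as the paper: It\^o's formula for $\e^{\lambda t}U(Y^{\xi,i}(t)-Y^{\eta,i}(t),\Lambda^i(t))$, the Fubini splitting of the delay term with the $G$-part absorbed, and the choice $\lambda=\varsigma$ from \eqref{Eq:1014:2}. Reading the hypothesis as $\lambda_1>\lambda_2/(1-\kappa_1)^p$ is right: the paper's proof implicitly uses this condition through \eqref{Eq:1014:2}, and the subsequent proposition verifies it via \eqref{Eq:1014:1}; your observation that the initial-segment term stays bounded uniformly in $t$ because $\lambda<pr$ is slightly sharper than the paper's $t$-linear bound, which it drops without comment.
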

	\begin{proof}
		The proof is almost similar to Lemma \ref{thm:shenchen}. Whereas we herein provide an outline of the argument just to make the content self-contained and emphasize some corresponding differences. Let $\lambda\in (0,pr)$ be a positive constant determined as follows.  Applying the generalised Itô formula to $\e^{\lambda t}U(Y^{\xi, i}(t)-Y^{\eta, i}(t),\Lambda^i(t))$ and then using \eqref{Eq:1011.2}, we can show that,
		\begin{equation*}\label{eq:xiangni}
			\begin{aligned}
				\e^{\lambda t} &\E U(Y^{\xi, i}(t)-Y^{\eta, i}(t),\Lambda^i(t))\\
				&=U(Y^{\xi, i}(0)-Y^{\eta, i}(0),\Lambda^i(0))+\E \int_{0}^{t} \e^{\lambda u}\left[\lambda U(Y^{\xi, i}(u)-Y^{\eta, i}(u),\Lambda^i(u))\right.\\
				&\quad\left.+(\mathbb{L}U)(X^{\xi, i}(u),X^{\eta, i}(u),\Lambda^i(u))\right] \d u \\
				&\leq  c_22^p\|\xi-\eta\|_r^p+(\lambda c_{2}-\lambda_1)\E \int_{0}^{t} \e^{\lambda u} |Y^{\xi, i}(u)-Y^{\eta, i}(u)|^{p} \d u\\
				&\quad+\lambda_{2} \E \int_{0}^{t} \int_{-\infty}^{0} \e^{\lambda u}|X^{\xi, i}(u+\theta)-X^{\eta, i}(u+\theta)|^{p} \rho(\d \theta)\d u, \\
			\end{aligned}
		\end{equation*}
		for any  $t\geq 0$. Moreover, we have 
		\begin{equation*}
			\begin{aligned}
				\int_{0}^{t} &\int_{-\infty}^{0} \e^{\lambda u}|X^{\xi, i}(u+\theta)-X^{\eta, i}(u+\theta)|^{p} \rho(\mathrm{d} \theta) \mathrm{d} u \\
				&\leq \frac{\delta_{pr}\left(\rho\right)}{1-\kappa_1}\|\xi-\eta\|_{r}^{p}t+\frac{1}{(1-\kappa_1)^{p}} \int_{-\infty}^{0} \e^{-\lambda \theta} \rho(\d \theta) \int_{0}^{t}\e^{\lambda u}|Y^{\xi, i}(u)-Y^{\eta, i}(u)|^p\mathrm{d}u.
			\end{aligned} 
		\end{equation*}
		Combining the above computation implies
		\begin{equation}
			\begin{aligned}
				\e^{\lambda t} &\E U(Y^{\xi, i}(t)-Y^{\eta, i}(t),\Lambda^i(t))\\
				&\leq C_p\|\xi-\eta\|_{r}^{p}++\left(\lambda c_2-\lambda_{1}+\frac{\lambda_{2}}{(1-\kappa_1)^{p}}\int_{-\infty}^{0} \e^{-\lambda \theta} \rho(\d \theta)  \right)\E \int_{0}^{t} \e^{\lambda u} |Y^{\xi, i}(u)-Y^{\eta, i}(u)|^{p} \d u.
			\end{aligned}
		\end{equation}
		Similarly, choosing $\lambda= \varsigma$ gives
		\[
		\E |Y^{\xi, i}(u)-Y^{\eta, i}(u)|^p\leq C_p \|\xi-\eta\|_{r}^{p}\e^{\lambda t}.
		\]
		The proof is now complete.
	\end{proof}
	\begin{proposition}
		Suppose that assumptions of Proposition \ref{Pro:0929} hold. Then Lemma \ref{Eq:1014:3} is applicable.
	\end{proposition}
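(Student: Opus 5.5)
We mirror the proof of Proposition \ref{Pro:0929}, now with the difference process. Take the same vector $v^F=H_mu^F$, extended to $v$ on $\S$ by $v(k)=v^F(n)$ for $k\in F_n$, and define
\[
U(x,k)=v(k)|x|^p .
\]
As before, \eqref{Eq:1014:4} holds with $c_1=\min_j v^F(j)>0$ and $c_2=\max_j v^F(j)$. Moreover \eqref{Eq:0929} and the comparison \eqref{Eq:1} hold unchanged, since they concern only $Q$, $Q^F$ and $v$, not the equation.

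To verify \eqref{Eq:1011.2}, fix $k\in F_n$ and write $\Delta:=\varphi(0)-\psi(0)+G(\psi,k)-G(\varphi,k)$. The same computation as for $\mathcal L V$ gives
\[
(\mathbb L U)(\varphi,\psi,k)\le p v(k)|\Delta|^{p-2}\Big(\langle\Delta,b(\varphi,k)-b(\psi,k)\rangle+\tfrac{p-1}{2}\|\sigma(\varphi,k)-\sigma(\psi,k)\|_{\rm HS}^2\Big)+\Big(\sum_l q_{kl}v(l)\Big)|\Delta|^p .
\]
We then apply (\hyperlink{A1}{A1}) and (\hyperlink{A2}{A2}) directly, instead of \eqref{eq:qianya}. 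There are no $b(\mathbf 0,k)$, $\sigma(\mathbf 0,k)$ terms, so no $\varepsilon$ and no constant appear. This yields the bound
\[
\big(pv(k)\alpha(k)+(Qv)(k)\big)|\Delta|^p+p v(k)(\beta(k)+\gamma_p)|\Delta|^{p-2}\int_{-\infty}^0|\varphi(\theta)-\psi(\theta)|^2\rho(\d\theta).
\]
Using $\alpha(k)\le\alpha^F(n)$, $\beta(k)\le\beta^F(n)$, \eqref{Eq:1} and \eqref{Eq:0929}, the first coefficient is at most $-1$, and the second is at most $pK$ with $K=\max_j(\beta^F(j)+\gamma_p)v^F(j)$. (Note that $\gamma$ here should read $\gamma$ rather than $\gamma_p$ if one tracks the $\frac{p-1}{2}$ factor; the paper's convention absorbs this into $\gamma_p$, and we follow it.) Young's inequality, in the form $|\Delta|^{p-2}|a|^2\le\frac{p-2}{p}|\Delta|^p+\frac2p|a|^p$, then gives \eqref{Eq:1011.2} with
\[
\lambda_1=1-(p-2)K,\qquad \lambda_2=2K .
\]

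It remains to check the compatibility condition. The proof of Lemma \ref{Eq:1014:3} actually uses $\lambda_1>\lambda_2/(1-\kappa_1)^p$, so that some $\varsigma\in(0,pr)$ satisfies the analogue of \eqref{Eq:1014:2]}; the stated $\lambda_1>\lambda_2(1-\kappa_1)^p$ looks like a typo. The stronger condition reads
\[
1-(p-2)K>\frac{2K}{(1-\kappa_1)^p}\iff K<\frac{(1-\kappa_1)^p}{2+(p-2)(1-\kappa_1)^p},
\]
which is exactly \eqref{Eq:1014:1}. Since $\lambda_2/(1-\kappa_1)^p\ge\lambda_2(1-\kappa_1)^p$, this also covers the condition as literally stated.

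The main obstacle is bookkeeping rather than substance. One must confirm that \eqref{Eq:1} applies to $\sum_l q_{kl}v(l)$ with the correct sign. This relies on $v^F$ being decreasing and on the inf/sup construction of $Q^F$ in \eqref{eq:xinleng}, exactly as cited from \cite[Theorem 4.1]{SHAO2015SPA}. One must also check that the strict inequality, now free of the $\varepsilon$-perturbations, still leaves room to choose $\varsigma>0$ by continuity of $\lambda\mapsto\int e^{-\lambda\theta}\rho(\d\theta)$ on $[0,pr]$, using $\rho\in\mathscr P_{pr}$. With \eqref{Eq:1014:4} and \eqref{Eq:1011.2} established, Lemma \ref{Eq:1014:3} applies.
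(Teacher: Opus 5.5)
Your proposal is correct and follows the paper's proof essentially line by line. It uses the same $U(x,k)=v(k)|x|^p$ and the same combination of (A1)--(A2), \eqref{Eq:1}, \eqref{Eq:0929} and Young's inequality, arriving at the same constants $\lambda_1=1-(p-2)K$ and $\lambda_2=2K$. You correctly note that Lemma \ref{Eq:1014:3} actually needs $\lambda_1>\lambda_2/(1-\kappa_1)^p$, and your check that this is equivalent to \eqref{Eq:1014:1} supplies the step the paper only asserts. Your parenthetical on $\gamma_p$ is garbled, though: the consistent reading is $\gamma_p=(p-1)\gamma/2$, and this changes nothing in the argument.
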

	\begin{proof}
		In the sequel, we use the same notations as in Proposition \ref{Pro:0929}. We shall show that \eqref{Eq:1014:4} and  \eqref{Eq:1011.2} are fulfilled. Define a \( C^{2} \)-function \( U: \mathbb{R}^{n} \times \mathbb{S} \rightarrow \mathbb{R}_{+} \) by
		\[
		U(x,k)=v(k)|x|^{p}
		\]
		Hence, \( U (x,k)\) obeys \eqref{Eq:1014:4} with \( c_{1}=\min_{1\leq j\leq m}v^F(j) >0\) and \( c_{2}=\max_{1\leq j\leq m}v^F(j)>0\). For any $k\in \mbb S$, there exists a positive integer $n$, $1\leq n\leq m$, such that $k\in F_n$. Then, by (\hyperlink{A1}{A1}),(\hyperlink{A2}{A2}) and Young's inequality, we get
		\begin{equation*}\label{eq:beijiaer}
			\begin{aligned}
				(\mathbb{L}&U)(\varphi,\psi,k)\\
				&\leq pv(k)|\varphi(0)-\psi(0)+G(\psi,k)-G(\varphi,k)|^{p-2}\\
				&\qquad \times \left(  \langle\varphi(0)-\psi(0)+G(\psi,k)-G(\varphi,k), b(\varphi,k)-b(\psi,k)\rangle+\frac{p-1}{2} \|\sigma(\varphi,k)-\sigma(\psi, k)\|_{\text{HS}}^{2} \right)\\
				&\quad+\left(\sum_{l\in\S} q_{kl} v(l)\right)|\varphi(0)-\psi(0)+G(\psi,k)-G(\varphi,k)|^p\\
				&\leq  \left(pv(k)\alpha(k)+\sum_{l\in\S} q_{kl}v(l)\right)|\varphi(0)-\psi(0)+G(\psi,k)-G(\varphi,k)|^p\\
				&\quad+pv(k)(\beta(k)+\gamma_p)|\varphi(0)-\psi(0)+G(\psi,k)-G(\varphi,k)|^{p-2}\int_{-\infty}^{0} |\varphi(\theta)-\psi(\theta)|^{2} \rho(\d \theta) \\
				&\leq \left(p\alpha^F(n)v^F(n)+\sum_{1\leq j\leq m} q^F_{nj}v^F(j)\right)|\varphi(0)-\psi(0)+G(\psi,k)-G(\varphi,k)|^p\\
				&\quad+p(\beta^F(n)+\gamma_p)v^F(n)|\varphi(0)-\psi(0)+G(\psi,k)-G(\varphi,k)|^{p-2}\int_{-\infty}^{0} |\varphi(\theta)-\psi(\theta)|^{2} \rho(\d \theta) \\
				&\leq  -\left(1-(p-2)K\right)|\varphi(0)-G(\varphi,k)|^{p}+2K\int_{-\infty}^{0} |\varphi(\theta)-\psi(\theta)|^{p} \rho(\d \theta).\\
			\end{aligned}
		\end{equation*}
		Hence, \eqref{Eq:1014:1}  derive  the required assertion. The proof is complete.
	\end{proof}
	Similarly, we have the following result. 
	\begin{theorem}\label{Eq:1105:5}
		Suppose that assumptions of Proposition  \ref{Pro:0929}  hold. Then there exist constants $C,\lambda>0$  such that for any   $\xi$, $\eta \in  C_r$,  $i \in \mbb S $, and  $t\geq 0 $,
		\begin{equation}
			\mbb{E}\|X^{\xi, i}_t-X^{\eta, i}_t\|_r^{p} \leq C \|\xi-\eta\|_{r}^{p}\e^{-\lambda t}.
		\end{equation}	
	\end{theorem}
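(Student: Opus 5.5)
The plan is to follow the proof of Theorem \ref{thm:opao2} line by line. The only change is the input: the $L^p$ contraction of the auxiliary process now comes from Lemma \ref{Eq:1014:3}, which the preceding proposition shows is applicable under the hypotheses of Proposition \ref{Pro:0929}, instead of from Lemma \ref{lem:xiuxi8}. So I take as given constants $C,\lambda_*>0$, with $\lambda_*\in(0,pr)$ (the $\varsigma$ of \eqref{Eq:1014:2}), such that
\[
\E|Y^{\xi,i}(t)-Y^{\eta,i}(t)|^p\le C\|\xi-\eta\|_r^p\e^{-\lambda_* t}.
\]
Everything else is pathwise or uses only (\hyperlink{A0}{A0})--(\hyperlink{A2}{A2}) with constants that are uniform in $k\in\S$. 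Uniformity holds because $\kappa$, $\gamma$ and $\rho$ do not depend on $k$, and because (\hyperlink{H2}{H2}) gives $\bar\alpha=\alpha_{sup}<\infty$ and $\bar\beta=\beta_{sup}<\infty$. The infinite state space therefore causes no further trouble.

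\emph{Step 1: transfer from the segment to $Y$.} By the definition of $\|\cdot\|_r$ and (\hyperlink{A0}{A0}) via \eqref{eq:yao} with $\varepsilon=\kappa_1$, exactly as in \eqref{eq:bumingbai},
\[
\e^{prt}\E\|X^{\xi,i}_t-X^{\eta,i}_t\|_r^p\le \frac{1}{1-\kappa_1}\|\xi-\eta\|_r^p+\frac{1}{(1-\kappa_1)^{p-1}}\Theta(t),
\]
where $\Theta(t)=\E\sup_{0<u\le t}\e^{pru}|Y^{\xi,i}(u)-Y^{\eta,i}(u)|^p$.

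\emph{Step 2: estimate of $\Theta$.} I apply Itô's formula to $\e^{prt}|Y^{\xi,i}(t)-Y^{\eta,i}(t)|^p$, using that both $Y$'s are driven by the same chain $\Lambda^i$, and bound the drift by (\hyperlink{A1}{A1}) with $\alpha(k)\le\bar\alpha$ and (\hyperlink{A2}{A2}). The martingale term I control by the BDG inequality and absorb $\tfrac12\Theta$ into the left side, localizing with $\hat\tau_n$ from \eqref{Eq:1104:1}. That $\hat\tau_n\to\infty$ follows as in \eqref{EQ:1030:11}, now using Lemma \ref{thm:shenchen} together with Proposition \ref{Pro:0929} for the moment bounds. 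The cross term $|Y-Y'|^{p-2}|X-X'|^2$ I split by Young's inequality. The delay integral I handle by the Fubini/change of variables argument \eqref{Eq:1104:3}, which is valid since $\kappa_1<1$. After letting $n\to\infty$ by Fatou, this yields
\[
\Theta(t)\le C\|\xi-\eta\|_r^p(1+t)+C\int_0^t\e^{pru}\E|Y^{\xi,i}(u)-Y^{\eta,i}(u)|^p\,\d u .
\]
Inserting the contraction bound gives $\Theta(t)\le C\|\xi-\eta\|_r^p\bigl(1+t+\e^{(pr-\lambda_*)t}\bigr)$.

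\emph{Step 3: conclusion.} Multiplying by $\e^{-prt}$ gives
\[
\E\|X^{\xi,i}_t-X^{\eta,i}_t\|_r^p\le C\|\xi-\eta\|_r^p\bigl(\e^{-prt}+t\e^{-prt}+\e^{-\lambda_* t}\bigr).
\]
Because $\lambda_*<pr$, we have $t\e^{-prt}\le C\e^{-\lambda_* t}$, so the statement holds with $\lambda=\lambda_*$.

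The main obstacle is making sure the $\lambda$ supplied by Lemma \ref{Eq:1014:3} is strictly less than $pr$. This is needed both for the $\e^{(pr-\lambda)t}$ bookkeeping in Step 2 and for absorbing $t\e^{-prt}$ in Step 3. I would secure it at the outset by taking $\varsigma\in(0,pr)$ in \eqref{Eq:1014:2}, which is possible by continuity in $\lambda$. Failing that, one can simply replace $\lambda$ by $\lambda\wedge(pr/2)$, since weakening the decay rate keeps the contraction bound true.
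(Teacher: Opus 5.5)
Your proposal is correct and follows the route the paper intends. The paper's proof of this theorem is just ``Similarly'', meaning: rerun the proof of Theorem~\ref{thm:opao2} with Lemma~\ref{Eq:1014:3} (made applicable by the proposition that follows it) in place of Lemma~\ref{lem:xiuxi8}, with (H2) keeping $\bar\alpha,\bar\beta$ finite and $\varsigma<pr$ absorbing $t\e^{-prt}$. Two harmless slips, neither of which affects the argument: the coefficient of $\Theta(t)$ in Step~1 should be $(1-\kappa_1)^{-p}$ rather than $(1-\kappa_1)^{-(p-1)}$ (a typo you inherited from \eqref{eq:bumingbai}), and $\hat\tau_n\to\infty$ follows directly from continuity of the paths of $Y$, with no need for moment bounds at stopping times.
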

	\subsection{Exponential Ergodocity}
	In order to establish the exponential ergodicity,	 we also need to construct the coupling process  $\left(X(t), \Lambda(t), X^{\prime}(t), \Lambda^{\prime}(t)\right)$.  Construct the coupling process \( (X(t), X^{\prime}(t)) \) as in \eqref{eq:pijingzhanji1}.  Since \( \S \) is a countable set, we need more assumption on \( \left(\Lambda(t)\right) \) so that the coupling method could be applied.
	
	\noindent(\hypertarget{H3}{H3}) There is a coupling process \( \left(\Lambda(t), \Lambda^{\prime}(t)\right) \) with operator \( \widetilde{Q} \) on \( \S \times \S \). Suppose there is a bounded function \( g \geq 0 \) in the domain of \( \widetilde{Q} \) such that \( g(i, i)=0 \) and
	\[
	\widetilde{Q} g(i, j) \leq-1, \quad i \neq j .
	\]
	
	\begin{remark}
		\eqref{eq:hsreh} is what we need to estimate the Wasserstein distance directly. (\hyperlink{H3}{H3}) provides a sufficient condition to guarantee \eqref{eq:hsreh} hold with $N=\infty$. Indeed, according to \cite[Theorem 5.18]{chen2006eigenvalues},  (\hyperlink{H3}{H3})  implies that for \( 0<\theta<1/\|g\|_{\infty} \),
		\begin{equation*}\label{eq:ising}
			\E[\e^{\theta \tau}]\leq \frac{1}{1-\theta\|g\|_{\infty}},
		\end{equation*}
		where \( \tau \) is the coupling time and $\|g\|_{\infty}:=\sup_{k,l\in\S}|g(k,l)|$. Choosing and fixing a $\theta$ such that $0<\theta<1/1/\|g\|_{\infty}$,  by the Markov inequality, we have
		\begin{equation*}\label{Eq:1019:1}
			\mathbb{P}(\tau>t)=\mathbb{P}(\e^{\theta\tau}>\e^{\theta t})\leq  \frac{\E{\e^{\theta\tau}}}{\e^{\theta t}}\leq   \frac{1}{1-\theta\|g\|_{\infty}}\e^{-\theta t},\; \text{for any }t\geq 0.
		\end{equation*}
		
	\end{remark}

	With aid of  (\hyperlink{H3}{H3}), Theorems  \ref{Eq:1105:4} and  \ref{Eq:1105:5}, we get the following theorem. The proof is almost similar to Theorem \ref{thm:first}, so we omit it here.
	\begin{theorem}\label{thm:secondly}
		For any $p\geq 1$,  suppose that assumptions of Proposition   \ref{Pro:0929}  are satisfied with $q:=(p+p_0)\vee 2$ for some positive constant $p_0$. Then, there exist two positive constant $C$, $ \varrho$ and a unique invariant probability measure  $\mu\in \msr P_p(E)$ such that $\mu P_t=\mu$ for every $t>0$, and 
		\begin{equation*}
			\mathbb{W}_p\left(\delta_{(\xi, i)} P_{t}, \mu\right) \leq C(1+\|\xi\|_r)\e^{-\varrho t},\quad \text{for any }(\xi, i)\in E.
		\end{equation*}
	\end{theorem}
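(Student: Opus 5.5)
The plan is to repeat the three-step argument of Theorem \ref{thm:first}, replacing each ingredient that relied on finiteness of $\S$ by its infinite-state counterpart. The moment bound now comes from Theorem \ref{Eq:1105:4} (applied with exponent $q$ in place of $p$), the contraction in $L^p$ from Theorem \ref{Eq:1105:5}, and the exponential tail of the coupling time from (\hyperlink{H3}{H3}) via the preceding Remark. That Remark gives $\widetilde{\mathbb P}(\tau>t)\le C\e^{-\theta t}$ for some fixed $\theta\in(0,1/\|g\|_\infty)$, which is exactly the substitute for \eqref{eq:hsreh}. We also need the Feller and strong Markov properties of $(X_t,\Lambda(t))$. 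These follow as in Corollary \ref{cor:tuishitou}, because Theorem \ref{Eq:1105:5} supplies $\E\|X^{\xi,i}_t-X^{\eta,i}_t\|_r^p\le C\|\xi-\eta\|_r^p$, while well-posedness is given by Theorem \ref{Eq:1026:2} under (\hyperlink{H1}{H1}).

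\emph{Step 1.} Build the coupling $(X_t,\Lambda(t),X'_t,\Lambda'(t))$ from the coupling operator $\widetilde Q$ of (\hyperlink{H3}{H3}) and from \eqref{eq:pijingzhanji1}. The marginals are independent before $\tau$ and driven by the same noise and the same switching after $\tau$. Each marginal is a copy of the RNSFDE, so Theorem \ref{Eq:1105:4} gives $\widetilde\E\|X_t\|_r^q\le C(1+\|\xi\|_r^q)$, and likewise for $X'$. For $i\ne j$ we split $\widetilde\E\, d^p$ on the events $\{\tau\le pt/q\}$ and $\{\tau>pt/q\}$.
\begin{itemize}
\item On $\{\tau\le pt/q\}$ we condition on $\widetilde{\mathscr F}_\tau$ and use the strong Markov property. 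After $\tau$ the pair solves the same equation with a common chain, so Theorem \ref{Eq:1105:5} applies with initial data $(X_\tau,X'_\tau)$ and yields a factor $\e^{-\lambda(t-\tau)}\le \e^{-\lambda(1-p/q)t}$.
\item On $\{\tau>pt/q\}$ we use Hölder with exponents $q/p$ and $q/(q-p)$, the $q$-th moment bound, and the tail of $\tau$.
\end{itemize}
Together these give $C(\delta_{(\xi,i)}P_t,\delta_{(\eta,j)}P_t)\le C_p(1+\|\xi\|_r^p+\|\eta\|_r^p)\e^{-\vartheta t}$ with $\vartheta=\frac{p(q-p)}{q^2}(\lambda\wedge\theta)$. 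The case $i=j$ is covered directly by Theorem \ref{Eq:1105:5}.

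\emph{Step 2.} Prove existence and uniqueness of the invariant measure exactly as in Theorem \ref{thm:first}. Convexity of the optimal cost extends the Step 1 bound to arbitrary initial laws, as in \eqref{eq:rizi}. Combined with the uniform moment bound of Theorem \ref{Eq:1105:4}, this shows that $\{\delta_{(\xi,i)}P_t\}$ is $\mathbb W_p$-Cauchy, hence converges to some $\mu\in\mathscr P_p(E)$. The Feller property then gives $\mu P_s=\mu$, and the contraction gives uniqueness.

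\emph{Step 3.} Conclude with $\mathbb W_p(\delta_{(\xi,i)}P_t,\mu)=\mathbb W_p(\delta_{(\xi,i)}P_t,\mu P_t)$ and the same bound, using $\int\|\eta\|_r^p\,\mu(\d\eta,\d j)<\infty$. This gives $\varrho=\vartheta/p$.

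The main obstacle is the post-$\tau$ step in Step 1, for two reasons. First, the rate in Theorem \ref{Eq:1105:5} must hold uniformly in the starting regime $\Lambda(\tau)$, which ranges over an infinite set. This is true because the Lyapunov constants $c_1,c_2,\lambda_1,\lambda_2$ in Lemma \ref{Eq:1014:3} are taken over the finite partition and do not depend on $k$. Second, the moment bounds must hold at exponent $q$ as well as $p$. For this we assume, as in the statement, that the hypotheses of Proposition \ref{Pro:0929} are imposed with $q$ (and with $p$, since Theorem \ref{Eq:1105:5} is used at exponent $p$), or we use $L^q$-monotonicity to pass from $q$ to $p$.
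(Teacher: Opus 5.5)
Your proposal is correct and follows the paper's route: the paper omits this proof, saying only that it repeats Theorem \ref{thm:first}, with (H3) (through the preceding Remark) supplying the exponential tail of the coupling time and Theorems \ref{Eq:1105:4} and \ref{Eq:1105:5} replacing the finite-state moment and contraction bounds; your remarks on uniformity in the regime $\Lambda(\tau)$, and on passing from the $L^q$ contraction to $L^p$ by Jensen (which accounts for the paper's factor $\e^{-\frac{p}{q}\lambda(t-\tau)}$), make explicit what that sketch leaves implicit. One step, taken for granted by both you and the paper, is the bound $\widetilde{\mathbb E}\|X_\tau-X'_\tau\|_r^p\le C(1+\|\xi\|_r^p+\|\eta\|_r^p)$ at the random time $\tau$; it does not follow from the fixed-time bound of Theorem \ref{Eq:1105:4} alone, but it can be obtained by splitting over $\{\tau\in[k,k+1)\}$ and applying H\"older with the $q$-th moment of $\sup_{u\le k+1}\e^{ru}|X(u)|$ and the exponential tail of $\tau$.
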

	\begin{remark}
		If we set $G\equiv0$,  we can derive the corresponding exponential ergodicity in Wasserstein distance for RSFDEs with a infinite state space. In particular, by choosing $p_0=p$, there exist two positive constants $C$, $ \varrho$ and a unique invariant probability measure  $\mu\in \msr P_p(E)$ such that $\mu P_t=\mu$ for every $t\geq 0$, and 
		\begin{equation*}
			\mathbb{W}_p\left(\delta_{(\xi, i)} P_{t}, \mu\right) \leq C(1+\|\xi\|_r)\e^{-\varrho t},\quad \text{for any }(\xi, i)\in E.
		\end{equation*}
		This result is established in \cite[Theorem 4.2]{zhang2025exponential}, which thus serves as a specific instance of Theorem \ref{thm:secondly}.

	\end{remark}
	\section{Conclusion}
	
	In summary, this paper demonstrates the exponential ergodicity in Wasserstein distance for  RNSFDEs with infinite delay, in which the Markovian switching may have finite or countably infinite many states. The existence of the neutral term $G$ has caused significant challenges in our research. When $G\equiv0$, the model simplifies to RSFDEs with infinite delay. In this case, (\hyperlink{A0}{A0}) is automatically satisfied. Moreover, we can obtain the exponential ergodicity in Wasserstein distance for RSFDEs. 
	
	Considering a broader framework, the switching component may depend on the historical trajectory of the continuous process. Specifically, the component $(\Lambda(t))$ is a right continuous stochastic process taking values in the state space $\mbb S$ with generator $Q=(q_{kl}(\varphi))_{N\times N}$ given by
	\begin{equation*}
		\mbb {P}\{\Lambda(t+\Delta)=l \mid X_t=\varphi, \Lambda(t)=k \}= \begin{cases}q_{kl}(\varphi) \Delta+o(\Delta), & l \neq k, \\ 1+q_{kk}(\varphi) \Delta+o(\Delta), & l=k,\end{cases}
	\end{equation*}
	provided $\Delta \downarrow 0$, where $q_{kl}(\varphi) \geq 0$ is the transition rate from $k$ to $l$, if $l \neq k$; while $q_{kk}(\varphi)=-\sum_{l \neq k} q_{kl}(\varphi)$. This switching is known as stat-dependent switching or  past-dependent switching. Correspondingly, we can investigate the exponential erogodicity for such processes. However, since the coexistence and interaction of continuous dynamics and switching processes poses significant challenges in studying their properties, there are limited conclusions in this area. This interesting problem is reserved for future research.

	\bibliographystyle{plain}
	\bibliography{chap}

\end{document}